\documentclass[a4paper,12pt]{amsart}
\usepackage[a4paper,hscale=0.7,vscale=0.75,centering]{geometry}
\usepackage{fullpage}
\usepackage{url}
\usepackage{bbm}
\usepackage{dsfont}

\usepackage{amsthm,amsmath,amsfonts,stmaryrd,bbm,hyperref,geometry,color,amssymb}

\newtheorem{theorem}{Theorem}[section]

\newtheorem{corollary}[theorem]{Corollary}
\newtheorem{proposition}[theorem]{Proposition}

\theoremstyle{definition}

\newtheorem{example}[theorem]{Example}

\newtheorem{assumption}{Assumption}

\newcommand{\po}{\left(}
\newcommand{\pf}{\right)}

\newcommand{\R}{\mathbb R}

\newcommand{\dd}{\text{d}}

\theoremstyle{remark}
\newtheorem{remark}[theorem]{Remark}

\numberwithin{equation}{section}

\title{$L^2$-Wasserstein contraction for Euler schemes of elliptic diffusions and interacting particle systems}

\author{Linshan Liu}
\address{School of Mathematical and Computer Sciences, Heriot-Watt University, Edinburgh, EH14 4AS, UK}
\email{ll2018@hw.ac.uk}

\author{Mateusz B. Majka}
\address{School of Mathematical and Computer Sciences, Heriot-Watt University, Edinburgh, EH14 4AS, UK}
\email{m.majka@hw.ac.uk}

\author{Pierre Monmarch\'{e}}
\address{Laboratoire Jacques-Louis Lions and Laboratoire de Chimie Th\'eorique, Sorbonne Universit\'{e}, Paris, 4 place Jussieu
75005, France}
\email{pierre.monmarche@sorbonne-universite.fr}

\begin{document}
	
	\begin{abstract}
		We show the $L^2$-Wasserstein contraction for the transition kernel of a discretised diffusion process, under a contractivity at infinity condition on the drift and a sufficiently high diffusivity requirement. This extends recent results that, under similar assumptions on the drift but without the diffusivity restrictions, showed the $L^1$-Wasserstein contraction, or $L^p$-Wasserstein bounds for $p > 1$ that were, however, not true contractions. We explain how showing the true $L^2$-Wasserstein contraction is crucial for obtaining the local Poincar\'{e} inequality for the transition kernel of the Euler scheme of a diffusion. Moreover, we discuss other consequences of our contraction results, such as concentration inequalities and convergence rates in KL-divergence and total variation. We also study the corresponding $L^2$-Wasserstein contraction for discretisations of interacting diffusions. As a particular application, this allows us to analyse the behaviour of particle systems that can be used to approximate a class of McKean-Vlasov SDEs that were recently studied in the mean-field optimization literature.
	\end{abstract}

	\maketitle
 \section{Introduction}
Let $Q(x, dy)$ be a Markov transition kernel on $\mathbb{R}^d$. We say that $Q$ is contractive with respect to the $L^p$-Wasserstein distance $\mathcal{W}_p$ (whose definition is recalled in \eqref{eq:defWasserstein} below), if there exists a constant $\theta \in (0,1)$ such that for any probability measures $\mu$ and $\nu$ on $\mathbb{R}^d$,
\begin{equation}\label{eq:contraction}
\mathcal{W}_p(\mu Q, \nu Q) \leq (1-\theta) \mathcal{W}_p(\mu , \nu) \,.
\end{equation}
 Contractions of this type have been intensively studied in recent years, see e.g.\ \cite{EberleMajka2019,QinHobert2022,RudolfSchweizer2018,JoulinOllivier2010,Ollivier2009,HuangMajkaWang2022,MajkaMijatovicSzpruch2020} and the references therein. The particular focus has been on the case of $p=1$, which was originally considered by Dobrushin in \cite{Dobrushin1970}, and recently received increased attention due to its connection with the notion of the Ricci curvature \cite{Ollivier2009} and concentration and error bounds for Markov Chain Monte Carlo algorithms \cite{JoulinOllivier2010}.
 
 In particular, condition \eqref{eq:contraction} with $Q = P_t$ for $t \geq 0$ has been studied in detail in the case where $P_t$ is the transition kernel of a $d$-dimensional diffusion process 
 \begin{equation}\label{eq:diffusion}
 dX_t = b(X_t)dt + \sqrt{2T} dW_t \,,
 \end{equation}
 where $b: \mathbb{R}^d \to \mathbb{R}^d$ is the drift, $(W_t)_{t \geq 0}$ is a standard Brownian motion in $\mathbb{R}^d$, and $T>0$ is the diffusivity parameter. 
 It is known (see \cite[Corollary 1.4]{vonRenesseSturm2005} in the reversible case, \cite{Pierre2} in the general case) that for any $p \geq 1$, the strict exponential contractivity of $P_t$, in the sense that $\mathcal{W}_p(\mu P_t, \nu P_t) \leq e^{-ct} \mathcal{W}_p(\mu,\nu)$ holds for all $t \geq 0$ and all $\mu$, $\nu \in \mathcal{P}(\mathbb{R}^d)$, with some constant $c>0$, is equivalent to the contractivity condition on the drift with the same constant $c>0$, in the sense that
 \begin{equation}\label{eq:driftContractivity}
 \langle b(x) - b(y) , x -y \rangle \leq -c |x-y|^2
 \end{equation}
 holds for all $x$, $y \in \mathbb{R}^d$. However, in the case of $p=1$, even for non-reversible diffusions, by utilising the coupling approach, it has been possible to obtain a weak exponential contractivity condition $\mathcal{W}_1(\mu P_t, \nu P_t) \leq Me^{-ct} \mathcal{W}_1(\mu , \nu)$ with constant $M>1$ and $c>0$, under a relaxed condition on the drift where \eqref{eq:driftContractivity} is required to hold only for $x$, $y \in \mathbb{R}^d$ such that $|x-y| > R$, for an arbitrarily large $R>0$, see \cite{Eberle2016,EberleGuillinZimmer2019}. Subsequently, the coupling approach was also used to obtain exponential upper bounds for $p > 1$ that are, however, not true contractions, in the sense that $\mathcal{W}_p(\mu P_t, \nu P_t) \leq Me^{-ct} f(\mu,\nu)$, where $f$ is a function of $\mu$ and $\nu$ that cannot be controlled from above by $\mathcal{W}_p(\mu , \nu)$, see \cite{LuoWang2016}. In \cite{moiCourbureTemperature}, the third author of the present paper  obtained true $L^p$-Wasserstein contractions for $p>1$ under similar assumptions on the drift, and an additional assumption of sufficiently large value of $T$ in \eqref{eq:diffusion}. As discussed in \cite{moiCourbureTemperature}, out of all $p>1$, of particular interest is the case $p=2$, due to its connection with the Poincar\'{e} inequality and the KL-divergence (see also Sections \ref{sec: proof_local_Poincare} and \ref{sec:EntropyRegularization} in the present paper).
 
 Another class of processes for which Wasserstein contractions have been studied in detail, are discretised versions of diffusions and other stochastic differential equations \cite{EberleMajka2019, HuangMajkaWang2022, MajkaMijatovicSzpruch2020}. Namely, let $Q$ be the transition kernel of a Markov chain in $\mathbb{R}^d$ defined recursively by
 \begin{equation}\label{eq:chain}
 X_{k+1} = X_k + \delta b(X_k) + \sqrt{2T\delta} Z_{k+1} \,,
 \end{equation}
 where $\delta > 0$ is the discretisation parameter and $(Z_k)_{k=1}^{\infty}$ are i.i.d.\ random variables with the standard normal distribution. 
 In this setting, $L^1$-Wasserstein contractions of the form $\mathcal{W}_{1}(\mu Q^n, \nu Q^n) \leq M(1-c)^n \mathcal{W}_{1}(\mu , \nu)$ for any $n \geq 1$, with constant $M >1$, $c>0$, have been obtained in \cite{EberleMajka2019} under the contractivity at infinity condition on the drift (i.e., with \eqref{eq:driftContractivity} holding for $|x-y|>R$), whereas upper bounds on $L^2$-Wasserstein distances that are not true contractions were considered in \cite{MajkaMijatovicSzpruch2020}. Most recently, \cite{HuangMajkaWang2022} extended those results to other $p > 1$ and other types of noise in \eqref{eq:chain}, not necessarily Gaussian.
 
 The goal of the present paper is to adapt the techniques from \cite{moiCourbureTemperature} to the setting of \eqref{eq:chain}, in order to go beyond the results from \cite{EberleMajka2019, MajkaMijatovicSzpruch2020, HuangMajkaWang2022} and obtain true $L^2$-Wasserstein contractions, under a contractivity at infinity assumption on the drift in \eqref{eq:chain} and an additional requirement of sufficiently high value of $T$. We will also discuss several consequences of such results, including the local Poincar\'{e} inequality for $Q^n$ for all $n \geq 1$ (which, remarkably, can be obtained with a constant independent of the discretisation parameter $\delta$), concentration inequalities, entropy-cost regularization inequalities and bounds in $\operatorname{KL}$-divergence and total variation. Additionally, in contrast to \cite{EberleMajka2019, MajkaMijatovicSzpruch2020, HuangMajkaWang2022}, we will also study in detail contractions of discretised interacting diffusions. More precisely, we will consider a system of $N$ chains
 \begin{equation}\label{eq:particleSystem}
 X^i_{k+1} = X^i_k + \delta F(X^i_k) +\delta G_i(\textbf{X}_k)+ \sqrt{2\delta T} Z^i_k \,,
 \end{equation}
where the drift consists of the confinement component $F: \mathbb{R}^d \to \mathbb{R}^d$, and the interaction components $G_i: \mathbb{R}^{Nd} \to \mathbb{R}^d$ for $i \in \{ 1, \ldots , N \}$, $\delta > 0$ is the discretisation parameter, $T>0$ is the diffusivity parameter,  $(Z^i_k)_{i \in \{ 1, \ldots, N \}, k \geq 1}$ are i.i.d.\ standard normal random variables in $\mathbb{R}^d$ and we denote $\textbf{X}_k=(X_k^1,\dots,X_k^N)$.

\begin{example}
One of our motivations for studying such systems is their connection to a class of McKean-Vlasov SDEs that have recently been studied in the context of mean-field optimization \cite{HuRenSiskaSzpruch2021, Nitanda2022, Chizat2022, Suzuki2023} and the mean-field analysis of two-player zero-sum games \cite{Domingo-Enrich2020, YulongLu2023}. In particular, \cite{Domingo-Enrich2020} considered the problem of finding mixed Nash equilibria of games with entropy-regularised payoff functions $\mathcal{L}(\mu,\nu) := \int \int \ell(x,y) \mu(dx) \nu(dy) + \beta^{-1}(H(\nu) - H(\mu))$, for some integrable function $\ell: \mathbb{R}^d \times \mathbb{R}^d \to \mathbb{R}$, where $\beta > 0$ is the regularisation parameter and $H$ is the entropy, by utilising a gradient flow $(\mu_x, \mu_y) = (\mu_x(t), \mu_y(t))_{t \geq 0}$ of probability measures on $\mathbb{R}^{2d}$ given by
\begin{equation*}
\begin{cases}
\partial_t \mu_x &= \nabla_x \cdot \left( \mu_x \nabla_x \int \ell(x,y) \mu_y(dy) \right) + \beta^{-1} \Delta_x \mu_x \\
\partial_t \mu_y &= -\nabla_y \cdot \left( \mu_y \nabla_y \int \ell(x,y) \mu_x(dx) \right) + \beta^{-1} \Delta_y \mu_y 
\end{cases} \,.
\end{equation*}
Note that this corresponds (by taking $\mu_x = \mu_x(t,x)$ as the density of $X_t$ and $\mu_y = \mu_y(t,y)$ as the density of $Y_t$, respectively) to the $2d$-dimensional McKean-Vlasov SDE
\begin{equation*}
\begin{cases}
dX_t = - \int \ell (X_t,y) \operatorname{Law}(Y_t)(dy) dt + \sqrt{2}\beta^{-1} dW_t^1 \\
dY_t = \int \ell (x,Y_t) \operatorname{Law}(X_t)(dy) dt + \sqrt{2}\beta^{-1} dW_t^2 \,,
\end{cases}
\end{equation*}
where $(W_t^1,W_t^2)_{t \geq 0}$ is the standard Brownian motion in $\mathbb{R}^{2d}$.
As shown in \cite{Domingo-Enrich2020}, this flow can be approximated by a system of $2N$ particles given by \eqref{eq:particleSystem}, with $T=\beta^{-2}$, $F= 0$ and for $x = (x_1, \ldots, x_N)$, $y = (y_{N+1}, \ldots, y_{2N})$ we have $G_i(x,y) =-  \frac{1}{N} \sum_{j=N+1}^{2N} \ell(x_i,y_j)$ for $i \in \{ 1, \ldots, N \}$ and $G_i(x,y) =\frac{1}{N} \sum_{j=1}^{N}\ell(x_j,y_i)$  for $i \in \{ N+1, \ldots, 2N \}$, see (4) in \cite{Domingo-Enrich2020}. If we regularise the payoff function $\mathcal{L}$ by $\operatorname{KL}(\cdot|\pi)$ with $\pi \propto e^{-U}$ instead of $H$ (see \cite{HuRenSiskaSzpruch2021,RenWang2022,LiuMajkaSzpruch2023}), we obtain an additional drift term $F = -\nabla U$. Under appropriate assumptions on $U$, this falls into the framework of our results for discretised interacting particle systems (see Theorem \ref{thm:rhoContractionForSystem} below), and allows us to show an $L^2$-Wasserstein contraction for sufficiently small $\beta$ (large $T$).
\end{example}

The remaining part of the paper is organised as follows. In Section \ref{section:main} we formulate our main results: a contraction for the Euler scheme of a single diffusion (Theorem \ref{thm:rhoContraction}), a contraction for the Euler scheme of an interacting particle system (Theorem \ref{thm:rhoContractionForSystem}) and the Poincar\'{e} inequality for $k$-step transition kernels for the Euler scheme of a single diffusion (Theorem \ref{thm:PoincareInequality}). The proofs of these results are presented in Sections \ref{sec: proofForOneParticle}, \ref{sec:proofInteracting} and \ref{sec: proof_local_Poincare}, respectively, with an auxiliary Section \ref{sec:existenceKappa} discussing the construction of the distance function used in the main proofs. We then conclude with two additional sections, discussing applications of our results to concentration inequalities (Section \ref{sec:ConfidenceIntervals}) and convergence rates in KL-divergence using an entropy-cost regularization inequality (Section \ref{sec:EntropyRegularization}).

 \section{Main results}\label{section:main}
We now present the notation and terminology that we use throughout the paper. By $\mathcal{P}(\mathbb{R}^d)$ we denote the space of probability measures over $\mathbb{R}^d$. For $\mu$, $\nu \in \mathcal{P}(\mathbb{R}^d)$ and for $p\in[1,\infty)$, the $L^p$-Wasserstein distance $\mathcal{W}_{p,\rho}$ associated to a cost function $\rho: \mathbb{R}^d\times \mathbb{R}^d \to \mathbb{R}_+$ is defined as 
\begin{equation}\label{eq:defWasserstein}
\mathcal{W}_{p,\rho}(\mu,\nu):= \left(\inf_{\pi \in \Pi(\mu,\nu)} \int_{\mathbb{R}^d\times \mathbb{R}^d} \rho(x,y)^p d \pi(x,y) \right)^{1/p} \,,  
\end{equation}
where $\Pi(\mu,\nu)$ is the set of all couplings between $\mu$ and $\nu$. If $Q$ is a transition kernel, we say that $Q$ satisfies a (true) $\mathcal{W}_{p,\rho}$-contraction if there exists $\theta \in (0,1)$ such that
\begin{equation}\label{eq: Kantorovich_contraction_1}
    \mathcal{W}_{p,\rho}(\mu Q, \nu Q) \leq (1-\theta) \mathcal{W}_{p,\rho}(\mu , \nu ), \quad \text{ for all } \mu,\nu \in \mathcal{P}(\mathbb{R}^d),
\end{equation}
which is of course equivalent to 
\begin{equation}\label{eq: Kantorovich_contraction_2}
    \mathcal{W}_{p,\rho} \left(\mu Q^k, \nu Q^k \right) \leq (1-\theta)^k \mathcal{W}_{p,\rho}(\mu , \nu ), \quad \text{ for all } \mu,\nu \in \mathcal{P}(\mathbb{R}^d) \quad \text{ and for all } k \in \mathbb{N}.
\end{equation}
We say that $Q$ satisfies a weak $\mathcal{W}_{p,\rho}$-contraction if there exist $M \geq 1 >\theta>0 $ such that
\begin{equation}\label{eq: weak_K_contraction}
\mathcal{W}_{p,\rho} \left(\mu Q^k, \nu Q^k \right) \leq M(1-\theta)^k \mathcal{W}_{p,\rho}(\mu , \nu ), \quad \text{ for all } \mu,\nu \in \mathcal{P}(\mathbb{R}^d) \quad \text{ and for all } k \in \mathbb{N}.
\end{equation}
 When $\rho$ is the Euclidean distance on $\R^d$ (denoted $\lvert \cdot \rvert$), we denote $\mathcal{W}_{p,\rho}$ simply by $\mathcal{W}_p$ and we call the above conditions $L^p$-Wasserstein contractions.

\subsection{Contraction for Euler scheme of a single diffusion at high diffusivity}\label{subsec:SingleEuler}
We consider the Euler scheme \eqref{eq:chain}, under the following assumptions on $b$.
\begin{assumption}\label{hypo:1}
Suppose $b: \mathbb{R}^d \to \mathbb{R}^d$ is a Lipschitz function with a Lipschitz constant $L_b>0$ in the sense that 
\begin{equation}
    \lvert b(x) - b(y)\rvert \leq L_b \lvert x -y\rvert \quad \text{ for all } x, y \in \mathbb{R}^d\,.
\end{equation}
\end{assumption}

\begin{assumption}\label{hypo:2}
Suppose there exist $R,c,K>0$ such that
    \begin{equation}
        \langle x-y, b(x)-b(y) \rangle \leq 
        \begin{cases}
            -c \lvert x-y \rvert^2  & \text{if }|x|\geqslant R\text{ or }|y|\geqslant R \\
            K \lvert x-y \rvert^2  & \text{otherwise}
        \end{cases}.
    \end{equation}
\end{assumption}
Our ultimate objective in this setting is to demonstrate an $L^2$-Wasserstein contraction for the Markov chain \eqref{eq:chain}. To this end, we initially establish a $\mathcal{W}_{1,\rho}$-contraction for a carefully chosen  pseudo-distance function $\rho$, which we can subsequently use to retrieve the desired $L^2$-contraction. 
In order to construct a suitable $\rho$, we will need the following auxiliary result.

\begin{proposition}\label{prop:existence_kappa}
Suppose Assumption \ref{hypo:2} is satisfied with some positive constants $c$, $R$ and $K$. There exists $\delta_4>0$ (given in \eqref{eq:delta1234} below) such that, for all $\delta\in(0,\delta_4]$, the following holds. Let $Z$ be a random variable with a standard normal distribution on $\mathbb{R}^d$. For any real numbers $a\in [12K,\infty)$ and $L \in (0,\frac{c}{6}]$, we can construct a function $\kappa: \mathbb{R}^d \to \mathbb{R}$ such that: 
 \begin{enumerate}
 \item The function $\kappa$ is non-negative, bounded, and continuously differentiable.
\item There exists $R_*>R$ such that 
\begin{equation*}
    \kappa(x) = 0 \text{ and } \nabla \kappa(x) =0, \quad \forall x \in \mathbb{R}^d \text{ with }\lvert x \rvert \geq R_*
\end{equation*}
\item For all $x\in\R^d $ with $\lvert x\rvert \leq R$ we have 
\begin{equation}\label{eq:condition_bound_a}
    \mathbb{E}\left[\kappa (x+ \sqrt{2 \delta T}Z)\right] \leq \kappa(x) - a \delta T.
\end{equation}
\item For all $x\in\R^d$ we have 
\begin{equation}\label{eq:condition_bound_L}
    \mathbb{E}\left[\kappa (x+ \sqrt{2 \delta T}Z)\right] \leq \kappa(x) + L\delta T.
\end{equation}
\end{enumerate}
\end{proposition}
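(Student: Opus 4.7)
The plan is to construct $\kappa$ explicitly as a smooth radial bump $\kappa(x) = h(\lvert x \rvert)$ supported in a closed ball $\overline{B}(0, R_*)$, and to deduce the expectation inequalities (iii)--(iv) from pointwise bounds on $\Delta \kappa$ via the heat-semigroup identity
\begin{equation*}
\mathbb{E}\bigl[\kappa(x + \sqrt{2\delta T}\, Z)\bigr] - \kappa(x) \;=\; \int_0^{2\delta T} \mathbb{E}\bigl[\Delta \kappa(x + \sqrt{2u}\, Z)\bigr]\, du,
\end{equation*}
which holds whenever $\kappa$ is $C^2$ with bounded derivatives (and $C^1$ with the required properties follows a fortiori). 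Condition (iv) then follows from $\Delta \kappa \leq L/2$ globally, while (iii) follows from a stronger bound $\Delta \kappa \leq -a'$, with $a'$ slightly larger than $a$, on an enlarged ball $\overline{B}(0, R + \eta)$: for $x \in \overline{B}(0, R)$ and $\delta$ small enough that a Gaussian of variance $2\delta T$ concentrates in $\overline{B}(0, R + \eta)$, the smoothed quantity $P_{2u}(\Delta \kappa)(x)$ still dominates by $-a$, which is the source of the threshold $\delta_4$.

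I would then construct $h$ piecewise: take $h(r) = \alpha - \beta r^2$ on $[0, R+\eta]$ with $\beta = a'/(2d)$ so that $\Delta \kappa \equiv -a'$ there, and interpolate $h$ smoothly down to $0$ on $[R+\eta, R_*]$ with $h'(R_*)=0$ (to give property (ii)), using a $C^2$ profile (for example a cubic spline) whose second derivative is controlled by $L/2$. In the radial Laplacian $h''(r) + \tfrac{d-1}{r}\, h'(r)$, the first-order term is non-positive throughout the transition since $h' \leq 0$ there, so only the bound on $h''$ is binding. Feasibility reduces to choosing $R_*$ large enough to absorb the required slope drop from $-2\beta(R+\eta)$ to $0$ across a width $R_* - R - \eta$, which yields $R_*$ as an explicit function of $a$, $L$, $R$, and $d$, and a positive constant $\alpha$ that ensures $\kappa \geq 0$ globally.

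The main obstacle is the quantitative balance between the steep inner concavity (driven by $a \geq 12K$, so $a$ potentially large, forcing $\beta$ and hence the transition slope to be large) and the gentle global Laplacian budget (driven by $L \leq c/6$, so $L$ potentially small). Reconciling these forces $R_*$ large and $\delta_4$ small in an explicit way that the proof has to track, but no topological obstruction arises. Once $h$, $\eta$, $R_*$, and $\delta_4$ are pinned down, properties (i) and (ii) are immediate from the construction, and (iii)--(iv) follow from the semigroup identity together with the pointwise Laplacian bounds. The exterior case $\lvert x\rvert \geq R_*$ in (iv) can be handled separately: there $\kappa(x) = 0$ but the Gaussian step can re-enter the support of $\kappa$ only with exponentially small probability in $\lvert x\rvert^2/(\delta T)$, and this tail contribution is dominated by $L\delta T$ once $\kappa$ is bounded.
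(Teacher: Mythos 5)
Your proposal is essentially correct, but it follows a genuinely different route from the paper. The paper builds $\kappa$ as an explicit, merely $C^1$, piecewise radial function (an inner downward parabola $\alpha_1-\frac{a}{d}|x|^2$ on $|x|\le 2R$ glued to an outer upward parabola vanishing at $R_*$), and then proves (iii) by computing the Gaussian expectation exactly on the inner quadratic and bounding the truncation error coming from the event $|x+\sqrt{2\delta T}Z|\ge 2R$ via Markov/chi-square tail estimates (this is where $\delta_4$ arises), while (iv) is obtained for \emph{all} $\delta,T$ with no smallness condition simply from the semiconcavity $\kappa(x)-\frac{L}{2d}|x|^2$ concave and Jensen's inequality. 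You instead insist on a globally $C^2$ profile and derive both (iii) and (iv) from pointwise Laplacian bounds through the heat-semigroup (Dynkin) identity, recovering $\delta_4$ from a Gaussian concentration estimate on the enlarged ball. Your route is perfectly viable and arguably more transparent about where $\delta_4$ comes from, at the price of a $C^2$ interpolation (note a single cubic cannot match value, slope and second derivative at both junctions simultaneously, so you need a spline with interior knots or a higher-degree polynomial), whereas the paper's Jensen argument needs no smoothness beyond $C^1$ and gives the $+L\delta T$ bound unconditionally.

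Two small corrections you should make when writing this up. First, the time normalisation in your semigroup identity is off by a factor of two: since $x+\sqrt{2\delta T}Z$ is the heat flow with generator $\Delta$ at time $\delta T$, the identity reads
\begin{equation*}
\mathbb{E}\bigl[\kappa(x+\sqrt{2\delta T}\,Z)\bigr]-\kappa(x)=\int_0^{\delta T}\mathbb{E}\bigl[\Delta\kappa(x+\sqrt{2u}\,Z)\bigr]\,du ,
\end{equation*}
with upper limit $\delta T$, not $2\delta T$; with your choices ($\Delta\kappa\le L/2$ globally, $\Delta\kappa\le -a'$ with $a'>a$ near the origin) the corrected identity still yields (iii) and (iv), so this only affects constants. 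Second, your final remark treating $|x|\ge R_*$ separately is both unnecessary and, as stated, wrong: once $\Delta\kappa\le L/2$ holds globally the semigroup identity already covers all $x$, and the claimed ``exponentially small re-entry probability'' fails for $x$ just outside the support, where the probability of landing back in $\{\kappa>0\}$ is of order one and $\|\kappa\|_\infty$ times that probability is not dominated by $L\delta T$. Drop that paragraph and rely on the global Laplacian bound.
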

The proof of this proposition can be found in Section \ref{sec:existenceKappa}. 
\begin{theorem}\label{thm:rhoContraction} Suppose Assumptions \ref{hypo:1} and \ref{hypo:2} are satisfied. Define $\rho$ as 
\begin{equation}\label{eq: rho}
    \rho(x,y):= \lvert x-y \rvert ^2 (T +\kappa(x) + \kappa(y)) \quad \text{for all } x,y \in \mathbb{R}^d \,,
\end{equation}
with a function $\kappa: \mathbb{R}^d \to \mathbb{R}_+$ as in Proposition \ref{prop:existence_kappa} for some $a\geq 12K$ and $L\leq c/6$. Then there exist $h$, $\delta_0$ and $T_0 > 0$ such that for all $\delta \in (0, \delta_0]$ and all $T \geq T_0$, the Markov kernel $Q$ of the Euler scheme \eqref{eq:chain} satisfies  
 \begin{equation}\label{eq:rhoContraction}
 \mathcal{W}_{1,\rho}(\mu Q, \nu Q) \leq (1-h\delta ) \mathcal{W}_{1,\rho}(\mu , \nu ) \quad \text{ for all } \mu,\nu \in \mathcal{P}(\mathbb{R}^d).
\end{equation}
\end{theorem}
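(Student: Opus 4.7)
The plan is to establish a \emph{pointwise} one-step inequality
\[\mathbb{E}\big[\rho(X_1,Y_1)\,\big|\,X_0=x,\,Y_0=y\big] \leq (1-h\delta)\,\rho(x,y)\qquad\text{for all }x,y\in\R^d,\]
using the synchronous coupling $X_1=x+\delta b(x)+\sqrt{2\delta T}\,Z$, $Y_1=y+\delta b(y)+\sqrt{2\delta T}\,Z$ driven by a single Gaussian vector $Z$. Conclusion \eqref{eq:rhoContraction} then follows by integrating this pointwise bound against an optimal coupling for $\mathcal{W}_{1,\rho}(\mu,\nu)$ in \eqref{eq:defWasserstein}. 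The key structural observation is that $X_1-Y_1=(x-y)+\delta(b(x)-b(y))$ is \emph{deterministic}, so
\[\mathbb{E}[\rho(X_1,Y_1)] \;=\; |X_1-Y_1|^2\cdot\big(T+\mathbb{E}[\kappa(X_1)]+\mathbb{E}[\kappa(Y_1)]\big),\]
and the problem decouples into independent control of the $|X_1-Y_1|^2$ factor (via Assumptions \ref{hypo:1}--\ref{hypo:2}) and of the $\kappa$-factor (via Proposition \ref{prop:existence_kappa}).

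I would split the analysis at the threshold $R$ from Assumption \ref{hypo:2}. In the \emph{far regime} $\max(|x|,|y|)\geq R$, the dissipativity assumption combined with the Lipschitz bound gives $|X_1-Y_1|^2 \leq |x-y|^2(1-2c\delta+L_b^2\delta^2)$, while property (4) of Proposition \ref{prop:existence_kappa} yields $\mathbb{E}[\kappa(X_1)]+\mathbb{E}[\kappa(Y_1)]\leq \kappa(x)+\kappa(y)+2L\delta T$. Since $T+\kappa(x)+\kappa(y)\geq T$, the second factor grows by at most the multiplicative $1+2L\delta$, and with $L\leq c/6$ the drift contraction in the first factor dominates, yielding a net contraction by $1-h_1\delta$ for some $h_1>0$ as soon as $\delta$ is small.

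In the \emph{near regime} $|x|,|y|<R$, the drift only gives expansion $|X_1-Y_1|^2\leq|x-y|^2(1+2K\delta+L_b^2\delta^2)$, but now property (3) delivers the quantitative decrease $\mathbb{E}[\kappa(X_1)]+\mathbb{E}[\kappa(Y_1)]\leq\kappa(x)+\kappa(y)-2a\delta T$. Using the boundedness of $\kappa$, the $\kappa$-factor contracts by at least $1 - 2a\delta T/(T+2\|\kappa\|_\infty)$, which for $T\geq T_0$ sufficiently large is essentially $1-2a\delta$ up to a term vanishing as $T\to\infty$. Because $a\geq 12K$, this beats the $1+2K\delta$ expansion by a margin of order $K\delta$, and one again obtains $\mathbb{E}[\rho(X_1,Y_1)]\leq (1-h_2\delta)\rho(x,y)$. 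Setting $h:=\min(h_1,h_2)$ and choosing $\delta_0,T_0$ so that the $O(\delta^2)$ and $O(1/T)$ remainders are dominated finishes the one-step estimate.

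The main technical obstacle is that Proposition \ref{prop:existence_kappa} bounds $\mathbb{E}[\kappa(x+\sqrt{2\delta T}Z)]$, whereas after one Euler step what actually appears is $\mathbb{E}[\kappa(x+\delta b(x)+\sqrt{2\delta T}Z)]$. Absorbing the deterministic shift $\delta b(x)$ requires a first-order Taylor argument: since $\nabla\kappa$ is bounded and vanishes outside $B(0,R_*)$, and $b$ is locally bounded, the correction to properties (3)--(4) is $O(\delta)$, which is negligible compared to the $O(\delta T)$ budget provided $T\geq T_0$ is large enough. This interdependence between $\delta_0$ and $T_0$, forced by the quadratic weight in $\rho$, is precisely what distinguishes the present argument from the $\mathcal{W}_1$-contraction proofs of \cite{EberleMajka2019} and reflects the ``high-diffusivity'' nature of the result.
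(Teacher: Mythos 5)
Your overall architecture coincides with the paper's proof: synchronous coupling, the factorisation $\mathbb{E}[\rho(X_1,Y_1)]=|X_1-Y_1|^2\,(T+\mathbb{E}[\kappa(X_1)]+\mathbb{E}[\kappa(Y_1)])$, the split at the radius $R$, the use of $a\geq 12K$, $L\leq c/6$ and $T\geq 2\|\kappa\|_\infty$ to make the $\kappa$-factor dominate, and the final passage from the pointwise bound to $\mathcal{W}_{1,\rho}$ by integrating against an (optimal) coupling. The near regime and the balance of constants are handled exactly as in the paper.

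There is, however, one step in your far-regime argument that would fail as written. You absorb the drift shift by the uniform estimate ``the correction to properties (3)--(4) is $O(\delta)$ since $\nabla\kappa$ is bounded and $b$ is locally bounded''. The mean-value bound actually gives a correction of size $\delta\,\|\nabla\kappa\|_\infty\,|b(x)|$, and since $b$ is only Lipschitz, $|b(x)|$ grows linearly in $|x|$; so for the bound \eqref{eq:condition_bound_L}, which must hold for \emph{all} $x$, the Taylor correction is not uniformly $O(\delta)$, and local boundedness of $b$ does not rescue it (the point $x$ ranges over all of $\R^d$ in the far regime). The paper closes this by a further case split at an intermediate radius $\bar R=(R_*+\delta|b(0)|)/(1-\delta L_b)$: for $R\leq|x|\leq\bar R$ one has $|b(x)|\leq\sup_{|y|\leq\bar R}|b(y)|$ and your Taylor argument goes through with $T\geq T_2$; for $|x|\geq\bar R$ one does \emph{not} Taylor-expand at all, but instead notes that $|x+\delta b(x)|\geq R_*$ (see \eqref{eq:greatthanRstar}), applies \eqref{eq:condition_bound_L} with base point $x+\delta b(x)$, and uses $\kappa(x+\delta b(x))=0=\kappa(x)$. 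You already have all the ingredients for this repair ($\kappa$ and $\nabla\kappa$ vanish outside $B(0,R_*)$, $\delta<1/L_b$), but the case distinction itself is the missing piece; with it inserted, your proof becomes the paper's Proposition \ref{prop: properties_kappa_b} and the rest of your argument stands.
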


This is proven in Section~\ref{sec: proofForOneParticle}. Explicit expressions are provided for $\delta_0$ in \eqref{eq: delta0}, $T_0$ in \eqref{eq: T_0} and   $h$ in \eqref{eq:h} in Section~\ref{sec: proofForOneParticle} (these expressions involve $\kappa$ and can be bounded in terms of the parameters of the problem only using \eqref{eq:bound_kappa}). The following corollary, also proven in Section~\ref{sec: proofForOneParticle}, shows that Theorem \ref{thm:rhoContraction} implies a weak $L^2$-Wasserstein contraction.
\begin{corollary}\label{cor: WContraction}
Suppose Assumptions \ref{hypo:1} and \ref{hypo:2} hold. Then there exist $h$, $\delta_0$, $T_0 > 0$ and $M \geq 1$, such that for all $\delta \in (0, \delta_0]$ and all $T \geq T_0$, the Markov kernel $Q$ of the Euler scheme \eqref{eq:chain} satisfies, for any $k \geq 1$,  
\begin{equation}\label{eq:WContraction}
    \mathcal{W}_2(\mu Q^k, \nu Q^k) \leq M (1-h \delta)^k \mathcal{W}_2(\mu,\nu), \quad \text{ for all } \mu,\nu \in \mathcal{P}(\mathbb{R}^d).
\end{equation}
\end{corollary}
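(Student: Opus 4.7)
The plan is to exploit the fact that the cost $\rho$ defined in \eqref{eq: rho} is comparable, up to multiplicative constants, to the squared Euclidean distance. By Proposition \ref{prop:existence_kappa}, the function $\kappa$ is non-negative and bounded (in fact, compactly supported in the ball of radius $R_*$), so setting $C_\kappa := \sup_{x\in\R^d} \kappa(x) < \infty$ gives the pointwise sandwich
\begin{equation*}
T \lvert x-y\rvert^2 \;\leq\; \rho(x,y) \;\leq\; (T + 2 C_\kappa)\, \lvert x-y\rvert^2 \qquad \text{for all } x,y\in\R^d.
\end{equation*}

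First, I would transfer this pointwise comparison to the level of transport costs. Integrating the lower bound against an arbitrary coupling $\pi\in\Pi(\mu,\nu)$ and taking the infimum yields $T\,\mathcal{W}_2(\mu,\nu)^2 \leq \mathcal{W}_{1,\rho}(\mu,\nu)$, while integrating the upper bound against a coupling that is $\mathcal{W}_2$-optimal (and then using it as a competitor in the definition of $\mathcal{W}_{1,\rho}$) gives $\mathcal{W}_{1,\rho}(\mu,\nu) \leq (T + 2 C_\kappa)\,\mathcal{W}_2(\mu,\nu)^2$. Combining both inequalities:
\begin{equation*}
T\,\mathcal{W}_2(\mu,\nu)^2 \;\leq\; \mathcal{W}_{1,\rho}(\mu,\nu) \;\leq\; (T + 2 C_\kappa)\,\mathcal{W}_2(\mu,\nu)^2.
\end{equation*}

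Next, I would apply the iterated form of Theorem \ref{thm:rhoContraction}, namely $\mathcal{W}_{1,\rho}(\mu Q^k, \nu Q^k) \leq (1-h\delta)^k\, \mathcal{W}_{1,\rho}(\mu,\nu)$ (the equivalence \eqref{eq: Kantorovich_contraction_1}$\Leftrightarrow$\eqref{eq: Kantorovich_contraction_2}), and sandwich it between the two comparison estimates from the previous step:
\begin{equation*}
T\,\mathcal{W}_2(\mu Q^k, \nu Q^k)^2 \;\leq\; (1-h\delta)^k (T + 2 C_\kappa)\,\mathcal{W}_2(\mu,\nu)^2.
\end{equation*}
Dividing by $T$ and taking a square root produces
\begin{equation*}
\mathcal{W}_2(\mu Q^k, \nu Q^k) \;\leq\; \sqrt{\frac{T + 2C_\kappa}{T}}\;(1-h\delta)^{k/2}\,\mathcal{W}_2(\mu,\nu).
\end{equation*}

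Finally, to match the exponent in the statement, I would use the elementary concavity bound $(1-x)^{1/2} \leq 1 - x/2$ for $x\in[0,1]$, which converts $(1-h\delta)^{k/2}$ into $(1-\tilde h\delta)^{k}$ with $\tilde h := h/2$ (valid as soon as $h\delta_0 \leq 1$, which can be arranged by shrinking $\delta_0$ if necessary). Setting $M := \sqrt{(T+2C_\kappa)/T}\geq 1$ then yields \eqref{eq:WContraction}. There is essentially no obstacle here; the only point worth flagging is that the constant $M$ is strictly larger than $1$ whenever $\kappa \not\equiv 0$, which is precisely why this route gives a \emph{weak} $L^2$-contraction rather than a strict one, and why the construction of $\kappa$ and the quadratic modification $\rho$ in Theorem \ref{thm:rhoContraction} (not just a direct $\mathcal{W}_2$-argument) were needed in the first place.
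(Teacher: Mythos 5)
Your proposal is correct and follows essentially the same route as the paper: bound $\rho$ pointwise between $T\lvert x-y\rvert^2$ and $(T+2\lVert\kappa\rVert_\infty)\lvert x-y\rvert^2$, transfer this to a comparison of $\mathcal{W}_{1,\rho}$ with $\mathcal{W}_2^2$, and iterate Theorem \ref{thm:rhoContraction}. The only divergence is bookkeeping: you keep the squares explicit and take a square root at the end (yielding rate $h/2$ and $M=\sqrt{1+2\lVert\kappa\rVert_\infty/T}$), while the paper's displayed chain suppresses the squares and states $M=1+2\lVert\kappa\rVert_\infty/T$ with rate $h\delta$; since the corollary only asserts the existence of such constants, both versions prove it.
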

We give an explicit expression for $M$ in \eqref{eq: explicit_M} ($\delta_0,T_0$ and $h$ are as in Theorem~\ref{thm:rhoContraction}).

\subsection{Contraction for interacting particle system at high diffusivity.}
In this subsection, our objective is to establish contraction properties for interacting particle systems. The specific type of system that we are investigating is given by:
\begin{equation}\label{eq: descrete_ststem}
    \forall i \in \llbracket 1,N\rrbracket, \quad X^i_{k+1} = X^i_k + \delta F(X^i_k) +\delta G_i(\textbf{X}_k)+ \sqrt{2\delta T} Z^i_k.
\end{equation}
We assume that the sequence $(Z^i_k)_{{i\in \llbracket 1,N\rrbracket, k \in \mathbb{N} }}$ comprises independent standard $d$-dimensional Gaussian vectors, and that the process $(\textbf{X}_k)_{k \geq 0}$ corresponds to the dynamics of the interacting particle system $(X^1_k,...,X^N_k)_{k \geq 0}$. We denote the transition kernel of the system \eqref{eq: descrete_ststem} by $\tilde{Q}$, and note that $\textbf{x}$ and $\textbf{y}$ are points in $\mathbb{R}^{dN}$ that can represent the states of the interacting particle system. Moreover, for any $i \in \llbracket 1,N\rrbracket$, $x^{(i)}$ and $y^{(i)}$ are points in $\mathbb{R}^d$ representing the position of the $i^{th}$ particle, i.e., $\textbf{x}:=(x^{(1)},...,x^{(N)})$ and $\textbf{y}:=(y^{(1)},...,y^{(N)})$. The following assumptions are used throughout this section.
\begin{assumption}\label{hypo: F}
    We assume that $F: \mathbb{R}^d \to \mathbb{R}^d $ is a Lipschitz function with Lipschitz constant $L_F>0$ in the sense that
    \begin{equation}
        \lvert F(x) - F(y)\rvert \leq L_F \lvert x -y\rvert \quad \text{ for all } x, y \in \mathbb{R}^d\,.
    \end{equation}
       Furthermore, we assume that there exist $R,c,K>0$ such that
    \begin{equation}
        \langle x-y, F(x)-F(y) \rangle \leq 
        \begin{cases}
            -c \lvert x-y \rvert^2  & \text{if }|x|\geqslant R\text{ or }|y|\geqslant R \\
            K \lvert x-y \rvert^2  & \text{otherwise.}
        \end{cases}   
    \end{equation}
\end{assumption}

\begin{assumption}\label{hypo: G}
   We denote $G:= (G_1,...,G_N)$. We assume that there exist $L_G$, $C_G$, $M_G>0$ and $p \in [1,\infty)$ such that, for all $\textbf{x}, \textbf{y} \in \mathbb{R}^{dN}$ and all $i \in \llbracket 1,N\rrbracket$,
   \begin{align}
   \lvert G(\textbf{x})- G(\textbf{y}) \rvert & \leq L_G \lvert \textbf{x} - \textbf{y}\rvert \label{eq: hypG1}\\
   \sum_{j=1}^N \lvert x^{(j)} - y^{(j)} \rvert \lvert G_j(\textbf{x}) - G_j(\textbf{y})\rvert & \leq C_G\lvert \textbf{x}-\textbf{y}\rvert^2 \label{eq: hypG2}\\
   \lvert G_i (\textbf{x})\rvert & \leq M_G(1+\lvert x^{(i)}\rvert^p). \label{eq: hypG3}
   \end{align}
\end{assumption}
Note that the assumptions on $F$ are the same as assumptions on $b$ in Subsection \ref{subsec:SingleEuler}. Hence we can construct a function $\kappa$ exactly as in Proposition \ref{prop:existence_kappa} (where $R,c,K$ are now the one associated to $F$). We consider such a function in the next statement.

\begin{theorem}\label{thm:rhoContractionForSystem}
     Suppose Assumptions \ref{hypo: F} and \ref{hypo: G} hold. We define a function $\Tilde{\rho}: \mathbb{R}^{dN} \times \mathbb{R}^{dN} \to \mathbb{R}_+$ as
     \begin{equation}\label{eq: defRhoTilde}
        \tilde{\rho} (\textbf{x},\textbf{y}) :=\sum_{i=1}^N \lvert x^{(i)}-y^{(i)}\rvert^2(T + \kappa(x^{(i)}) +\kappa(y^{(i)}))
    \end{equation}
    with a function $\kappa: \mathbb{R}^d \to \mathbb{R}_+$ as in Proposition \ref{prop:existence_kappa}. Then there exist positive real numbers $\tilde{\delta}_0$ and $\tilde{T}_0$ such that when $\delta \in(0, \tilde{\delta}_0]$ and $T \geq \tilde{T}_0$, we have
    \begin{equation}
        \mathcal{W}_{1,\Tilde{\rho}}\left(\mu \Tilde{Q},\nu \Tilde{Q}\right) \leq \left(1-\left(h - \tilde{h}\right)\delta\right) \mathcal{W}_{1,\Tilde{\rho}}\left(\mu ,\nu \right), \quad \forall \mu,\nu \in \mathcal{P}(\mathbb{R}^{dN}) \,,
    \end{equation}
    where $h$ is given by \eqref{eq:h} and $\tilde{h}$ is defined as
    \begin{equation}\label{eq: defhTilde}
        \tilde{h}:=(\delta L_G^2 +2 C_G + 2\delta L_F C_G)(\tilde{T}_0+2 \| \kappa \|_{\infty}) \,. 
    \end{equation}
    The explicit definition of $\tilde{\delta}_0$ and $\tilde{T}_0$ is given in \eqref{eq: tilde_delta_0} and \eqref{eq: tildeT_0}.
\end{theorem}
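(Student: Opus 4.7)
The strategy is to extend the synchronous coupling argument underlying the proof of Theorem~\ref{thm:rhoContraction} to the $N$-particle system, treating the interaction forces $G_i$ as a perturbation controlled through Assumption~\ref{hypo: G}. I would couple two copies $\mathbf X_k$ and $\mathbf Y_k$ of the chain driven by the same Gaussian increments $(Z^i_k)$, so that the noise cancels in each coordinate-wise difference and
\[
X^i_{k+1}-Y^i_{k+1}=\hat\Delta_i+\delta g_i,\qquad \hat\Delta_i:=(X^i_k-Y^i_k)+\delta\bigl(F(X^i_k)-F(Y^i_k)\bigr),\quad g_i:=G_i(\mathbf X_k)-G_i(\mathbf Y_k),
\]
is $\mathcal F_k$-measurable. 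This yields the factorisation
\[
\mathbb E\bigl[\rho(X^i_{k+1},Y^i_{k+1})\mid\mathcal F_k\bigr]=|X^i_{k+1}-Y^i_{k+1}|^2\cdot\mathbb E\bigl[T+\kappa(X^i_{k+1})+\kappa(Y^i_{k+1})\mid\mathcal F_k\bigr],
\]
separating the deterministic squared distance from the expected Lyapunov-type factor.

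Expanding $|X^i_{k+1}-Y^i_{k+1}|^2=|\hat\Delta_i|^2+2\delta\langle\hat\Delta_i,g_i\rangle+\delta^2|g_i|^2$, the product of $|\hat\Delta_i|^2$ and the expected $\kappa$-factor is exactly the quantity handled in the single-particle analysis of Theorem~\ref{thm:rhoContraction}: applying \eqref{eq:condition_bound_a} and \eqref{eq:condition_bound_L} at the Gaussian center $X^i_k+\delta F(X^i_k)+\delta G_i(\mathbf X_k)$ reproduces the coordinate-wise contraction $(1-h\delta)\rho(X^i_k,Y^i_k)$, which sums to $(1-h\delta)\tilde\rho(\mathbf X_k,\mathbf Y_k)$. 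For the remaining cross and squared-interaction terms I would bound the expected $\kappa$-factor uniformly by $\tilde T_0+2\|\kappa\|_\infty$, then combine Cauchy--Schwarz with the Lipschitz estimate $|\hat\Delta_i|\le(1+\delta L_F)|X^i_k-Y^i_k|$ and Assumption~\ref{hypo: G}: inequality~\eqref{eq: hypG2} gives $\sum_i|X^i_k-Y^i_k||g_i|\le C_G|\mathbf X_k-\mathbf Y_k|^2$, while \eqref{eq: hypG1} gives $\sum_i|g_i|^2\le L_G^2|\mathbf X_k-\mathbf Y_k|^2$. Summed over $i$, the interaction correction contributes at most $\tilde h\,\delta\,\tilde\rho(\mathbf X_k,\mathbf Y_k)$ with $\tilde h$ as in \eqref{eq: defhTilde}, producing the one-step bound $\mathbb E[\tilde\rho(\mathbf X_{k+1},\mathbf Y_{k+1})\mid\mathcal F_k]\le (1-(h-\tilde h)\delta)\tilde\rho(\mathbf X_k,\mathbf Y_k)$. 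Starting from an optimal $\mathcal W_{1,\tilde\rho}$-coupling of $\mu$ and $\nu$ and integrating this bound delivers the desired contraction.

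The main obstacle is the careful application of \eqref{eq:condition_bound_a}-\eqref{eq:condition_bound_L} at the interaction-shifted Gaussian center rather than at the pure single-particle drift-shifted one. The additional $\delta G_i(\mathbf X_k)$ term perturbs the argument of $\kappa$ by an amount that must be controlled via the $C^1$ regularity of $\kappa$ and the growth bound \eqref{eq: hypG3}; absorbing this perturbation into the $O(\delta^2)$ remainder of the single-particle estimate is what forces the explicit quantitative restrictions on $\tilde\delta_0$ and $\tilde T_0$ stated in \eqref{eq: tilde_delta_0}-\eqref{eq: tildeT_0}. The rest is bookkeeping: ensuring that $\tilde h<h$ so that the net rate $h-\tilde h$ remains strictly positive, which is precisely why $T$ must be taken sufficiently large.
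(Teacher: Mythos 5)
Your proposal follows essentially the same route as the paper's proof: synchronous coupling, separation of the confinement part (handled as in Theorem~\ref{thm:rhoContraction} by applying the $\kappa$-inequalities at the interaction-shifted center, which the paper isolates as Proposition~\ref{prop:kappa_system}, using \eqref{eq: hypG3}, the $\mathcal{C}^1$ bound on $\kappa$ and the enlarged thresholds $\tilde T_1,\tilde T_2,\tilde\delta_1$) from the interaction correction, which is bounded by $T+2\lVert\kappa\rVert_\infty$ times the Cauchy--Schwarz estimates from \eqref{eq: hypG1}--\eqref{eq: hypG2}, giving the $\tilde h\delta$ loss and then the Wasserstein contraction via an optimal coupling. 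Only your closing aside is slightly off: the largeness of $T$ is needed for the shifted $\kappa$-estimates (and the drift perturbation is absorbed there as an $O(\delta)$ term against $a\delta T$, $L\delta T$, not into an $O(\delta^2)$ remainder), whereas $\tilde h<h$ is not guaranteed by large $T$ but by weak interaction relative to confinement (cf.\ Remark~\ref{remark:rhoContractionForSystem}); this does not affect the proof of the stated bound.
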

Note that Theorem \ref{thm:rhoContractionForSystem} only gives us an $L^2$ contraction when $h > \tilde{h}$. Comparing the formulas for $h$ and $\tilde{h}$, it is evident that $h > \tilde{h}$ holds only when the influence of the interaction component $G$ of the drift is weaker than the influence of the confinement component $F$. This is consistent with other existing ergodicity results for McKean-Vlasov SDEs in the literature. 

\begin{corollary}\label{cor:particles}
    Suppose Assumptions \ref{hypo: F} and \ref{hypo: G} hold. When $\delta \in(0,\tilde{\delta}_0]$ and $T \geq \tilde{T}_0$, there exists $M>1$ such that the transition kernel $\Tilde{Q}$ satisfies, for any $k \geq 1$, 
    \begin{equation}
        \mathcal{W}_2\left(\mu \Tilde{Q}^k,\nu \Tilde{Q}^k\right) \leq M\left(1-\left(h - \tilde{h}\right)\delta\right)^k \mathcal{W}_2\left(\mu ,\nu \right), \quad \text{ for all } \mu,\nu \in \mathcal{P}(\mathbb{R}^{dN}).
    \end{equation}
\end{corollary}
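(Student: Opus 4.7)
The plan is to deduce this $\mathcal{W}_2$ corollary from the $\mathcal{W}_{1,\tilde\rho}$ contraction already proved in Theorem \ref{thm:rhoContractionForSystem}, following essentially the same template as the single-particle Corollary \ref{cor: WContraction}.

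The first step is a pointwise comparison of $\tilde\rho$ with the squared Euclidean norm on $\mathbb{R}^{dN}$. Since $\kappa$ is non-negative and bounded by $\|\kappa\|_\infty$ (item~(1) of Proposition \ref{prop:existence_kappa}), summing over the $N$ coordinates gives
$$T|\mathbf{x}-\mathbf{y}|^2 \;\leq\; \tilde\rho(\mathbf{x},\mathbf{y}) \;\leq\; (T+2\|\kappa\|_\infty)\,|\mathbf{x}-\mathbf{y}|^2.$$
Evaluating the upper bound on a $\mathcal{W}_2$-optimal coupling of $\mu$ and $\nu$, and the lower bound on a $\mathcal{W}_{1,\tilde\rho}$-optimal coupling, translates this into the corresponding sandwich at the level of Wasserstein quantities:
$$T\,\mathcal{W}_2^2(\mu,\nu) \;\leq\; \mathcal{W}_{1,\tilde\rho}(\mu,\nu) \;\leq\; (T+2\|\kappa\|_\infty)\,\mathcal{W}_2^2(\mu,\nu).$$

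Next I would iterate Theorem \ref{thm:rhoContractionForSystem} $k$ times to get $\mathcal{W}_{1,\tilde\rho}(\mu\tilde Q^k,\nu\tilde Q^k)\leq (1-(h-\tilde h)\delta)^k\,\mathcal{W}_{1,\tilde\rho}(\mu,\nu)$ and chain it with the two-sided comparison, dividing through by $T$, to obtain
$$\mathcal{W}_2^2(\mu\tilde Q^k,\nu\tilde Q^k) \;\leq\; \tfrac{T+2\|\kappa\|_\infty}{T}\,(1-(h-\tilde h)\delta)^k\,\mathcal{W}_2^2(\mu,\nu).$$
Taking square roots produces a $\mathcal{W}_2$-contraction with multiplicative constant $M=\sqrt{1+2\|\kappa\|_\infty/\tilde T_0}$, which is uniform in $T\geq\tilde T_0$.

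The argument is essentially routine once Theorem \ref{thm:rhoContractionForSystem} is available; the only subtle point is that $\tilde\rho$ behaves like a squared distance, so the square-root step halves the exponent of the rate, producing $(1-(h-\tilde h)\delta)^{k/2}$. Presenting the result in the form stated in the corollary then requires either invoking the elementary bound $(1-x)^{1/2}\leq 1-x/2$ to rewrite this as a rate at half the speed, or absorbing the discrepancy into the multiplicative constant $M$, exactly as in the proof of Corollary \ref{cor: WContraction}. This is the only point that calls for any care, and no genuinely new difficulty arises in the interacting-particle setting compared to the single-particle case.
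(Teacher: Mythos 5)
Your route is exactly the paper's: Corollary \ref{cor:particles} is proved there by the same sandwich argument used for Corollary \ref{cor: WContraction}, namely $T|\mathbf{x}-\mathbf{y}|^2\le\tilde\rho(\mathbf{x},\mathbf{y})\le(T+2\lVert\kappa\rVert_\infty)|\mathbf{x}-\mathbf{y}|^2$ combined with iterating the $\mathcal{W}_{1,\tilde\rho}$-contraction of Theorem \ref{thm:rhoContractionForSystem}. You are in fact more careful than the printed proof of Corollary \ref{cor: WContraction}, which writes $T\,\mathcal{W}_2(\mu Q^k,\nu Q^k)\le\mathcal{W}_{1,\rho}(\mu Q^k,\nu Q^k)$ although, as you observe, the comparison really involves $\mathcal{W}_2^2$.

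The one point to correct is your closing paragraph. After taking square roots you have $\mathcal{W}_2(\mu\tilde Q^k,\nu\tilde Q^k)\le\sqrt{1+2\lVert\kappa\rVert_\infty/T}\,\bigl(1-(h-\tilde h)\delta\bigr)^{k/2}\,\mathcal{W}_2(\mu,\nu)$, and of your two proposed repairs only the first is legitimate: $(1-x)^{1/2}\le 1-x/2$ turns this into a weak contraction at rate $(h-\tilde h)\delta/2$, i.e.\ half the rate in the statement. The second option, absorbing the discrepancy into $M$, cannot work: the ratio $\bigl(1-(h-\tilde h)\delta\bigr)^{k/2}\big/\bigl(1-(h-\tilde h)\delta\bigr)^{k}$ grows geometrically in $k$, so no constant independent of $k$ compensates for it. Hence what your computation actually delivers is the corollary with $M=\sqrt{1+2\lVert\kappa\rVert_\infty/T}$ and exponent $k/2$ (equivalently, rate $(h-\tilde h)\delta/2$), not the full rate $(h-\tilde h)\delta$ with $M=1+2\lVert\kappa\rVert_\infty/T$. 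This mismatch is inherited from the paper's own proof of Corollary \ref{cor: WContraction} (where the missing squares hide it) rather than being a defect of your approach, but you should not claim that both repair options yield the stated form.
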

Corollary \ref{cor:particles} follows from Theorem \ref{thm:rhoContractionForSystem} in the same way as Corollary \ref{cor: WContraction} follows from Theorem \ref{thm:rhoContraction}. The formula for $M$ depends only on $T$ and $\kappa$, and is hence independent of the step size $\delta$ and the number of particles $N$.

\subsection{Euler scheme, weak \texorpdfstring{$L^2$}{L^2}-Wasserstein contraction and local Poincar\'{e} inequality}
We say that  a Markov transition kernel $\Bar{Q}$ on $\mathbb{R}^d$  satisfies the local Poincar\'{e} inequality with constant $C_{LP}$ if for all $x\in \mathbb{R}^d$ the measure $\Bar{Q}(x,\cdot)$ satisfies the Poincar\'{e} inequality with constant $C_{LP}$, namely, we have for any $f \in \mathcal{C}^\infty(\mathbb{R}^d)$ with polynomial growth
\begin{equation}\label{eq: localPoincare}
    \Bar{Q}f^2(x) - (\Bar{Q}f)^2(x) \leq C_{LP}\Bar{Q}\lvert \nabla f\rvert^2(x), \quad \text{ for all } x\in \mathbb{R}^d.
\end{equation}
We demonstrate that weak $L^2$-Wasserstein contraction and local Poincar\'{e} inequality of $\bar{Q}$ are sufficient conditions to establish the local Poincar\'{e} inequality for $\Bar{Q}^k$ with any $k\in \mathbb{N}$, as well as for the invariant measure $\pi_\infty$ of the kernel $\Bar{Q}$.
\begin{theorem}\label{thm:PoincareInequality}
Let $\bar{Q}$ be a Markov transition operator on $\mathbb{R}^d$. Suppose $\bar{Q}$ satisfies the weak $L^2$-Wasserstein contraction in the sense that there exist $\Bar{M}>1>\Bar{w}>0$ such that
\begin{equation}\label{eq: W2ContractionGeneral}
    \mathcal{W}_2(\mu \bar{Q}^k, \nu \bar{Q}^k)\leq \bar{M} (1-\bar{w})^k \mathcal{W}_2(\mu , \nu ), \quad \text{ for all } \mu,\nu \in \mathcal{P}(\mathbb{R}^d),\  \text{ and for all } k \in \mathbb{N}.
\end{equation}
Suppose moreover that $\Bar{Q}$ satisfies the local Poincar\'{e} inequality with constant $C_{LP}$. Then for any $k \in \mathbb{N}$, $\bar{Q}^k$ satisfies the local Poincar\'{e} inequality with constant $\frac{C_{LP} \bar{M}^2}{1-(1-\bar{w})^2}$. Moreover, the invariant measure of $\Bar{Q}$ also satisfies the Poincar\'{e} inequality with the same constant.
\end{theorem}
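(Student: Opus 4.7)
The plan is to combine three ingredients: a telescoping identity for the variance under $\bar Q^k$, the one-step local Poincar\'e inequality applied term by term, and Kuwada-type duality that turns the weak $L^2$-Wasserstein contraction into a pointwise gradient estimate for $\bar Q^k$.

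First, for a fixed $f\in\mathcal{C}^\infty(\R^d)$ with polynomial growth, I would set $V_k(f)(x):=\bar Q^k f^2(x)-(\bar Q^k f)^2(x)$ and derive the standard identity
\begin{equation*}
V_{k+1}(f)(x)\;=\;\bar Q V_k(f)(x)+\bigl[\bar Q(\bar Q^k f)^2(x)-(\bar Q^{k+1}f)^2(x)\bigr]\,,
\end{equation*}
recognising the bracketed term as $V_1(\bar Q^k f)(x)$. Iterating gives the telescoping decomposition
\begin{equation*}
V_k(f)(x)\;=\;\sum_{j=0}^{k-1}\bar Q^j V_1\bigl(\bar Q^{k-1-j}f\bigr)(x)\,.
\end{equation*}
Applying the local Poincar\'e inequality \eqref{eq: localPoincare} satisfied by $\bar Q$ to each summand bounds $V_1(\bar Q^{k-1-j}f)(y)\le C_{LP}\,\bar Q|\nabla \bar Q^{k-1-j}f|^2(y)$.

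Next I would convert the weak $L^2$-Wasserstein contraction \eqref{eq: W2ContractionGeneral} into a gradient estimate by Kuwada's duality. Specialising \eqref{eq: W2ContractionGeneral} to Dirac masses yields $\mathcal{W}_2(\bar Q^m(x,\cdot),\bar Q^m(y,\cdot))\le \bar M(1-\bar w)^m|x-y|$ for every $m$, which (Kuwada, \emph{J.\ Funct.\ Anal.\ }2010) is equivalent to the pointwise bound
\begin{equation*}
|\nabla \bar Q^m f|^2(x)\;\le\; \bar M^2(1-\bar w)^{2m}\,\bar Q^m|\nabla f|^2(x)\quad\text{for all }x\in\R^d\,.
\end{equation*}
Plugging this (with $m=k-1-j$) into the previous bound and using $\bar Q^{j+1}\bar Q^{k-1-j}=\bar Q^k$ gives
\begin{equation*}
V_k(f)(x)\;\le\; C_{LP}\bar M^2\Bigl(\sum_{j=0}^{k-1}(1-\bar w)^{2(k-1-j)}\Bigr)\bar Q^k|\nabla f|^2(x)\;\le\;\frac{C_{LP}\bar M^2}{1-(1-\bar w)^2}\,\bar Q^k|\nabla f|^2(x)\,,
\end{equation*}
which is exactly the claimed local Poincar\'e inequality for $\bar Q^k$.

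Finally, for the invariant measure $\pi_\infty$, existence and uniqueness follow from \eqref{eq: W2ContractionGeneral} by a standard Cauchy argument in $\mathcal{W}_2$, and for any fixed $x$ the quantities $\bar Q^k f^2(x)$, $\bar Q^k f(x)$, $\bar Q^k|\nabla f|^2(x)$ converge to $\pi_\infty(f^2)$, $\pi_\infty(f)$, $\pi_\infty(|\nabla f|^2)$ respectively as $k\to\infty$ (weak convergence together with the polynomial growth of $f,|\nabla f|^2$ and $\mathcal{W}_2$-control of moments). Passing to the limit in the inequality just proved yields the Poincar\'e inequality for $\pi_\infty$ with the same constant. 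I expect the main technical obstacle to be the careful justification of Kuwada's duality in this non-smooth Markov-chain setting (one may prefer to work with a probabilistic interpolation argument $\nabla \bar Q^m f(x)=\lim_{|h|\to 0}|h|^{-1}\int(f(y')-f(y))\pi_{x,x+h}(dy,dy')$ over the $\mathcal{W}_2$-optimal coupling, then apply Cauchy--Schwarz), and the dominated-convergence justification for the $k\to\infty$ limit, which requires the uniform $\mathcal{W}_2$-moment bounds inherited from \eqref{eq: W2ContractionGeneral}.
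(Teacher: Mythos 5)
Your proposal is correct and follows essentially the same route as the paper's proof: the same telescoping decomposition (yours is just reindexed via $j\mapsto k-1-j$), the same term-by-term application of the one-step local Poincar\'e inequality, the same use of Kuwada's duality (\cite{Kuwada1}, which indeed covers general Markov kernels, so the obstacle you flag is handled by that citation) to get $|\nabla \bar Q^m f|^2\le \bar M^2(1-\bar w)^{2m}\bar Q^m|\nabla f|^2$, and the same geometric-series bound and passage to the limit for the invariant measure. Your extra care about justifying the $k\to\infty$ limit via moment control is a reasonable refinement of the paper's brief weak-convergence argument, but it does not change the approach.
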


\begin{remark}
This result (and its proof) is similar to \cite[Theorem 1]{MalrieuTalay} and \cite[Lemma 3.2]{MonmarchePDMP}. The difference with \cite{MalrieuTalay} is that the Poincar\'{e} constant in our Theorem~\ref{thm:PoincareInequality} is uniform in time (and hence holds for the invariant measure), and the difference with \cite{MonmarchePDMP} is that Theorem~\ref{thm:PoincareInequality} allows $\bar M>1$, i.e., $\bar Q^k$ only has to be a $\mathcal W_2$-contraction for $k$ large enough.
\end{remark}

Note that the result above is formulated for an arbitrary transition kernel $\bar{Q}$ satisfying the local Poincar\'{e} inequality and the Wasserstein contraction condition \eqref{eq: W2ContractionGeneral}. Since for the transition kernel $Q$ of the Euler scheme \eqref{eq:chain} we always have the local Poincar\'{e} inequality (see Section \ref{sec: proof_local_Poincare} for details), and under Assumptions \ref{hypo: F} and \ref{hypo: G} we have \eqref{eq: W2ContractionGeneral} (for $T$ large enough), we  obtain the following corollary.

\begin{corollary}\label{cor:Poincare}
    Let $Q$ be the transition kernel of the Euler scheme \eqref{eq:chain}. Let Assumptions~\ref{hypo:1} and \ref{hypo:2} hold, and let $h,\delta_0,T_0,M$ be as in Corollary~\ref{cor: WContraction}. Assume that $\delta<\delta_0$ and $T\geqslant T_0$. Then, for all $k \geq 1$, $Q^k$ satisfies the local Poincar\'{e} inequality with   constant $\frac{2TM^2}{2h - h^2\delta}$. Moreover, the invariant measure $\pi_{\infty}$ of $Q$ satisfies the Poincar\'{e} inequality with the same constant.
\end{corollary}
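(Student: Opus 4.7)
The plan is to view this as a direct application of Theorem~\ref{thm:PoincareInequality} to $\bar Q=Q$, so the work reduces to verifying the two hypotheses of that theorem with explicit constants that make the stated formula come out correctly.

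First I would establish the local Poincar\'{e} inequality for $Q$ itself. For fixed $x\in\mathbb{R}^d$, the measure $Q(x,\cdot)$ is the law of $x+\delta b(x)+\sqrt{2T\delta}\,Z$ with $Z\sim\mathcal{N}(0,I_d)$, i.e.\ it is a Gaussian measure on $\mathbb{R}^d$ with covariance $2T\delta\, I_d$. The classical Gaussian Poincar\'{e} inequality (together with the translation invariance of the gradient) yields, for every smooth $f$ of polynomial growth,
\begin{equation*}
Qf^2(x)-(Qf)^2(x)\;\leq\;2T\delta\,Q|\nabla f|^2(x)\,,
\end{equation*}
so that $Q$ satisfies \eqref{eq: localPoincare} with constant $C_{LP}=2T\delta$, uniformly in $x$.

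Next I would invoke Corollary~\ref{cor: WContraction}, which under Assumptions~\ref{hypo:1}--\ref{hypo:2}, for $\delta\in(0,\delta_0]$ and $T\geq T_0$, provides the weak $L^2$-Wasserstein contraction \eqref{eq:WContraction} with multiplicative constant $M\geq 1$ and rate $1-h\delta\in(0,1)$. This is exactly hypothesis \eqref{eq: W2ContractionGeneral} of Theorem~\ref{thm:PoincareInequality} with $\bar M=M$ and $\bar w=h\delta$.

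Finally, I would plug everything into the conclusion of Theorem~\ref{thm:PoincareInequality}: for any $k\geq 1$, the kernel $Q^k$ satisfies the local Poincar\'{e} inequality with constant
\begin{equation*}
\frac{C_{LP}\,\bar M^2}{1-(1-\bar w)^2}\;=\;\frac{2T\delta\, M^2}{1-(1-h\delta)^2}\;=\;\frac{2T\delta\, M^2}{2h\delta-h^2\delta^2}\;=\;\frac{2TM^2}{2h-h^2\delta}\,,
\end{equation*}
and the invariant measure $\pi_\infty$ of $Q$ (whose existence follows from the contraction in Corollary~\ref{cor: WContraction}) inherits the Poincar\'{e} inequality with the same constant, again by Theorem~\ref{thm:PoincareInequality}. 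There is no real obstacle here beyond the bookkeeping: the only non-trivial input is the Gaussian Poincar\'{e} inequality used to get $C_{LP}=2T\delta$, and the only care needed is matching constants so that the algebraic simplification above reproduces exactly the expression claimed in the corollary.
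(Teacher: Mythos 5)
Your proposal is correct and follows exactly the paper's own argument: identify $Q(x,\cdot)$ as the Gaussian $\mathcal{N}(x+\delta b(x),\,2\delta T I_d)$ to get the local Poincar\'{e} inequality with constant $C_{LP}=2\delta T$, take $\bar M=M$ and $\bar w=h\delta$ from Corollary~\ref{cor: WContraction}, and apply Theorem~\ref{thm:PoincareInequality} with the same algebraic simplification $\frac{2T\delta M^2}{1-(1-h\delta)^2}=\frac{2TM^2}{2h-h^2\delta}$. No gaps.
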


Remarkably, the Poincar\'{e} constant in this result is uniformly bounded over small values of the step size $\delta$.

The proofs of Theorem \ref{thm:PoincareInequality} and Corollary \ref{cor:Poincare} will be presented in Section \ref{sec: proof_local_Poincare}.

\section{Proofs of Theorem \ref{thm:rhoContraction} and Corollary \ref{cor: WContraction}}\label{sec: proofForOneParticle}
Let us begin by outlining the ranges of values for $\delta$ and $T$ that are sufficient for proving Theorem \ref{thm:rhoContraction} and Corollary \ref{cor: WContraction}. In the entirety of this section, Assumptions~\ref{hypo:1} and \ref{hypo:2} are enforced and we will be working with a function $\kappa : \mathbb{R}^d \to \mathbb{R}$ whose existence is guaranteed by Proposition~\ref{prop:existence_kappa}. 
In particular, we fix $a\in (12K,\infty)$ and $L \in (0,c/6)$. 
The range of $T$ is defined as follows. 
\begin{equation}\label{eq: T_0}
        T \geq T_0 := \max \{T_1,T_2,T_3\}
    \end{equation}
    where
    \begin{align*}\label{eq: T123}
	&T_1 := \frac{2 \lVert\nabla \kappa \rVert_\infty  \sup_{\lvert y \rvert \leq R} \lvert b(y)\rvert}{a}\\
	&T_2:= \frac{2 \lVert\nabla \kappa \rVert_\infty \sup_{\lvert y \rvert \leq \bar{R}} \lvert b(y)\rvert}{L}\\
	&T_3:= 2 \lVert \kappa \rVert_\infty
    \end{align*}
    with
    \begin{equation}
        \bar{R}:=\frac{R_*+\delta \lvert b(0)\rvert}{1-\delta L_b},
    \end{equation}
    where we recall that $R_*$ is defined in Proposition~\ref{prop:existence_kappa}. Bounds for $\lVert \kappa \rVert_\infty$ and  $\lVert \nabla \kappa \rVert_\infty$ are provided in \eqref{eq:bound_kappa_with_epsilon} and \eqref{eq:bound_kappa}. The range of $\delta$ is defined as follows.
\begin{equation}\label{eq: delta0}
0<\delta \leq \delta_0:= \min (\delta_1, \delta_2,\delta_3,\delta_4)
\end{equation}
where
\begin{equation}\label{eq:delta1234}
    \delta_1:= \frac{1}{L_b} \quad \delta_2:=\frac{c}{L_b^2} \quad \delta_3:= \frac{K}{L_b^2} \quad \delta_4 := \frac{R^2 d \Gamma(\frac{d}{2})}{8T(d+2) \left(d \Gamma(\frac{d}{2}) +2T\Gamma(\frac{d+2}{2})\right)}\,.
\end{equation}
In order to establish the validity of Theorem \ref{thm:rhoContraction}, we rely on the following proposition.

\begin{proposition}\label{prop: properties_kappa_b}
Let $Z$ be a $d$-dimensional standard normal random variable. Suppose Assumptions \ref{hypo:1} and \ref{hypo:2} hold and let $\kappa$, $a$ and $L$ be as in Proposition \ref{prop:existence_kappa}. Suppose further $T\geq\max\{T_1, T_2\}$ and $0<\delta \leq \delta_1$. Then, the following holds.\\
i) For all $x\in \mathbb{R}^d$ with $\lvert x \rvert \leq R$, we have
\begin{equation}\label{eq:bound_a}
   \mathbb{E}\left[\kappa \left(x+ \delta b(x) + \sqrt{2 \delta T}Z\right)\right] \leq \kappa(x) -\frac{a}{2} \delta T.
\end{equation}
ii) For all $x\in \mathbb{R}^d$, we have
\begin{equation}\label{eq:bound_L}
    \mathbb{E}\left[\kappa \left(x+ \delta b(x) + \sqrt{2 \delta T}Z\right)\right] \leq \kappa(x) + \frac{3}{2}L \delta T.
\end{equation}
\end{proposition}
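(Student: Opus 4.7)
The plan is to reduce the desired bounds, which include the deterministic drift $\delta b(x)$, to the already-established bounds of Proposition~\ref{prop:existence_kappa}, which only involve the Gaussian step $\sqrt{2\delta T}Z$. Since $\kappa \in C^1$ with compact support (items (1) and (2) of Proposition~\ref{prop:existence_kappa}), $\|\nabla\kappa\|_\infty < \infty$, and the fundamental theorem of calculus gives the deterministic pointwise bound
\begin{equation*}
\kappa\bigl(x+\delta b(x)+\sqrt{2\delta T}Z\bigr) \leq \kappa\bigl(x+\sqrt{2\delta T}Z\bigr) + \delta\, |b(x)|\, \|\nabla\kappa\|_\infty.
\end{equation*}
Taking expectations reduces everything to controlling $|b(x)|$ on an appropriate set and absorbing the resulting error into the slack between $a\delta T$ and $\tfrac{a}{2}\delta T$ (respectively between $L\delta T$ and $\tfrac{3L}{2}\delta T$) by requiring $T$ to be large.

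For part i), $|x|\leq R$ is immediate, so $|b(x)| \leq \sup_{|y|\leq R}|b(y)|$. Combining the interpolation inequality with \eqref{eq:condition_bound_a} from Proposition~\ref{prop:existence_kappa}(3) yields
\begin{equation*}
\mathbb{E}\bigl[\kappa(x+\delta b(x)+\sqrt{2\delta T}Z)\bigr] \leq \kappa(x) - a\delta T + \delta\, \|\nabla\kappa\|_\infty \sup_{|y|\leq R}|b(y)|,
\end{equation*}
and the condition $T\geq T_1$ is precisely what forces the last term to be at most $\tfrac{a}{2}\delta T$.

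Part ii) is more delicate because $|b(x)|$ grows with $|x|$; it requires a split into two regimes governed by $\bar R = (R_* + \delta |b(0)|)/(1-\delta L_b)$, which is well-defined thanks to $\delta \leq \delta_1 = 1/L_b$. In the inner regime $|x|\leq \bar R$, the same interpolation argument applies, now using \eqref{eq:condition_bound_L} from Proposition~\ref{prop:existence_kappa}(4), and $T\geq T_2$ ensures the extra drift term is at most $\tfrac{L}{2}\delta T$, giving the $\tfrac{3L}{2}\delta T$ total. In the outer regime $|x|\geq \bar R$, the key observation is that Lipschitz continuity of $b$ implies
\begin{equation*}
|x+\delta b(x)| \geq (1-\delta L_b)|x| - \delta|b(0)| \geq R_*,
\end{equation*}
so the deterministic shift $y:=x+\delta b(x)$ already lies outside the support of $\kappa$; in particular $\kappa(y)=0=\kappa(x)$, and applying \eqref{eq:condition_bound_L} at $y$ directly gives $\mathbb{E}[\kappa(y+\sqrt{2\delta T}Z)] \leq L\delta T \leq \tfrac{3L}{2}\delta T$ with no interpolation needed.

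The only genuine obstacle is ii): a naive global application of the interpolation bound fails because $|b(x)|$ is unbounded, but the compact support of $\kappa$ (together with Lipschitzness of $b$ and the smallness of $\delta$) rescues the argument by making the shifted point $x+\delta b(x)$ automatically lie outside $\{\kappa\neq 0\}$ whenever $|x|$ is large. The threshold $\bar R$, and hence the definition of $T_2$, arise exactly from balancing these two regimes.
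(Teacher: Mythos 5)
Your proposal is correct and follows essentially the same route as the paper: a mean-value/gradient bound to peel off the drift term $\delta b(x)$, Proposition~\ref{prop:existence_kappa} (3)--(4) for the Gaussian step, the thresholds $T_1,T_2$ to absorb the error, and for ii) the split at $\bar R$ so that when $|x|\geq \bar R$ the shifted point $x+\delta b(x)$ lies outside the support of $\kappa$ and \eqref{eq:condition_bound_L} applies directly. The only cosmetic difference is that you split ii) into $|x|\leq\bar R$ versus $|x|\geq\bar R$ for all $x$, whereas the paper treats $|x|\leq R$ via part i) and then splits $|x|\geq R$; the content is identical.
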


\begin{proof}
Let $x\in \mathbb{R}^d$ with $\lvert x \rvert \leq R$. Due to the mean-value theorem and condition \eqref{eq:condition_bound_a}, we have
\begin{align*}
    \mathbb{E}\left[\kappa \left(x+ \delta b(x) + \sqrt{2 \delta T}Z\right) \right] &\leq \mathbb{E}\left[\kappa \left(x+ \sqrt{2\delta T}Z \right)\right] + \delta \lVert \nabla \kappa \rVert_\infty \sup_{\lvert y \rvert \leq R} \lvert b(y)\rvert  \\
    & \leq \kappa(x) - a \delta T + \delta\lVert \nabla \kappa \rVert_\infty \sup_{\lvert y \rvert \leq R} \lvert b(y)\rvert \,.
\end{align*}
Since $T \geq T_1$, we obtain \eqref{eq:bound_a}. Now that the first item is proved, we take $x\in \mathbb{R}^d$ such that $\lvert x \rvert \geq R$ and we consider two cases.
\begin{itemize}
    \item For the case $|x|\geqslant \bar R$, we first observe that by the triangle inequality and the Lipschitz condition of $b$, we have
    \begin{equation*}
	\lvert x + \delta b(x) \rvert \geq  \lvert x \rvert  - \delta \lvert b(x)\rvert 
	 \geq \lvert x \rvert - \delta L_b \lvert x \rvert -\delta \lvert b(0)\rvert
	 =(1-\delta L_b) \lvert x \rvert - \delta \lvert b(0)\rvert.
\end{equation*}
Hence, when $\delta < \delta_1$ we have
\begin{equation}\label{eq:greatthanRstar}
  \lvert x \rvert \geq \bar{R}  \Rightarrow \lvert x + \delta b(x)   \rvert \geq R_*.
\end{equation}
Since $\kappa(y)=0$ for $|y|\geqslant R_*$, for $|x|\geqslant \bar R$, we simply apply \eqref{eq:condition_bound_L} (with $x$ replaced by $x+\delta b(x)$) to get
\[    \mathbb{E}\left[\kappa (x+\delta b(x) + \sqrt{2 \delta T}Z)\right] \leq \kappa(x + \delta b(x)) + L \delta T  = L \delta T = \kappa(x) +L \delta T,\]
where in the last step we used the fact that $R_*<\bar{R}$.
 \item
 For the case $ |x|< \bar R$, 
we use \eqref{eq:condition_bound_L} to obtain
\begin{align*}
       \mathbb{E}\left[\kappa \left(x+ \delta b(x) + \sqrt{2 \delta T}Z\right)\right] &\leq \mathbb{E}\left[\kappa\left(x+ \sqrt{2\delta T}Z\right)\right] + \delta \lVert \nabla \kappa \rVert_\infty \sup_{\lvert y \rvert \leq \bar{R}}\lvert b(y)\rvert\\
       & \leq \kappa(x) + L \delta T + \delta \lVert \nabla \kappa \rVert_\infty \sup_{\lvert y \rvert \leq \bar{R}}\lvert b(y)\rvert.
\end{align*}
Since we assume $T \geq T_2$, we obtain \eqref{eq:bound_L}, which concludes the proof of the proposition. 
\end{itemize}
\end{proof}
We will now apply Proposition \ref{prop: properties_kappa_b} to prove Theorem \ref{thm:rhoContraction}.
\begin{proof}[Proof of Theorem \ref{thm:rhoContraction}]
Let $x$ and $y$ be any two points in $\mathbb{R}^d$. We consider two Markov chains $(X_k)_{k\geq 0}$ and $(Y_k)_{k\geq 0}$ that start at $x$ and $y$ respectively and use the same Gaussian variables at each step. In other words, these chains are defined as follows: for $k \in \mathbb{N}$,
\begin{align*}
    X_{k+1} &= X_k + \delta b(X_k) + \sqrt{2\delta T} Z_k \\
    Y_{k+1} &= Y_k + \delta b(Y_k) + \sqrt{2\delta T} Z_k .
\end{align*}
 By the definition of $X_1$ and $Y_1$, we have
\begin{multline*}
    \mathbb{E} \left[\rho(X_1,Y_1) \right] = \mathbb{E} \left[\lvert x+\delta b(x) -y - \delta b(y)\rvert^2 (T +\kappa (X_1)+ \kappa(Y_1))\right] \\
     = \left( \lvert x- y\rvert^2 +2(x-y) \cdot \delta(b(x)-b(y))+ \delta^2 \lvert b(x) -b(y) \rvert^2 \right) \mathbb{E}\left[T +\kappa (X_1)+ \kappa(Y_1)\right].
\end{multline*}
\begin{itemize}
    \item 
When $\lvert x \rvert \geq R$ or $\lvert y \rvert \geq R$, by using first Assumption \ref{hypo:1} and \ref{hypo:2} and then \eqref{eq:bound_L} we have
\begin{eqnarray}
    \mathbb{E}[\rho(X_1,Y_1)] & \leq &  (1-2c \delta +L_b^2 \delta^2)  \left (T +\mathbb{E}[\kappa(X_1)] + \mathbb{E}[\kappa(Y_1)] \right)\lvert x-y \rvert^2 \nonumber\\
    &\leq& (1-2c \delta +L_b^2 \delta^2)  (T +\kappa(x) + \kappa(y) +3L \delta T )\lvert x-y \rvert^2 \nonumber\\
    &=& (1-2c \delta +L_b^2 \delta^2)\left(1+ \frac{3L\delta T}{T + \kappa(x) +\kappa(y)}\right) \rho(x,y)\nonumber\\
    & \leq & (1-2c \delta +L_b^2 \delta^2)\left(1+ 3L \delta \right)\rho(x,y)\,,\label{eq:to_modify_L}
\end{eqnarray}
where we used that  $\kappa$ is positive.
\item
If $\lvert x \rvert \leq R$ and $\lvert y \rvert \leq R$, using similarly Assumptions \ref{hypo:1} and \ref{hypo:2} and then \eqref{eq:bound_a} yields
\begin{eqnarray}
    \mathbb{E}[\rho(X_1,Y_1)] &\leq& (1+2 K \delta + L_b^2\delta^2)  \left (T +\mathbb{E}[\kappa(X_1) + \mathbb{E}[\kappa(Y_1)] \right)\lvert x-y \rvert^2\nonumber\\
    &\leq &(1+2K\delta + L_b^2\delta^2)  ( T +\kappa(x) +\kappa(y) -a\delta T)\lvert x-y \rvert^2\nonumber\\
    &=& (1+2 K \delta +  L_b^2\delta^2) \left(1- \frac{a\delta T}{T +  \kappa(x) + \kappa(y)} \right)\rho(x,y)\nonumber \\
    & \leq & (1+2 K \delta+  L_b^2 \delta^2) \left (1- \frac{1}{2} a\delta \right)\rho(x,y)\,,\label{eq:to_modify_a}
\end{eqnarray}
where we used in the last line that  $\kappa$ is bounded and $T \geq T_3$.
\end{itemize}
In view of \eqref{eq:to_modify_L} and \eqref{eq:to_modify_a}, it remains to determine $h>0$ such that, for all $\delta $ in the considered range,
\begin{equation*} 
    \max\left\{(1-2c \delta +L_b^2 \delta^2)\left(1+ 3L \delta \right) \ , \ (1+2 K \delta +  L_b^2 \delta^2 ) \left (1- \frac{1}{2} a\delta \right)\right \} < 1 - h \delta\,.
\end{equation*}
Thanks to $\delta \leq c/L_b^2$ and $L \leq c/6$ (cf.\ Proposition \ref{prop:existence_kappa}), we have
\begin{equation*}
    (1-2c \delta +L_b^2 \delta^2)\left(1+ 3L \delta \right) \leqslant (1-c \delta )\left(1+ c \delta/2 \right)
\end{equation*}
and thanks to $\delta \leq K/L_b^2$ and $a \geq 12K$, we have
\begin{equation*}
    (1+2 K \delta +  L_b^2 \delta^2 ) \left (1- \frac{1}{2} a\delta \right) \leqslant \left(1+\frac14\delta a\right )\left(1-\frac{1}{2} a \delta\right).
\end{equation*}
Thus we only to have to bound $(1-x)(1+x/2)$ for $x \geq 0$, and we simply see that
\begin{equation*}
(1-x)(1+x/2) = 1 - x/2 - x^2/2 \leqslant 1-x/2
\end{equation*}
which implies that we have for all $x,y \in \mathbb{R}^d$
\begin{equation*}
    \mathbb{E}_{x,y}[\rho(X_1,Y_1)] \leq (1-h\delta)\rho(x,y),
\end{equation*}
where we could choose 
\begin{equation}\label{eq:h}
    h =\min(c/2,a/4).
\end{equation}
By a standard argument (see, for example, \cite[Lemma 2.1]{EberleMajka2019}), this gives
\begin{equation*}
    \mathcal{W}_{1,\rho} \left(\mu Q, \nu Q\right) \leq (1-h\delta ) \mathcal{W}_{1,\rho}(\mu , \nu ) \quad \text{ for all } \mu,\nu \in \mathcal{P}(\mathbb{R}^d).
\end{equation*}
\end{proof}
\begin{proof}[Proof of Corollary \ref{cor: WContraction}]
    Let $\mu$ and $\nu$ be any two measures in $\mathcal{P}(\mathbb{R}^d)$. 
    For a $k\in \mathbb{N}$, using first that $\kappa \geq 0$, then Theorem \ref{thm:rhoContraction} and finally that $\kappa$ is bounded, we get 
    \begin{align*}
        T \mathcal{W}_2 \left(\mu Q^k, \nu Q^k\right) &\leq \mathcal{W}_{1,\rho} \left(\mu Q^k, \nu Q^k \right)\\
        &\leq (1-h \delta )^k \mathcal{W}_{1,\rho}(\mu,\nu)\\
        & \leq (1-h \delta )^k(T + 2 \lVert \kappa \rVert_\infty) \mathcal{W}_2(\mu,\nu).
    \end{align*}
     Dividing both sides by $T$, 
     we conclude that the transition kernel $Q$ induces a weak $L^2$-Wasserstein contraction with the rate of convergence contraction $h \delta$, and
    \begin{equation}\label{eq: explicit_M}
    M:= \left(1+\frac{2\lVert \kappa \rVert_\infty}{T} \right).
    \end{equation}
\end{proof}

\section{Proof of Proposition \ref{prop:existence_kappa}}\label{sec:existenceKappa}
In this section, we aim to construct a function $\kappa$ that satisfies the conditions specified in Proposition \ref{prop:existence_kappa}. 
To this end, we first prove two auxiliary results.
\begin{proposition}\label{prop:eta_1}
Let $\alpha_1, \beta_1,R,T \in \mathbb{R}_+$. Suppose $\eta_1 : \mathbb{R}^d \to \mathbb{R}$ satisfies the following two conditions:
\begin{itemize}
    \item $\eta_1 (x) = \alpha_1 -\beta_1 \lvert x \rvert^2 , \quad \text{ for all } x \in \mathbb{R}^d \text{ with } \lvert x \rvert \leq 2R$,\\
    \item $\eta_1$ is bounded with $\lVert \eta_1\rVert_\infty = \alpha_1$.
\end{itemize}
If $\delta \in (0, \delta_4)$, with $\delta_4$ defined in \eqref{eq: delta0}, then for all $x \in \mathbb{R}^d$ with $\lvert x \rvert \leq R$, 
\begin{equation}\label{eq:inequality_with_beta1}
     \quad \mathbb{E}\left[\eta_1\left(x+ \sqrt{2 \delta T}Z\right)\right] \leq \eta_1(x) - \beta_1 d \delta T,
\end{equation}
where $Z$ is a random vector which has the standard normal distribution on $\mathbb{R}^d$. In particular, we have \eqref{eq:condition_bound_a} (for $\kappa=\eta_1$) by taking $\beta_1 = \frac{a}{d}$.
\end{proposition}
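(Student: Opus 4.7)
The plan is to decompose $\mathbb{E}[\eta_1(W)]$, where $W := x + \sqrt{2\delta T} Z$, according to whether $W$ lies in the ball where $\eta_1$ has its explicit quadratic form. Let $A := \{|W| \leq 2R\}$. On $A$ one has $\eta_1(W) = \alpha_1 - \beta_1 |W|^2$, while on $A^c$ the uniform bound $\eta_1 \leq \|\eta_1\|_\infty = \alpha_1$ gives $\eta_1(W) \leq \alpha_1$. Combining these with $\mathbb{E}[|W|^2] = |x|^2 + 2d\delta T$ and $\eta_1(x) = \alpha_1 - \beta_1 |x|^2$ (valid since $|x| \leq R \leq 2R$), I would obtain
\begin{equation*}
\mathbb{E}[\eta_1(W)] \;\leq\; \alpha_1 - \beta_1 \mathbb{E}\bigl[|W|^2 \1_A\bigr] \;=\; \eta_1(x) - 2 \beta_1 d \delta T + \beta_1 \mathbb{E}\bigl[|W|^2 \1_{A^c}\bigr].
\end{equation*}
The problem thus reduces to showing $\mathbb{E}[|W|^2 \1_{A^c}] \leq d \delta T$; the remaining half of the Gaussian gain $2\beta_1 d\delta T$ then absorbs the residual, delivering the claimed $-\beta_1 d\delta T$.

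To bound the error term, I would use that for $|x| \leq R$ the triangle inequality forces $A^c \subseteq B := \{ \sqrt{2\delta T}\,|Z| > R \}$. Since $\1_B$ depends only on $|Z|$, rotational symmetry of the standard Gaussian implies $\mathbb{E}[Z \1_B] = 0$ and hence kills the cross term when one expands $|W|^2 = |x|^2 + 2\sqrt{2\delta T}\langle x, Z\rangle + 2\delta T |Z|^2$, giving
\begin{equation*}
\mathbb{E}\bigl[|W|^2 \1_{A^c}\bigr] \;\leq\; \mathbb{E}\bigl[|W|^2 \1_B\bigr] \;=\; |x|^2 \,\mathbb{P}(B) + 2 \delta T \, \mathbb{E}\bigl[|Z|^2 \1_B\bigr].
\end{equation*}
Both tail quantities can then be controlled by Markov's inequality applied to $|Z|^4$, together with the moment identity $\mathbb{E}[|Z|^{2k}] = 2^k \Gamma((d+2k)/2)/\Gamma(d/2)$, which introduces exactly the Gamma-function values appearing in $\delta_4$.

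The main technical point is the bookkeeping of constants. After substituting $r^2 = R^2/(2 \delta T)$ into the Markov estimates and using $|x|^2 \leq R^2$, one obtains an upper bound of order $\delta^2 T^2 / R^2$ whose coefficient, once expressed via $\Gamma(d/2)$ and $\Gamma((d+2)/2)$, reproduces the denominator in the definition \eqref{eq:delta1234} of $\delta_4$. Requiring this bound to be at most $d \delta T$ then yields precisely $\delta \leq \delta_4$, closing the argument. The final assertion that $\eta_1$ may serve as $\kappa$ in \eqref{eq:condition_bound_a} with $\beta_1 = a/d$ is then immediate, since $\beta_1 d \delta T = a \delta T$.
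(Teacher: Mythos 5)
Your argument is correct and is essentially the paper's proof: the same split over the event $\{\lvert x+\sqrt{2\delta T}Z\rvert\leq 2R\}$ versus its complement, the same use of $\lVert\eta_1\rVert_\infty=\alpha_1$ off that ball, the same inclusion of the bad event into $\{\lvert Z\rvert\geq R/\sqrt{2\delta T}\}$, and the same fourth-moment Markov estimates. The only (harmless) differences are cosmetic: you cancel the cross term by symmetry and bound $\mathbb{E}[\lvert Z\rvert^2\mathds{1}_B]$ via $\lvert Z\rvert^2\mathds{1}_{\lvert Z\rvert> r}\leq\lvert Z\rvert^4/r^2$, where the paper instead uses $(a+b)^2\leq 2a^2+2b^2$ and the chi-square density, so your bookkeeping actually yields a slightly larger admissible threshold than $\delta_4$ rather than ``precisely'' $\delta_4$ --- which still makes the hypothesis $\delta\leq\delta_4$ amply sufficient.
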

\begin{proof}
Take $x \in \mathbb{R}^d$ such that $\lvert x \rvert \leq R$. By the definition of $\eta_1$, we have 
\begin{align*}
    \mathbb{E}\left[\eta_1\left(x + \sqrt{2\delta T} Z\right)\right] &= \mathbb{E} \left[ \alpha_1 - \beta_1 \left\lvert x + \sqrt{2\delta T} Z \right\rvert^2 \right]\\
    &\underbrace{+\mathbb{E}\left[\left(\eta_1\left(x + \sqrt{2\delta T} Z\right) -\alpha_1 + \beta_1 \left\lvert x + \sqrt{2\delta T} Z \right\rvert^2  \right) \mathds{1}_{\left\lvert x + \sqrt{2\delta T} Z \right \rvert \geq 2R} \right]}_{:=(*)}\\
    &= \alpha_1 - \beta_1 \mathbb{E} \left[ \lvert x \rvert^2  + 2 \sqrt{2 \delta T} x \cdot Z + 2 \delta T \lvert Z \rvert^2 \right] + (*)\\
    &=  \eta(x) -  2\beta_1 d \delta T +(*)\,.
\end{align*}
We will now demonstrate that $(*) \leqslant \beta_1 d \delta T$ for all $\delta \in (0,\delta_4)$. Since $\lVert \eta_1 \rVert_\infty = \alpha_1$,
\begin{align}
     (*) 
    & \leq   \beta_1  \mathbb{E} \left[\left\lvert x + \sqrt{2\delta T} Z \right\rvert^2    \mathds{1}_{\left \lvert x + \sqrt{2\delta T} Z \right \rvert \geq 2R} \right]\nonumber\\
    &\leq 2  \beta_1  \mathbb{E} \left[ \left(  \lvert x \rvert^2 +2  \delta T \lvert Z \rvert^2   \right)\mathds{1}_{\left  \lvert x + \sqrt{2\delta T} Z \right \rvert \geq 2R} \right]\nonumber\\
    & \leq 2 \beta_1 R^2  \underbrace{\mathbb{P} \left(\lvert Z \rvert \geq \frac{R }{\sqrt{2\delta T}} \right)}_{(\star)}+ 4 \beta_1 \delta T \underbrace{\mathbb{E} \left[ \lvert Z \rvert^2\mathds{1}_{\lvert Z \rvert \geq \frac{R }{\sqrt{2\delta T}}} \right]}_{(\star \star)}\,,\label{eq:minous_add_star}
\end{align}
where we used that $|x|\leqslant R$ and that $\left\lvert x + \sqrt{2\delta T} Z \right\rvert \geq 2R$ implies $\lvert Z \rvert \geq \frac{R}{\sqrt{2\delta T}}$. 
By using the Markov Inequality, we get
\begin{equation}\label{eq:gammabound1}
(\star) \leq \frac{4 \delta^2 T^2 \mathbb{E}[\lvert Z \rvert^4]}{R^4}=\frac{4\delta^2T^2d(d+2)}{R^4}.
\end{equation}
We apply Markov's inequality with the fourth moment of $Z$, because it allows us to obtain a squared $\delta$ term on the right-hand side, which will be useful in subsequent steps. For the term $(\star \star)$, we note that $\lvert Z \rvert^2$ follows the chi-square distribution with $d$ degrees of freedom. Therefore, we can write
\[
    (\star \star) = \int_{\frac{R^2}{2\delta T}}^\infty  x \frac{1}{2^{\frac{d}{2}} \Gamma(\frac{d}{2})} x^{\frac{d}{2}-1} e^{-\frac{x}{2}}dx\\
    =\frac{2\Gamma(\frac{d+2}{2})}{\Gamma(\frac{d}{2})} \mathbb{P}\left(U \geq \frac{R^2}{2\delta T}\right),
\]
where $U$ has the chi-square distribution with $d+2$ degrees of freedom. Applying the Markov Inequality and using $\mathbb{E}[|U|] = d+2$, we have 
\begin{equation}\label{eq:gammabound2}
    (\star \star) \leq \frac{4 (d+2) \delta T \Gamma(\frac{d+2}{2})}{R^2\Gamma(\frac{d}{2})}.
\end{equation}
Using \eqref{eq:gammabound1} and  \eqref{eq:gammabound2} in \eqref{eq:minous_add_star} gives
\[
   (*) \leq \frac{8\beta_1 \delta^2 T^2 d(d+2)}{R^2} +\frac{16\beta_1\delta^2 T^2 (d+2) \Gamma(\frac{d+2}{2})}{R^2 \Gamma(\frac{d}{2})}\,,\]
   which is less than $\beta_1 d\delta T$ when $\delta \leqslant \delta_4$, which concludes the proof. 
\end{proof}

\begin{proposition}\label{prop:eta_2}
Suppose $\eta_2 : \mathbb{R}^d \to \mathbb{R}$ is a function for which there exists $\beta_2 >0$ such that $f(x):= \eta_2(x) -\beta_2 \lvert x \rvert^2$ is concave. Then, for any $T,\delta>0$, considering $Z$ a standard $d$-dimensional Gaussian variable, $\eta_2$ satisfies 
\begin{equation*}
    \mathbb{E}\left[\eta_2\left(x+\sqrt{2 \delta T}Z\right)\right] \leq \eta_2(x) + 2 \beta_2 d \delta T \text{ for all } x \in \mathbb{R}^d. 
\end{equation*}
In particular, we have \eqref{eq:condition_bound_L} (for $\kappa=\eta_2$) by taking $\beta_2 = \frac{L}{2d}$.
\end{proposition}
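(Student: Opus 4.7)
The plan is to split $\eta_2$ into its concave and quadratic components and handle each contribution separately. Writing $\eta_2(y) = f(y) + \beta_2 |y|^2$ with $f$ concave by hypothesis, linearity of expectation gives
\begin{equation*}
\mathbb{E}\left[\eta_2\left(x + \sqrt{2\delta T}Z\right)\right] = \mathbb{E}\left[f\left(x + \sqrt{2\delta T}Z\right)\right] + \beta_2\,\mathbb{E}\left[\left|x + \sqrt{2\delta T}Z\right|^2\right]\,.
\end{equation*}

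For the concave part, the natural tool is Jensen's inequality. Since $Z$ is centred, $\mathbb{E}[x + \sqrt{2\delta T}Z] = x$, and concavity of $f$ yields $\mathbb{E}[f(x + \sqrt{2\delta T}Z)] \leq f(x)$. For the quadratic part, I simply expand the square: using $\mathbb{E}[Z] = 0$ and $\mathbb{E}[|Z|^2] = d$, I get $\mathbb{E}[|x + \sqrt{2\delta T}Z|^2] = |x|^2 + 2d\delta T$. Summing the two bounds,
\begin{equation*}
\mathbb{E}\left[\eta_2\left(x + \sqrt{2\delta T}Z\right)\right] \leq f(x) + \beta_2|x|^2 + 2\beta_2 d \delta T = \eta_2(x) + 2\beta_2 d \delta T\,,
\end{equation*}
which is the desired inequality. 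The final claim about \eqref{eq:condition_bound_L} is then immediate: setting $\beta_2 = L/(2d)$ makes $2\beta_2 d \delta T = L \delta T$.

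There is essentially no obstacle here; the only minor subtlety is the justification of Jensen's inequality, which requires $f(x + \sqrt{2\delta T}Z)$ to be integrable. This is implicit in the statement, since otherwise the left-hand side would not be well defined, and in the actual construction of $\kappa$ the function $\eta_2$ will be chosen bounded, so that $f(y) = \eta_2(y) - \beta_2|y|^2$ is bounded above by a non-positive quadratic, making the expectation finite and Jensen applicable in its standard form.
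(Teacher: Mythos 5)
Your proof is correct and follows essentially the same route as the paper: Jensen's inequality applied to the concave function $f(y)=\eta_2(y)-\beta_2|y|^2$, combined with the explicit computation $\mathbb{E}\bigl[|x+\sqrt{2\delta T}Z|^2\bigr]=|x|^2+2d\delta T$. The explicit decomposition of $\eta_2$ into concave plus quadratic parts is just a cosmetic rearrangement of the paper's argument, and your remark on integrability is a harmless extra.
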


\begin{proof}
By Jensen's inequality, we have
\begin{equation*}
    \mathbb{E}\left[\eta_2\left(x + \sqrt{2\delta T}Z\right) - \beta_2 \left\lvert x + \sqrt{2\delta T}Z \right \rvert^2 \right] \leq \eta_2(x) - \beta_2 \lvert x \rvert^2.
\end{equation*}
Hence we have 
\begin{align*}
     \mathbb{E}\left[\eta_2\left(x + \sqrt{2\delta T}Z\right)\right] &\leq \eta_2(x) - \beta_2 \lvert x \rvert^2 + 
     \mathbb{E}\left[\beta_2 \left\lvert x+ \sqrt{2\delta T}Z \right\rvert^2 \right]\\
     &= \eta_2(x) + 2 \beta_2 d \delta T .
\end{align*}
\end{proof}

\begin{proof}[Proof of Proposition \ref{prop:existence_kappa}]
We need to construct a function $\kappa$ that satisfies items (1)-(4). Based on Propositions \ref{prop:eta_1} and \ref{prop:eta_2}, we see that it is sufficient to verify the following conditions (1')-(5'):
\begin{enumerate}
    \item[(1')] $\kappa$ is $\mathcal{C}^1(\mathbb{R}^d)$.
    \item[(2')] There exists $R_*>R$ such that 
    \begin{equation*}
        \kappa(x) = 0 \text{ and } \nabla \kappa(x) =0, \quad \forall x \in \mathbb{R}^d \text{ with }\lvert x \rvert \geq R_*
    \end{equation*}
    \item[(3')] $\kappa(x) = \alpha_1-\frac{a}{d} \lvert x \rvert^2$ for $x$ such that $\lvert x \rvert \leq 2R$.
    \item[(4')] $\lVert \kappa \rVert_\infty = \alpha_1$ and $\kappa\geq0$.
    \item[(5')]  $\kappa(x) - \frac{L}{2d}\lvert x \rvert^2$ is concave.
\end{enumerate}
To this end, we define $\kappa$ as follows, taking $0<\epsilon< \frac{L}{2d} $
\begin{equation}
    \kappa(x):= \begin{cases}
        \alpha_1 - \frac{a}{d} \lvert x \rvert^2 \quad &\text{if } 0\leq \lvert x \rvert \leq 2R\\
        \left(\frac{L}{2d}-\epsilon\right) (\lvert x \rvert - \alpha_2)^2 \quad &\text{if } 2R< \lvert x \rvert \leq R_*\\
        0 \quad &\text{if } \lvert x \rvert > R_*
    \end{cases}
\end{equation}
and we choose $\alpha_1$, $\alpha_2$, and $R_*$ so that $\kappa$ belongs to $\mathcal{C}^1(\mathbb{R}^d)$. The explicit values of $\alpha_1$, $\alpha_2$ and $R_*$ can be found by solving the following equations
\begin{equation}
    \begin{cases}
        \left. \nabla \left( \alpha_1 - \frac{a}{d} \lvert x\rvert^2 \right)\right|_{x\in \mathbb{R}^d, \lvert x \rvert =2R}= \left.\nabla \left ( \left(\frac{L}{2d}-\epsilon \right) (\lvert x \rvert-\alpha_2)^2\right)\right|_{x\in \mathbb{R}^d, \lvert x \rvert =2R}\\
        \left. \left( \alpha_1 - \frac{a}{d} \lvert x\rvert^2 \right)\right|_{x\in \mathbb{R}^d, \lvert x \rvert =2R}= \left.\left ( \left(\frac{L}{2d}-\epsilon \right) (\lvert x \rvert-\alpha_2)^2\right)\right|_{x\in \mathbb{R}^d, \lvert x \rvert =2R}\\
        \left.\nabla \left ( \left(\frac{L}{2d}-\epsilon \right) (\lvert x \rvert-\alpha_2)^2\right)\right|_{x\in \mathbb{R}^d, \lvert x \rvert =R_*} = 0
    \end{cases}.
\end{equation}
We get
\begin{equation}
    \alpha_1 = 4aR^2\left(\frac{1}{d}+\left(\frac{L}{2d}-\epsilon \right)\frac{4a}{(L-2d\epsilon)^2}\right)  \quad \text{and} \quad  R_* = \alpha_2 = 2R\left(1+\frac{2a}{L-2d\epsilon}\right).
\end{equation}
Hence the explicit definition is 
\begin{equation}\label{eq: kappa}
    \kappa(x) = \begin{cases}4aR^2\left(\frac{1}{d}+\left(\frac{L}{2d}-\epsilon \right)\frac{4a}{(L-2d\epsilon)^2}\right) - \frac{a}{d} \lvert x \rvert^2 \quad &\text{if } 0\leq \lvert x \rvert \leq 2R\\
    \left( \frac{L}{2d}-\epsilon\right) \left(\lvert x \rvert - 2R\left(1+\frac{2a}{L-2d\epsilon}\right) \right)^2 \quad &\text{if } 2R< \lvert x \rvert \leq R_*\\
    0 \quad &\text{if } \lvert x \rvert > R_*
    \end{cases}
\end{equation}
where $ R_* = 2R\left(1+\frac{2a}{L-2d\epsilon}\right)$. We also have 
\begin{equation}\label{eq:bound_kappa_with_epsilon}
\lVert \kappa \lVert_\infty = 4aR^2\left(\frac{1}{d}+\left(\frac{L}{2d}-\epsilon \right)\frac{4a}{(L-2d\epsilon)^2}\right) \quad \text{and} \quad \lVert \nabla \kappa \rVert_\infty = \frac{4 a R}{d}.
\end{equation}
We can now confirm that the function $\kappa$, defined in equation \eqref{eq: kappa}, satisfies (1')-(4') by definition. Additionally, we will demonstrate that it also satisfies (5'). Consider the following function $g:\mathbb{R}_+ \to \mathbb{R}$ defined as 
\begin{equation}
    g(y) = \begin{cases}4aR^2\left(\frac{1}{d}+\left(\frac{L}{2d}-\epsilon \right)\frac{4a}{(L-2d\epsilon)^2}\right) - \frac{a}{d} y^2 -\frac{L}{2d}y^2\quad & y \in [0,2R]\\
    \left( \frac{L}{2d}-\epsilon\right) \left(y - 2R\left(1+\frac{2a}{L-2d\epsilon}\right) \right)^2 -\frac{L}{2d}y^2\quad & y \in [2R,R_*]\\
    -\frac{L}{2d}y^2 \quad & y \in [R_*,\infty)
    \end{cases}
\end{equation}
and function $h: \mathbb{R}^d \to \mathbb{R}_+$ defined as $h(x) = \lvert x \rvert$. From the equation $\kappa(x) - \frac{L}{2d}\lvert x \rvert^2 = g(h(x))$, we can observe that if $g$ is both non-increasing and concave, then $\kappa(x) - \frac{L}{2d}\lvert x \rvert^2 $ is also concave, as shown in \cite[Section 3.2.4]{boyd2004convex}. To verify that $g$ satisfies these conditions, we differentiate it as follows
\begin{equation}
    g'(y) = \begin{cases} -2\left( \frac{a}{d} + \frac{L}{2d}\right) y \quad & y \in [0,2R]\\
    - 2\epsilon y - 2 \left( \frac{L}{2d} - \epsilon \right) 2R (1+\frac{2a}{L-2d\epsilon})\quad & y \in [2R,R_*]\\
    -\frac{L}{d} y \quad & y \in [R_*,\infty).
    \end{cases}
\end{equation}
Since $g'(x)$ is negative for all $x$ by assumption, $g$ is a decreasing function. Moreover, the fact that $g'$ decreases implies that $g$ is concave. Therefore, we have shown that the function $\kappa$ defined in equation \eqref{eq: kappa} satisfies all the required properties (1')-(5').
\end{proof}

\begin{remark}
For the specific choice of $\kappa$ given by \eqref{eq: kappa}, we can select
\begin{equation*}
    a= 12K, \quad L =\frac{c}{6}  \quad \text{and } \epsilon = \frac{c}{42d}
\end{equation*}
and obtain
\begin{equation}\label{eq:bound_kappa}
\lVert \kappa \lVert_\infty = \left(1+\frac{84K}{c}\right) \frac{48 K R^2}{d} \quad \text{and} \quad \lVert \nabla \kappa \rVert_\infty = \frac{48 K R}{d}.
\end{equation}
Then, plugging \eqref{eq:bound_kappa} into the formulas for $T_0$, $\delta_0$, and $h$ obtained in Theorem \ref{thm:rhoContraction} and Corollary \ref{cor: WContraction}, we get explicit bounds on the ranges of parameters $T$ and $\delta$, and an explicit contraction rate $h$. 
\end{remark}

\section{Proofs of Theorem \ref{thm:rhoContractionForSystem}}\label{sec:proofInteracting}
Similarly as in Section \ref{sec: proofForOneParticle}, we begin by specifying the ranges of $\delta$ and $T$. The lower bound on $T$ is given by 
\begin{equation}\label{eq: tildeT_0}
    \tilde{T}_0:=\max(\tilde{T}_1, \tilde{T}_2,\tilde{T}_3),
\end{equation}
where
\begin{align*}
    &\tilde{T}_1:= \frac{2 \left(\sup_{\lvert y^{(i)}\rvert \leq R} \lvert F(y^{(i)})\rvert + M_G+ M_G R^p \right)\lVert \nabla \kappa \rVert_\infty}{a}\\
    &\tilde{T}_2:=\frac{ 2 \left(\sup_{\lvert y^{(i)}\rvert \leq \tilde{R}} \lvert F(y^{(i)})\rvert +M_G + M_G \tilde{R}^p \right) \lVert \nabla \kappa \rVert_\infty}{L}\\
    &\tilde{T}_3:= 2\lVert \kappa \rVert_\infty.
\end{align*}
with
$$\tilde{R}:= \max \left(1, \frac{R_*+\delta F(0) +\delta M_G}{1-\delta L_F -\delta M_G} \right).$$
The upper bound on $\delta$ is given by
\begin{equation}\label{eq: tilde_delta_0}
\tilde{\delta}_0:=\min(\tilde{\delta}_1,\tilde{\delta}_2,\tilde{\delta}_3,\tilde{\delta}_4),
\end{equation}
where
\begin{equation*}
    \tilde{\delta}_1 := \frac{1}{L_F+M_G} \quad
    \tilde{\delta_2}:= \frac{c}{L_F^2} \quad
    \tilde{\delta}_3:= \frac{K}{L_F^2} \quad
    \tilde{\delta}_4 = \frac{R^2 \Gamma(\frac{d}{2})}{T(d+2)\Gamma(\frac{d}{2}) +16T\Gamma(\frac{d+2}{2})}.
\end{equation*}
Similarly to Proposition \ref{prop: properties_kappa_b}, the weight function $\kappa$ has the following properties for appropriate ranges of $\delta$ and $T$.
\begin{proposition}\label{prop:kappa_system}
Let $Z$ be a $d$-dimensional standard normal random variable. Suppose Assumptions \ref{hypo: F} and \ref{hypo: G} hold and let $\kappa$ be as in Proposition \ref{prop:existence_kappa}. Suppose furthermore $T\geq \max\{\Tilde{T}_1, \Tilde{T}_2\}$ and $\delta \in (0,\Tilde{\delta}_1]$. Then $\kappa$ satisfies the following two items.\\
i) For all $i \in \llbracket 1,N\rrbracket$ and for all $\textbf{x}\in \mathbb{R}^{dN}$ such that $\lvert x^{(i)} \rvert \leq R$, we have
\begin{equation}\label{eq:bound_a_system}
   \mathbb{E}\left[\kappa \left(x^{(i)}+ \delta F(x^{(i)}) + \delta G_i(\textbf{x})+ \sqrt{2 \delta T}Z\right)\right] \leq \kappa(x^{(i)}) -\frac{a}{2} \delta T.
\end{equation}
ii) For all $i \in \llbracket 1,N\rrbracket$ and for all $\textbf{x}\in \mathbb{R}^{dN}$ such that $\lvert x^{(i)} \rvert \geq R$, we have
\begin{equation}\label{eq:bound_L_system}
    \mathbb{E}\left[\kappa \left(x^{(i)}+ \delta F(x^{(i)})+ \delta G_i(\textbf{x}) + \sqrt{2 \delta T}Z\right)\right] \leq \kappa(x^{(i)}) + \frac{3}{2}L \delta T.
\end{equation}
\end{proposition}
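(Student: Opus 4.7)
The plan is to adapt the proof of Proposition~\ref{prop: properties_kappa_b} by absorbing the extra interaction drift $\delta G_i(\textbf{x})$ with the growth bound \eqref{eq: hypG3}. The common starting point for both items will be the mean value comparison
\[
\kappa\bigl(x^{(i)} + \delta F(x^{(i)}) + \delta G_i(\textbf{x}) + \sqrt{2\delta T}Z\bigr) \leq \kappa\bigl(x^{(i)} + \sqrt{2\delta T}Z\bigr) + \delta \|\nabla\kappa\|_\infty \bigl(|F(x^{(i)})| + |G_i(\textbf{x})|\bigr),
\]
after which I will take expectation and invoke the properties \eqref{eq:condition_bound_a}--\eqref{eq:condition_bound_L} of $\kappa$ granted by Proposition~\ref{prop:existence_kappa}.

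For item (i), fix $\textbf{x}$ with $|x^{(i)}| \leq R$. I will bound $|F(x^{(i)})| \leq \sup_{|y|\leq R}|F(y)|$ and, by \eqref{eq: hypG3}, $|G_i(\textbf{x})| \leq M_G(1 + R^p)$. Applying \eqref{eq:condition_bound_a} supplies the leading term $-a\delta T$, and the choice $T \geq \tilde{T}_1$ is tailored so that the residual $\delta \|\nabla\kappa\|_\infty(\sup_{|y|\leq R}|F(y)| + M_G(1+R^p))$ is absorbed into $(a/2)\delta T$, which delivers \eqref{eq:bound_a_system}.

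For item (ii), where $|x^{(i)}| \geq R$, I will split on the magnitude of $|x^{(i)}|$ relative to $\tilde{R}$. In the sub-case $R \leq |x^{(i)}| < \tilde{R}$, I will rerun the mean value argument with $\sup_{|y|\leq \tilde{R}}|F(y)|$ and $M_G(1 + \tilde{R}^p)$ in place of their $R$-analogues and use \eqref{eq:condition_bound_L} together with $T \geq \tilde{T}_2$ to absorb the correction into $(L/2)\delta T$. In the sub-case $|x^{(i)}| \geq \tilde{R}$, I will instead show, along the lines of \eqref{eq:greatthanRstar}, that the deterministic displacement satisfies
\[
|x^{(i)} + \delta F(x^{(i)}) + \delta G_i(\textbf{x})| \geq R_*,
\]
using the triangle inequality, the Lipschitz property of $F$, the growth bound on $G_i$, and $\delta \leq \tilde{\delta}_1$; since both $\kappa$ and $\nabla\kappa$ vanish beyond $R_*$, the identities $\kappa(x^{(i)}) = 0$ and $\kappa(x^{(i)} + \delta F(x^{(i)}) + \delta G_i(\textbf{x})) = 0$ hold, and applying \eqref{eq:condition_bound_L} at the shifted base point directly yields \eqref{eq:bound_L_system}.

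The hard part will be the large-$|x^{(i)}|$ sub-case, where the superlinear growth of $|G_i|$ when $p > 1$ must be reconciled with the apparently linear structure of $\tilde{R}$. The safeguard $\tilde{R} \geq 1$ built into the definition of $\tilde{R}$ is what makes $|x^{(i)}|^p$ controllable against $|x^{(i)}|$ in this regime, and the fact that $\tilde{R} \geq R_*$ for all admissible $\delta$ is what allows $\kappa(x^{(i)})$ itself to be identified with zero. Checking that the constants $\tilde{T}_1$, $\tilde{T}_2$ in \eqref{eq: tildeT_0} simultaneously absorb all residuals in both sub-cases is where the bulk of the remaining bookkeeping lies.
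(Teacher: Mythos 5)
Your plan follows the paper's proof step for step: the mean value bound to peel off the deterministic drift $\delta F(x^{(i)})+\delta G_i(\textbf{x})$, the three regimes $\lvert x^{(i)}\rvert\le R$, $R\le \lvert x^{(i)}\rvert<\tilde R$, $\lvert x^{(i)}\rvert\ge\tilde R$, the absorption of the residual gradient terms through the choices of $\tilde T_1$ and $\tilde T_2$, and the displacement implication $\lvert x^{(i)}\rvert\ge\tilde R\Rightarrow \lvert x^{(i)}+\delta F(x^{(i)})+\delta G_i(\textbf{x})\rvert\ge R_*$, which lets you apply \eqref{eq:condition_bound_L} at the shifted base point with $\kappa$ vanishing both there and at $x^{(i)}$ (your remark that $\tilde R\ge R_*$ is exactly what the paper uses implicitly).

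The one place where your reasoning does not hold up is the mechanism you invoke for the outer regime when $p>1$: you claim that the safeguard $\tilde R\ge 1$ ``makes $\lvert x^{(i)}\rvert^p$ controllable against $\lvert x^{(i)}\rvert$'', but the inequality runs the other way in that regime — for $\lvert x^{(i)}\rvert\ge\tilde R\ge 1$ and $p>1$ one has $\lvert x^{(i)}\rvert^p\ge\lvert x^{(i)}\rvert$, so the superlinear term is not dominated by the linear one; it is only for $\lvert x\rvert\le 1$ that $\lvert x\rvert^p\le\lvert x\rvert$. To be fair, the paper's own proof makes exactly this replacement (bounding $M_G(1+\lvert x^{(i)}\rvert^p)$ by $M_G(1+\lvert x^{(i)}\rvert)$ for $\lvert x^{(i)}\rvert\ge\tilde R$), so the argument as written in both places is only airtight for $p=1$ or for bounded $G_i$. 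If you want a watertight treatment of $p>1$, observe that \eqref{eq: hypG1} makes each $G_i$ an $L_G$-Lipschitz function of $x^{(i)}$ with the other particles frozen, while \eqref{eq: hypG3} evaluated at $x^{(i)}=0$ gives $\lvert G_i\rvert\le M_G$ there; together these yield the linear bound $\lvert G_i(\textbf{x})\rvert\le M_G+L_G\lvert x^{(i)}\rvert$, with which the displacement estimate goes through after replacing $M_G$ by $L_G$ in $\tilde\delta_1$ and in the denominator defining $\tilde R$.
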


\begin{proof}
    We fix $i \in \llbracket 1,N\rrbracket$. Let $\textbf{x}\in \mathbb{R}^{dN}$ with $\lvert x^{(i)} \rvert \leq R$. Then, as a consequence of Assumptions \ref{hypo: F} and \ref{hypo: G},
    \begin{align*}
        \lvert \delta F(x^{(i)}) +\delta G_i(\textbf{x})\rvert &\leq \delta\lvert F(x^{(i)})\rvert + \delta \lvert  G_i(\textbf{x})\rvert\\
        & \leq \delta \left(\sup_{\lvert y^{(i)}\rvert \leq R} \lvert F(y^{(i)})\rvert + M_G+ M_G\lvert x^{(i)} \rvert^p \right)\\
        & \leq \delta \left(\sup_{\lvert y^{(i)}\rvert \leq R} \lvert F(y^{(i)})\rvert + M_G+ M_G R^p \right).
    \end{align*}
Due to the mean value theorem, we have
    \begin{eqnarray*}
        \lefteqn{\mathbb{E}\left[\kappa \left(x^{(i)}+ \delta F(x^{(i)})+ \delta G_i(\textbf{x}) + \sqrt{2 \delta T}Z\right) \right]}\\ &\leq& \mathbb{E}\left[\kappa \left(x^{(i)}+ \sqrt{2\delta T}Z \right)\right] + \delta \lVert \nabla \kappa \rVert_\infty \left(\sup_{\lvert y^{(i)}\rvert \leq R} \lvert F(y^{(i)})\rvert + M_G+ M_G R^p \right)\\
        & \leq& \kappa(x^{(i)}) - a \delta T 
        + \delta \lVert \nabla \kappa \rVert_\infty \left(\sup_{\lvert y^{(i)}\rvert \leq R} \lvert F(y^{(i)})\rvert + M_G+ M_G R^p \right)\\
        & \leq& \kappa \left (x^{(i)} \right) - \frac{a}{2} \delta T,
    \end{eqnarray*}
where the second inequality holds due to condition \eqref{eq:condition_bound_a} in Proposition \ref{prop:existence_kappa}, and the last inequality holds because $T \geq \tilde{T}_1$. Hence we have \eqref{eq:bound_a_system}. Now we fix $i \in \llbracket 1,N\rrbracket$ and let $\textbf{x}$ be any point in $\mathbb{R}^{dN}$ such that $\lvert x^{(i)}\rvert\geq R$. Before showing \eqref{eq:bound_L_system}, we first prove the following implication:
\begin{equation}\label{eq: ith_norm_threshold}
    \lvert x^{(i)}\rvert \geq \tilde{R}  \Longrightarrow \lvert x^{(i)}+ \delta F(x^{(i)})+ \delta G_i(\textbf{x})\rvert  \geq R_*.
\end{equation}
Indeed, taking $\textbf{x}$ such that $\lvert x^{(i)}\rvert \geq \tilde{R}$, we have
\begin{align*}
     \lvert x^{(i)}+ \delta F(x^{(i)})+ \delta G_i(\textbf{x})\rvert &\geq \lvert x^{(i)}\rvert - \delta \lvert  F(x^{(i)})\rvert - \delta \lvert G_i(\textbf{x})\rvert\\
     &\geq \lvert x^{(i)}\rvert -\delta (L_F \lvert x^{(i)}\rvert+ F(0)) - \delta M_G(1+\lvert x^{(i)}\rvert^p)\\
     &\geq \lvert x^{(i)}\rvert -\delta (L_F \lvert x^{(i)}\rvert+ F(0)) - \delta M_G(1+\lvert x^{(i)}\rvert)\\
     &\geq (1-\delta L_F-\delta M_G)\lvert x^{(i)}\rvert -\delta F(0) -
     \delta M_G \geq R_*.
\end{align*}
 We will split the remaining part of the proof into two cases: when $\lvert x^{(i)}\rvert \geq R$, then $\lvert x^{(i)} \rvert$ is either in the range between $R$ and $\tilde{R}$, or greater than $\tilde{R}$. 
 
 Taking $\textbf{x}$ such that $R \leq \lvert x^{(i)} \rvert \leq \tilde{R}$, the term $\lvert \delta F(x^{(i)}) +\delta G_i(\textbf{x})\rvert$ is still bounded, so the proof follows the same argument as for showing \eqref{eq:bound_a_system}. We have
\begin{align*}
     \mathbb{E}&\left[\kappa \left(x^{(i)}+ \delta F(x^{(i)})+ \delta G_i(\textbf{x}) + \sqrt{2 \delta T}Z\right)\right]  \\
     &\leq \mathbb{E}\left[\kappa \left(x^{(i)}+ \sqrt{2\delta T}Z \right)\right] + \delta \lVert \nabla \kappa \rVert_\infty \left(\sup_{\lvert y^{(i)}\rvert \leq \tilde{R}} \lvert F(y^{(i)})\rvert +M_G + M_G \tilde{R}^p \right)\\
     & \leq \kappa(x^{(i)}) + L \delta T + \delta \lVert \nabla \kappa \rVert_\infty \left(\sup_{\lvert y^{(i)}\rvert \leq \tilde{R}} \lvert F(y^{(i)})\rvert +M_G + M_G \tilde{R}^p \right)\\
     & \leq \kappa\left ( x^{(i)}\right) + \frac{3}{2}L \delta T,
\end{align*}
where the last inequality holds because $T \geq \tilde{T}_2$. 

Now, if we take $\textbf{x}$ such that $\lvert x^{(i)} \rvert \geq \tilde{R}$, then thanks to \eqref{eq: ith_norm_threshold} and condition \eqref{eq:condition_bound_L} in Proposition \ref{prop:existence_kappa}, we have
\begin{align*}
    \mathbb{E}&\left[\kappa \left(x^{(i)}+ \delta F(x^{(i)})+ \delta G_i(\textbf{x}) + \sqrt{2 \delta T}Z\right)\right]\\
    &\leq \kappa\left(x^{(i)} +\delta F(x^{(i)}) + \delta G_i(\textbf{x})\right) +L\delta T\\
    &= \kappa \left(x^{(i)} \right) + L \delta T,
\end{align*}
where the last equality holds because $\kappa$ is $0$ at both points. 

\end{proof}

\begin{proof}[Proof of Theorem \ref{thm:rhoContractionForSystem}]
     We will show the one step contraction with respect to $\tilde{\rho}$ defined by \eqref{eq: defRhoTilde}. Let $\textbf{x},\textbf{y}$ be any two points in $\mathbb{R}^{dN}$. Let $(\textbf{X}_n,\textbf{Y}_n)_{\{n\geq 0\}}$ be the synchronous coupling of \eqref{eq: descrete_ststem} issued from $(\textbf{x},\textbf{y})$. We have
    \begin{align*}
        \mathbb{E} [\rho(\textbf{X}_1,\textbf{Y}_1)]&= \mathbb{E} \left[\sum_{i=1}^N \left \lvert X^i_1 - Y^i_1 \right \rvert^2 \left(T + \kappa(X^i_1) +\kappa(Y^i_1)\right)\right]\\
        & =\sum_{i=1}^N \lvert x^{(i)} - y^{(i)} + \delta (F(x^{(i)}) -F(y^{(i)})) +\delta(G_i(\textbf{x}) -G_i(\textbf{y}))\rvert^2 \times\\
        & \times \mathbb{E} \left[\left(T + \kappa(X^i_1) +\kappa(Y^i_1)\right)\right]\\
        & =\underbrace{\sum_{i=1}^N \left(\lvert x^{(i)} - y^{(i)} \rvert^2 +\delta^2 \lvert F(x^{(i)}) -F(y^{(i)})\rvert^2 \right) \mathbb{E}[T+\kappa(X_1^i) +\kappa(Y_1^i)]}_{:=(i)}\\
        &+\underbrace{\sum_{i=1}^N 2\delta(x^{(i)} -y^{(i)}) \cdot (F(x^{(i)})-F(y^{(i)}))\mathbb{E}[T+\kappa(X_1^i) +\kappa(Y_1^i)]}_{:=(ii)}\\
        &+\underbrace{\sum_{i=1}^N \delta^2 \lvert G_i(\textbf{x}) - G_i(\textbf{y})\rvert^2\mathbb{E}[T+\kappa(X_1^i) +\kappa(Y_1^i)]}_{:=(iii)}\\
        &+\underbrace{\sum_{i=1}^N 2 \delta (x^{(i)}-y^{(i)})\cdot (G_i(\textbf{x})-G_i(\textbf{y})) \mathbb{E}[T+\kappa(X_1^i) +\kappa(Y_1^i)]}_{:=(iv)}\\
        &+\underbrace{\sum_{i=1}^N 2 \delta^2 (F(x^{(i)})-F(y^{(i)}))\cdot (G_i(\textbf{x})-G_i(\textbf{y}))\mathbb{E}[T+\kappa(X_1^i) +\kappa(Y_1^i)]}_{:=(v)}
    \end{align*}
We define $c^{\textbf{x},\textbf{y}}_i:= -c$ if $\lvert x^{(i)} \rvert\geq R$ or $\lvert y^{(i)} \rvert\geq R$, and $c^{\textbf{x},\textbf{y}}_i:= K$ otherwise. By Assumption~\ref{hypo: F}, we have
\begin{equation*}
    (i)+(ii) \leq \sum_{i=1}^N (1+\delta^2 L_F^2+2 \delta c^{\textbf{x},\textbf{y}}_i)\lvert x^{(i)} -y^{(i)} \rvert^2\mathbb{E}[T+\kappa(X_1^i) +\kappa(Y_1^i)].
\end{equation*}
For the item $(iii)$, using condition \eqref{eq: hypG1} from Assumption \ref{hypo: G}, we obtain
\begin{align*}
    (iii) &\leq \sum_{i=1}^N \delta^2 \lvert G_i(\textbf{x})-G_i(\textbf{y}) \rvert^2 (T+2\lVert \kappa \rVert_\infty)\\
    &=\delta^2(T+2\lVert \kappa \rVert_\infty) \lvert G(\textbf{x}) -G(\textbf{y})\rvert^2\\
    & \leq \delta^2 L_G^2 (T+2\lVert \kappa \rVert_\infty) \lvert \textbf{x} -\textbf{y}\rvert^2\\
    & = \sum_{i=1}^N \delta^2 L_G^2 (T+2\lVert \kappa \rVert_\infty)\lvert x^{(i)} -y^{(i)}\rvert^2.
\end{align*}
For the items $(iv)$ and $(v)$, we need condition \eqref{eq: hypG2} from Assumption \ref{hypo: G}, as well as Assumption \ref{hypo: F}. We have
\begin{align*}
    (iv) &\leq \sum_{i=1}^N 2 \delta \lvert x^{(i)} - y^{(i)}\rvert \lvert G_i(\textbf{x}) - G_i(\textbf{y}) \rvert(T+2\lVert \kappa \rVert_\infty )\\
    & \leq 2 \delta (T+2\lVert \kappa \rVert_\infty)C_G \lvert \textbf{x}-\textbf{y}\rvert^2\\
    & = \sum_{i=1}^N 2 \delta (T+2\lVert \kappa \rVert_\infty)C_G \lvert x^{(i)}-y^{(i)}\rvert^2
\end{align*}
and
\begin{align*}
    (v) &\leq \sum_{i=1}^N 2 \delta^2 \lvert F(x^{(i)}) - F(y^{(i)})\rvert \lvert G_i(\textbf{x}) - G_i(\textbf{y})\rvert (T+2\lVert \kappa \rVert_\infty)\\
    &\leq \sum_{i=1}^N 2 \delta^2L_F(T+2\lVert \kappa \rVert_\infty) \lvert x^{(i)} - y^{(i)}\rvert \lvert G_i(\textbf{x}) - G_i(\textbf{y})\rvert \\
    & \leq \sum_{i=1}^N 2 \delta^2L_F C_G (T+2\lVert \kappa \rVert_\infty) \lvert x^{(i)} - y^{(i)}\rvert^2.
\end{align*}
With all those bounds, coming back to our main computation, we have
\begin{align*}
    \mathbb{E} [\rho(\textbf{X}_1,\textbf{Y}_1)]& \leq \sum_{i=1}^N (1+ \delta^2 L_F^2 +2 \delta c^{\textbf{x},\textbf{y}}_i) \lvert x^{(i)} - y^{(i)} \rvert^2 \mathbb{E} \left[T + \kappa(X^i_1) +\kappa(Y^i_1)\right]\\
    &+\sum_{i=1}^N (\delta^2 L_G^2 +2 \delta C_G + 2\delta^2L_F C_G)(T+2\lVert \kappa \rVert_\infty) \lvert x^{(i)} -y^{(i)} \rvert^2.
\end{align*}
By Proposition \ref{prop:kappa_system}, we know that for any $i\in \llbracket 1, N\rrbracket$, if either $\lvert x^{(i)} \rvert\geq R$ or $\lvert y^{(i)} \rvert\geq R$, then we have
\begin{align*}
&(1+ \delta^2 L_F^2 -2 \delta c)\lvert x^{(i)} - y^{(i)} \rvert^2 \mathbb{E}[T+\kappa(X^i_1) +\kappa(Y^i_1)]\\
&\leq (1+ \delta^2 L_F^2 -2 \delta c) \lvert x^{(i)} - y^{(i)} \rvert^2 (T + \kappa(x^{(i)}) + \kappa(y^{(i)}) + 3L \delta T)\\
& =  \lvert x^{(i)} - y^{(i)} \rvert^2 (T + \kappa(x^{(i)}) + \kappa(y^{(i)}))(1+ \delta^2 L_F^2 -2 \delta c)\left(1+ \frac{3L \delta T}{T + \kappa(x^{(i)}) + \kappa(y^{(i)})}\right).
\end{align*}
On the other hand, when both $\lvert x^{(i)} \rvert\leq R$ and $\lvert x^{(i)} \rvert\leq R$, we have 
\begin{align*}
 &(1+ \delta^2 L_F^2 +2 \delta K)\lvert x^{(i)} - y^{(i)} \rvert^2  \mathbb{E}[T+\kappa(X^i_1) +\kappa(Y^i_1)]\\
&\leq (1+ \delta^2 L_F^2 +2 \delta K) \lvert x^{(i)} - y^{(i)} \rvert^2 (T + \kappa(x^{(i)}) + \kappa(y^{(i)}) -a \delta T)\\
&=\lvert x^{(i)} - y^{(i)} \rvert^2 (T + \kappa(x^{(i)}) + \kappa(y^{(i)}))(1+ \delta^2 L_F^2 +2 \delta K) \left( 1- \frac{a \delta T}{T+\kappa(x^{(i)}) +\kappa(y^{(i)})}\right).
\end{align*}
Note that since $\kappa$ is non-negative, we can bound $3L\delta T/(T + \kappa(x^{(i)}) + \kappa(y^{(i)}))$ by $3 L \delta$. Moreover, since $T\geq\tilde{T}_3$, we can bound $(-a \delta T)/(T + \kappa(x^{(i)}) + \kappa(y^{(i)}))$ by $(-a \delta)/2$. Following the same argument as in Section \ref{sec: proofForOneParticle} (going from \eqref{eq:to_modify_L} and \eqref{eq:to_modify_a} to \eqref{eq:h}) we get that, when $\delta < \min(\tilde{\delta}_2,\tilde{\delta}_3)$,
\begin{equation*}
    (1+ \delta^2 L_F^2 -2 \delta c)\left(1+ \frac{3L \delta T}{T + \kappa(x^{(i)}) + \kappa(y^{(i)})}\right) \leq 1-h\delta
\end{equation*}
and
\begin{equation*}
    (1+ \delta^2 L_F^2 +2 \delta K) \left( 1- \frac{a \delta T}{T+\kappa(x^{(i)}) +\kappa(y^{(i)})}\right) \leq 1-h\delta,
\end{equation*}
with $h =\min(c/2,a/4)$. As a consequence, we have
\begin{eqnarray*}
    \lefteqn{\mathbb{E} [\rho(\textbf{X}_1,\textbf{Y}_1)]}\\
    & \leq &\sum_{i=1}^N ( 1-h\delta)\lvert x^{(i)} -y^{(i)} \rvert ^2 (T + \kappa(x^{(i)}) + \kappa(y^{(i)}))\\
    & & + \sum_{i=1}^N \left((\delta^2 L_G^2 +2 \delta C_G + 2\delta^2 L_F C_G)(T+2\lVert \kappa \rVert_\infty) \right) \lvert x^{(i)} -y^{(i)} \rvert ^2\\
    & \leq & \sum_{i=1}^N (1-h\delta)\lvert x^{(i)} -y^{(i)} \rvert ^2 (T +\kappa(x^{(i)})+\kappa(y^{(i)}))\\
    & & + \sum_{i=1}^N\frac{(\delta^2 L_G^2 +2 \delta C_G + 2\delta^2L_F C_G)(T+2\lVert \kappa \rVert_\infty)}{T +\kappa(x^{(i)})+\kappa(y^{(i)})}\lvert x^{(i)} -y^{(i)} \rvert ^2 (T +\kappa(x^{(i)})+\kappa(y^{(i)}))\\
    & \leq & \left(1-\left(h - \tilde{h}\right)\delta\right) \rho(\textbf{x},\textbf{y}),
\end{eqnarray*}
where $\tilde{h}$ is defined by \eqref{eq: defhTilde}. Denoting $\tilde{w}:=\left(h-\tilde{h}\right)\delta$, we conclude that 
$$\mathbb{E}[\tilde{\rho}(\textbf{X}_1,\textbf{Y}_1)] \leq (1-\tilde{w})\tilde{\rho}(\textbf{x},\textbf{y}) \quad \text{ for all } \textbf{x},\textbf{y} \in \mathbb{R}^{dN}.$$
Applying \cite[Lemma 2.1]{EberleMajka2019} we get the desired result.
\end{proof}

\begin{remark}\label{remark:rhoContractionForSystem}
    Note that $\min(c/2, L_F/2) \geq 5 C_G$ is a sufficient condition for $h>\tilde{h}$. As a matter of fact, under this condition, we have
    \begin{align*}
        h- \tilde{h} &\geq 5C_G - 2C_G\left(1 +\frac{2\lVert \kappa \rVert_\infty}{T}\right) - (L_G^2 +2 L_F L_G)\left(1 +\frac{2\lVert \kappa \rVert_\infty}{T}\right)\delta\\
        &\geq C_G -2(L_G^2+2L_F L_G)\delta\\
    &\geq \frac{C_G}{2}>0
    \end{align*}
where we get the second inequality by using the lower bound $T \geq \tilde{T}_3$.
\end{remark}

\section{Proof of Theorem \ref{thm:PoincareInequality} and Corollary \ref{cor:Poincare}}\label{sec: proof_local_Poincare}
\begin{proof}[Proof of Theorem \ref{thm:PoincareInequality}]
We have
\begin{align*}
    \bar{Q}^k f^2 - (\bar{Q}^k f)^2 &= \sum_{j=0}^{k-1} \bar{Q}^{k-j}(\bar{Q}^j f)^2 - \bar{Q}^{k-j-1}(\bar{Q}^{j+1}f)^2\\
    &=\sum_{j=0}^{k-1} \bar{Q}^{k-j-1}\left( \bar{Q}(\bar{Q}^j f)^2 - (\bar{Q}(\bar{Q}^j f))^2 \right).
\end{align*}
Applying the local Poincar\'{e} inequality \eqref{eq: localPoincare} with the smooth function $g_j:= \bar{Q}^j f$, we obtain
\begin{align*}
     \bar{Q}^k f^2 - (\bar{Q}^k f)^2  \leq \sum_{j=0}^{k-1} \bar{Q}^{k-j-1} C_{LP} \bar{Q} \lvert \nabla \bar{Q}^j f\rvert ^2.
\end{align*}
In order to proceed, let us observe that thanks to \cite[Theorem 2.2]{Kuwada1}, by choosing $\tilde{d}_2^k(x,y) = \bar{M} (1-\bar{w})^k d_2(x,y)$ in \cite{Kuwada1}, the Wasserstein contraction condition \eqref{eq: W2ContractionGeneral} implies that for all $f$,
\begin{equation*}
    \left \lvert  \nabla \bar{Q}^k f \right \rvert^2 (x) \leq \bar{M}^2 (1-\bar{w})^{2k} \bar{Q}^k \left \lvert \nabla f \right \rvert^2(x). 
\end{equation*}
As a consequence, we obtain
\begin{align*}
    \bar{Q}^k f^2 - (\bar{Q}^k f)^2 & \leq \sum_{j=0}^{k-1} \bar{Q}^{k-j-1} C_{LP} \bar{Q} \Bar{M}^2(1-\bar{w})^{2j} \bar{Q}^j \lvert \nabla f \rvert^2\\
    & =  C_{LP}  \bar{M}^2 \bar{Q}^k \lvert \nabla f \rvert^2 \sum_{j=0}^{k-1} (1-\bar{w})^{2j}  \\
    & \leq C_{LP}  \bar{M}^2 \bar{Q}^k \lvert \nabla f \rvert^2 \sum_{j=0}^\infty (1-\bar{w})^{2j} \\
    & = \frac{C_{LP} \bar{M}^2}{1-(1-\bar{w})^2} \bar{Q}^k \lvert \nabla f \rvert^2.
\end{align*}
Thus we get
\begin{equation}\label{eq: local_Poincare_Q_k}
    \bar{Q}^k f^2(x) - (\bar{Q}^k f)^2(x) \leq C_P \bar{Q}^k \lvert \nabla f \rvert^2(x), \quad \text{ for all } x \in \mathbb{R}^d \text{ and for all } k \in \mathbb{N},
\end{equation}
which is the the local Poincar\'{e} inequality for $\bar{Q}^k$ with constant $C_P =\frac{C_{LP} \bar{M}^2}{1-(1-\bar{w})^2}$. It remains to demonstrate that the invariant measure of the Markov chain satisfies the Poincar\'{e} inequality with the same constant $C_P$. First, it is a well-known fact that the $L^2$-Wasserstein contraction \eqref{eq: W2ContractionGeneral} implies the existence and uniqueness of the invariant measure $\pi_\infty$, due to a fixed point argument. In particular, we know that for any $x \in \mathbb{R}^d$, we have $\bar{Q}^k(x, \cdot) \to \pi_{\infty}$ as $k \to \infty$, in the sense of weak convergence of measures. Hence, by letting $k$ tend to infinity in \eqref{eq: local_Poincare_Q_k}, we obtain the Poincar\'{e} inequality for the invariant measure $\pi_\infty$.
\end{proof}

\begin{proof}[Proof of Corollary \ref{cor:Poincare}]
    Thanks to Corollary \ref{cor: WContraction}, we have the $\mathcal{W}_2$-contraction for $Q$ with constants $\bar{M} = M$ and $\bar{w} = \delta h$.
    Moreover, it is well-known that $Q$ satisfies the local Poincar\'{e} inequality. Specifically, for any $x\in \mathbb{R}^d$, the distribution of $Q(x,\cdot)$ is the Gaussian distribution $\mathcal{N}(x+\delta b(x),2\delta T I_d)$, which satisfies the Poincar\'{e} inequality (see 
     \cite[Proposition 4.8.1]{BakryGentilLedoux}) with the constant given by the largest eigenvalue of the covariance matrix. In our case, this constant is $2\delta T$. Hence we obtain \eqref{eq: localPoincare} with constant $C_{LP} = 2\delta T$. Therefore, based on Theorem \ref{thm:PoincareInequality}, we obtain the desired result.
\end{proof}

\section{Confidence Intervals}\label{sec:ConfidenceIntervals}
This section is concerned with concentration inequalities   for Monte Carlo averages associated to the Markov chain \eqref{eq:chain}. Given two probability measures $\nu, \mu$ on $\mathbb{R}^d$, the relative entropy (or the Kullback-Leibler divergence) of $\nu$ with respect to $\mu$ is defined as
\begin{equation*}
    \mathcal{H}(\nu|\mu) = \begin{cases}
        \int_{\mathbb{R}^d} \log \frac{d \nu}{ d\mu} d\nu \quad &\text{if } \nu \ll \mu\\
        + \infty \quad &\text{otherwise.}
    \end{cases}
\end{equation*}
A measure $\mu$ is said to satisfy the $L^1$-transportation cost-information inequality with
constant $C > 0$ (in which case we write that $\mu$ satisfies $T_1(C)$) if
\begin{equation*}
    \mathcal{W}_1(\nu,\mu) \leq \sqrt{2C \mathcal{H}(\nu|\mu)}.
\end{equation*}

It is known (see e.g.\ \cite[Corollary 2.6]{Djellout}) that, as stated in the next proposition, such an inequality together with a $L^1$-Wasserstein contraction implies Gaussian concentration inequalities for empirical averages (hence non-asymptotic confidence intervals for MCMC estimators). The $L^2$ result obtained in Corollary~\ref{cor: WContraction} implies the weaker $L^1$ one. Of course $L^1$-Wasserstein contraction were already known under Assumptions~\ref{hypo:1} and \ref{hypo:2}, the interest of our new $L^2$ result is that, in some cases, the contraction rate scales well with dimension, as discussed in \cite{moiCourbureTemperature}. Since the statement below is not exactly the same as the one of \cite{Djellout}, we include a detailed proof  for completeness.
\begin{proposition}\label{prop:concentration}
    Let $Q$ be a Markov transition kernel operator on $\mathbb{R}^d$. Suppose that there exist $M \geq 1 >\theta >0$ such that for all $\mu,\nu\in\mathcal{P}(\mathbb{R}^d)$ and for all $k \in \mathbb{N}$ we have 
    \begin{equation}\label{eq:W1contract}
        \mathcal{W}_1(\mu Q^k, \nu Q^k) \leq M(1-\theta)^k \mathcal{W}_1(\mu,\nu).
    \end{equation}
    Suppose that $Q$ satisfies the local $T_1(C)$ on $\mathbb{R}^d$ with a constant $C > 0$, in the sense that for all $x \in \mathbb{R}^d$ and for all measures $\mu \in \mathcal{P}(\mathbb{R}^d)$ we have 
    \begin{equation*}
        \mathcal{W}_1(Q(x,\cdot), \mu) \leq \sqrt{2C \mathcal{H}(\mu | Q(x,\cdot))}.
    \end{equation*}
    Then for the Markov chain $(X_k)_{k \geq 0}$ associated to $Q$ with $X_0 \sim \nu_0$ where $\nu_0$ is any measure satisfying $T_1(C_0)$ inequality with constant $C_0>0$, we have for any 1-Lipschitz $\phi$
    \begin{equation}\label{eq: Confidence_Interval}
        \mathbb{P}\left( \frac{1}{n}\sum_{k=0}^{n-1} \left( \phi(X_k)-\mathbb{E}\left [ \phi(X_k)\right]\right) \geq u \right) \leq \exp \left( - \frac{n^2 u^2\theta^2}{2\left(\left(n-1\right)C \theta^2 + C_0 M^2\right)}\right).
    \end{equation}
\end{proposition}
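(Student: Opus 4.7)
The plan is to prove a sub-Gaussian moment generating function bound for the centred partial sum $S_n - \mathbb{E}[S_n]$ with $S_n := \sum_{k=0}^{n-1}\phi(X_k)$, and then extract \eqref{eq: Confidence_Interval} via Chernoff's inequality optimised over $\lambda > 0$. The MGF bound will emerge from the Doob martingale decomposition combined with the Bobkov--G\"{o}tze dual form of $T_1$: the equivalence between $T_1(C)$ for a probability measure $\mu$ and the sub-Gaussian estimate $\int e^{\lambda(g - \int g\,d\mu)}\,d\mu \leq \exp(\lambda^2 C\,\mathrm{Lip}(g)^2/2)$ for every Lipschitz function $g$.

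Concretely, I would set $M_j := \mathbb{E}[S_n\mid \mathcal{F}_j]$ for $\mathcal{F}_j = \sigma(X_0,\dots,X_j)$ with $M_{-1} = \mathbb{E}[S_n]$. The Markov property yields $M_j = \sum_{k=0}^{j-1}\phi(X_k) + H_j(X_j)$, where $H_j(x) := \sum_{\ell=0}^{n-1-j} Q^\ell\phi(x)$, so the martingale increments are $D_0 = H_0(X_0) - \mathbb{E}_{\nu_0}[H_0]$ and $D_j = H_j(X_j) - (QH_j)(X_{j-1})$ for $j\geq 1$, telescoping to $S_n - \mathbb{E}[S_n] = \sum_{j=0}^{n-1}D_j$. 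Applying \eqref{eq:W1contract} to Dirac initial conditions together with Kantorovich--Rubinstein duality gives $\mathrm{Lip}(Q^\ell\phi)\leq M(1-\theta)^\ell$. The dual form of $T_1(C_0)$ for $\nu_0$ then produces $\mathbb{E}[e^{\lambda D_0}]\leq \exp(\tfrac{1}{2}\lambda^2 C_0\,\mathrm{Lip}(H_0)^2)$, and the local $T_1(C)$ gives the conditional bound $\mathbb{E}[e^{\lambda D_j}\mid \mathcal F_{j-1}]\leq \exp(\tfrac{1}{2}\lambda^2 C\,\mathrm{Lip}(H_j)^2)$ for $j\geq 1$. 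Multiplying these bounds through the tower property and applying Chernoff at the optimal $\lambda>0$ will yield a Gaussian tail of the form \eqref{eq: Confidence_Interval}.

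The delicate point, and the main obstacle, will be to recover exactly the denominator $(n-1)C\theta^2 + C_0 M^2$. A uniform Lipschitz bound $\mathrm{Lip}(H_j)\leq M/\theta$ yields only the weaker denominator of shape $((n-1)C+C_0)M^2$. To obtain the sharper scaling where $\theta^2$ (rather than $M^2$) multiplies $(n-1)C$, I would exploit the geometric decay of $\mathrm{Lip}(Q^\ell\phi)$ by expanding $D_j = \sum_{\ell=0}^{n-1-j}(Q^\ell\phi(X_j) - Q^{\ell+1}\phi(X_{j-1}))$ and regrouping the resulting double sum by the layer index $\ell$: this produces a family of $n-1$ martingales whose Azuma--Hoeffding parameters scale as $C M^2 (1-\theta)^{2\ell}$, which can then be combined through a weighted H\"{o}lder inequality with weights proportional to $(1-\theta)^{-\ell}$ (or equivalently through a Marton-type coupling constructed layer by layer on the path space, triangulating at each step by $\mathcal W_1(Q(x,\cdot),Q(y,\cdot))\leq M(1-\theta)|x-y|$). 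The initial block $D_0$, whose Lipschitz constant is of order $M/\theta$, is handled separately by $T_1(C_0)$ and accounts for the $C_0 M^2$ contribution.
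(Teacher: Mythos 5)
Your engine is the same as the paper's: Chernoff plus the Bobkov--G\"otze dual form of $T_1$, with the contraction \eqref{eq:W1contract} giving $\mathrm{Lip}(Q^\ell\phi)\le M(1-\theta)^\ell$; your Doob-martingale bookkeeping is just the forward version of the paper's backward conditioning, in which the accumulated function at time $j$ is exactly your $H_j=\sum_{\ell=0}^{n-1-j}Q^\ell\phi$. Carried out honestly, either version gives $\mathbb{E}\bigl[e^{\lambda(S_n-\mathbb{E}S_n)}\bigr]\le\exp\bigl(\tfrac{\lambda^2}{2}\bigl(C\sum_{j\ge1}\mathrm{Lip}(H_j)^2+C_0\,\mathrm{Lip}(H_0)^2\bigr)\bigr)\le\exp\bigl(\tfrac{\lambda^2M^2}{2\theta^2}((n-1)C+C_0)\bigr)$, i.e.\ precisely the ``weaker'' denominator $((n-1)C+C_0)M^2$ you flag. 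The genuine gap is your final step: the layer regrouping with weighted H\"older cannot produce $(n-1)C\theta^2+C_0M^2$. The $\ell$-th layer is a martingale whose Azuma-type parameter is $\sigma_\ell^2\le(n-1)CM^2(1-\theta)^{2\ell}$, and the optimal H\"older weights (your $p_\ell\propto(1-\theta)^{-\ell}$ are exactly the optimal choice $p_\ell\propto 1/\sigma_\ell$) return $\sum_\ell p_\ell\sigma_\ell^2=(\sum_\ell\sigma_\ell)^2\le(n-1)CM^2/\theta^2$ --- identical to the uniform Lipschitz bound, with no gain of the hoped-for $\theta^2/M^2$ factor.

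Moreover, this is not an obstacle you failed to surmount: the constant in \eqref{eq: Confidence_Interval} appears unattainable in general. Take $d=1$, $Q(x,\cdot)=\mathcal{N}((1-\theta)x,C)$, so $M=1$ and the local $T_1$ constant is $C$, with $\phi(x)=x$ and $\nu_0=\delta_0$ (which satisfies $T_1(C_0)$ for every $C_0>0$, so one may let $C_0\downarrow0$). Then $\frac1n\sum_{k<n}\phi(X_k)$ is exactly centred Gaussian with variance of order $C/(n\theta^2)$ once $n\theta\gtrsim1$, so at $u$ equal to one standard deviation the left-hand side of \eqref{eq: Confidence_Interval} is a fixed constant ($\approx0.16$), while the right-hand side is roughly $\exp(-\mathrm{const}/\theta^2)$, which is smaller for small $\theta$; the correct variance proxy here really is $\sim nC/\theta^2$, matching the Djellout--Guillin--Wu scaling and your bound. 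The paper's own proof reaches the stated constant only through the assertion that $e^{\lambda Q\phi(X_{n-2})}$ ``can be replaced by'' $e^{\lambda Q^{n-1}\phi(X_0)}$ inside the expectation, which is not a valid inequality: if one instead keeps the accumulated term and tracks its Lipschitz constant (it is again $H_j$, of size up to $M/\theta$, not $1$), the paper's iteration yields exactly the bound you can prove, namely $\exp\bigl(-\tfrac{n^2u^2\theta^2}{2M^2((n-1)C+C_0)}\bigr)$. So the honest outcome of your approach is the provable statement; the extra sharpening should not be pursued, and the proposition's constant needs to be amended accordingly.
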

\begin{proof}
    Thanks to \cite[Theorem 1.1]{Djellout}, we know that if $\mu\in \mathcal{P}(\mathbb{R}^d)$ satisfies $T_1(C')$ with $C'>0$, then for all $\beta$-Lipschitz functions $\varphi : \mathbb{R}^d \to \mathbb{R}$ and for all $\lambda>0$, we have 
    \begin{equation}\label{eq: Concentration_inequality}
        \mu(e^{\lambda \varphi}) \leq e^{C'\beta^2 \lambda^2/2} e^{\lambda \mu(\varphi)}.
    \end{equation}
    Moreover, due to the Wasserstein contraction \eqref{eq:W1contract}, we can see that for any $1$-Lipschitz function $\phi: \mathbb{R}^d \to \mathbb{R}$ and for any $k\in\mathbb{N}$, the function $Q^k \phi$ is $M(1-\theta)^k$-Lipschitz. Indeed, for any coupling $(X_n,Y_n)_{n = 0}^{\infty}$ of two copies of the chain associated with the kernel $Q$, issued from $(x,y) \in \mathbb{R}^{2d}$, we have
    \begin{equation*}
        \lvert Q^k\phi(x) -Q^k\phi(y) \rvert = \lvert \mathbb{E}[\phi(X_k)] -\mathbb{E}[\phi(Y_k)] \rvert \leq \mathbb{E}[\lvert X_k - Y_k\rvert],
    \end{equation*}
    since $\phi$ is $1$-Lipschitz. Taking the infimum over all couplings, we get 
    \begin{equation*}
        \lvert Q^k\phi(x) -Q^k\phi(y) \rvert \leq \mathcal{W}_1(\delta_x Q^k , \delta_y Q^k) \leq M(1-\theta)^k  \mathcal{W}_1(\delta_x , \delta_y) = M(1-\theta)^k \lvert x-y\rvert.
    \end{equation*}
    Suppose $\phi$ is a $1$-Lipschitz function, whereas $\lambda > 0$ and $t \in \mathbb{R}$ are constants. We have 
    \begin{equation*}
        \mathbb{P}\left(\frac{1}{n} \sum_{k=0}^{n-1} \phi(X_k) \geq t \right) = \mathbb{P}\left(e^{\lambda\sum_{k=0}^{n-1} \phi(X_k)} \geq e^{\lambda n t} \right) \leq e^{-\lambda n t } \mathbb{E}\left[e^{\lambda\sum_{k=0}^{n-1} \phi(X_k)}\right].
    \end{equation*}
    By basic properties of the conditional expectation, we get 
    \begin{equation}
        \mathbb{E}\left[e^{\lambda\sum_{k=0}^{n-1} \phi(X_k)}\right] = \mathbb{E}\left[ \mathbb{E} \left[e^{\lambda\sum_{k=0}^{n-1} \phi(X_k)} | X_{n-2}\right]\right] = \mathbb{E}\left[e^{\lambda\sum_{k=0}^{n-2} \phi(X_k)} Q(e^{\lambda \phi})(X_{n-2})\right].
    \end{equation}
    Since, by assumption, $Q(x,\cdot)$ satisfies $T_1(C)$ for all $x \in \mathbb{R}^d$, we can apply \eqref{eq: Concentration_inequality} and obtain
    \begin{equation*}
         \mathbb{P}\left(\frac{1}{n} \sum_{k=0}^{n-1} \phi(X_k) \geq t \right) \leq e^{-\lambda n t +\frac{C\lambda^2}{2} }\mathbb{E}\left[e^{\lambda\sum_{k=0}^{n-2} \phi(X_k)} e^{\lambda Q\phi(X_{n-2})}\right].
    \end{equation*}
    Note that $e^{\lambda Q\phi(X_{n-2})}$ on the right hand side can be replaced by $e^{\lambda Q^{n-1}\phi(X_{0})}$. Now, repeating this argument for $X_{n-3}$,...,$X_1$, we obtain
    \begin{equation*}
         \mathbb{P}\left(\frac{1}{n} \sum_{k=0}^{n-1} \phi(X_k) \geq t \right) \leq e^{-\lambda n t +\frac{(n-1)C\lambda^2}{2} }\mathbb{E}\left[e^{\lambda\sum_{k=0}^{n-1} Q^k\phi(X_0)}\right] = e^{-\lambda n t +\frac{(n-1)C\lambda^2}{2} } \nu_0(e^{\lambda\sum_{k=0}^{n-1}Q^k \phi}).
    \end{equation*}
    Note that the function $\sum_{k=0}^{n-1}Q^k\phi$ is $\frac{M}{\theta}$-Lipschitz. Indeed, for any $x$, $y \in \mathbb{R}^d$, we have 
    \begin{equation*}
        \left \lvert \sum_{k=0}^{n-1} \left(Q^k\phi(x) - Q^k\phi(y) \right) \right \rvert \leq \sum_{k=0}^{n-1} M(1-\theta)^k\lvert x-y \rvert \leq \sum_{k=0}^{\infty}M(1-\theta)^k\lvert x-y \rvert = \frac{M}{\theta}\lvert x- y\rvert.
    \end{equation*}
    Finally, by using again inequality \eqref{eq: Concentration_inequality} for $\nu_0(e^{\lambda \sum_{k=0}^{n-1}Q^k \phi})$, we get 
    \begin{equation*}
         \mathbb{P}\left(\frac{1}{n} \sum_{k=0}^{n-1} \phi(X_k) \geq t \right) \leq \exp\left(-\lambda n t +\frac{(n-1)C\lambda^2}{2} + \frac{C_0 M^2 \lambda^2}{2\theta^2}\right) \exp\left(\lambda \sum_{k=0}^{n-1}\nu_0(Q^k\phi)\right).
    \end{equation*}
    Substituting $t$ by $u + \frac{1}{n} \sum_{k=0}^{n-1} \nu_0 (Q^k \phi)$ we obtain
    \begin{equation*}
         \mathbb{P}\left( \frac{1}{n}\sum_{k=0}^{n-1} \left( \phi(X_k)-\mathbb{E}\left [ \phi(X_k)\right]\right) \geq u \right) \leq \exp\left(-\lambda n u +\frac{(n-1)C\lambda^2}{2} + \frac{C_0 M^2 \lambda^2}{2\theta^2}\right).
    \end{equation*}
    Note that the above equation holds for any $\lambda>0$, and if we take the infimum over all $\lambda >0$ on the right hand side, we get \eqref{eq: Confidence_Interval}.
\end{proof}

In order to apply Proposition \ref{prop:concentration} in our setting, we need the following observations: 
 \begin{itemize}
 \item The $\mathcal W_1$ contraction \eqref{eq:W1contract} is implied by the $\mathcal W_2$ contraction with the same constants. Indeed, assuming the $\mathcal W_2$ contraction and using Jensen's inequality, for any $x$, $y \in \mathbb{R}^d$ we get $\mathcal{W}_1(\delta_x Q^k, \delta_y Q^k) \leq \mathcal{W}_2(\delta_x Q^k, \delta_y Q^k) \leq M(1-\theta)^k \mathcal{W}_2(\delta_x,\delta_y)$. Since $\mathcal{W}_2(\delta_x,\delta_y) = |x-y| = \mathcal{W}_1(\delta_x,\delta_y)$, a conditioning argument implies the $\mathcal W_1$ contraction for any initial conditions $\mu$ and $\nu$. In particular, in our case, due to Corollary \ref{cor: WContraction}, we have $\theta= h\delta$ for some $h>0$ uniformly in $\delta\in(0,\delta_0]$.
 \item The Markov transition kernel $Q$ is such that $Q(x,\cdot)$ is the Gaussian law with mean $x+\delta b(x)$ and with variance $\delta T I_d$. By \cite[Corollary 5.7.2]{BakryGentilLedoux},  it satisfies a log-Sobolev inequality with constant $C=\delta T$, which in turn implies $T_1(C)$ with the same $C$ thanks to \cite[Theorem 1]{otto2000generalization}.
 \end{itemize}
 We see that our estimates are consistent in terms of their dependence on $\delta$. Indeed, by taking $n = \lceil t/\delta\rceil$ for some $t>0$, we end up with 
 \[\mathbb{P}\po \frac1n\sum_{k=0}^{n-1} \po \phi(X_k) - \mathbb E(\phi(X_k))\pf    \geqslant u\pf  \leqslant   \exp \po - \frac{t u^2 h^2}{ 2(T+C_0 M^2/t)}\pf\,.\]
 Now, to obtain a non-asymptotic confidence interval for the target measure, we need to control the bias of the estimator. First, let $\pi_\infty$ be the unique invariant measure of $Q$. Using the fact that $\sum_{j=0}^{k-1} Q^j \phi$ is $M/\theta$-Lipschitz when $\phi$ is $1$-Lipschitz, we get
 \begin{equation}\label{eq: biasBound}
 \left|\frac1n\sum_{k=0}^{n-1} \mathbb E(\phi(X_k)) - \pi_\infty \phi\right| = \frac1n \left|(\nu_0-\pi_\infty) \sum_{k=0}^{n-1} Q^k\phi \right| \leqslant \frac{M}{n\theta} \mathcal W_1(\nu_0,\pi_\infty)\,.
 \end{equation}
 Next, one needs to take into account the discretisation error of $\pi_\infty$ with respect to the invariant measure of the continuous-time process. Following e.g.\ the case 1 of \cite[Lemma 10]{MonmarcheJournel}, we easily get that, for all initial conditions $\nu$,
 \[\mathcal W_1 \po \nu Q^n,\nu P_{n\delta}\pf \leqslant G(n\delta) \sqrt{\delta} \int_{\R^d}|x| \nu(\dd x)\,,\]
 where $(P_t)_{t\geqslant 0}$ is the semi-group associated to the continuous-time diffusion process and $G$ is some increasing function (that can be made explicit). Applying this with $\nu = \nu_\infty$ the invariant measure of the continuous diffusion and with $n_*=\lceil t/\delta\rceil$ where $t$ is chosen so that, from \eqref{eq:W1contract} (in our case, namely with $\theta= h\delta$)
 \[\mathcal W_1(\nu Q^{n_*},\mu Q^{n_*}) \leqslant \frac12 \mathcal W_1(\nu,\mu) \,,\]
 for all $\nu$, $\mu$, uniformly in $\delta\in (0,\delta_0]$, we get
 \begin{eqnarray*}
 \mathcal W_1(\pi_\infty,\nu_\infty) &\leqslant& \mathcal W_1(\pi_\infty Q^{n_*},\nu_\infty Q^{n_*}) + \mathcal W_1(\nu_\infty Q^{n_*},\nu_\infty P_{n_*\delta})   \\
 &\leqslant& \frac12  \mathcal W_1(\pi_\infty,\nu_\infty) + G(t+\delta_0) \sqrt{\delta} \int_{\R^d}|x| \nu_\infty(\dd x)\,,
 \end{eqnarray*}
 hence
 \begin{equation}\label{eq:discretisationError}
 \mathcal W_1(\pi_\infty,\nu_\infty) \leqslant 2 G(t+\delta_0) \sqrt{\delta} \int_{\R^d}|x| \nu_\infty(\dd x)\,.
 \end{equation}
 As a conclusion, combining the fact that $|\pi_\infty \phi - \nu_\infty \phi| \leq  \mathcal W_1(\pi_\infty,\nu_\infty)$ for any $1$-Lipschitz function $\phi$ with \eqref{eq:discretisationError}, and applying \eqref{eq: biasBound}, the bias can be bounded as
 \[\mathrm{bias} := \left|\frac1n\sum_{k=0}^{n-1} \mathbb E(\phi(X_k)) - \nu_\infty \phi\right| \leqslant \frac{c_1}{n} + c_2\sqrt{\delta}\,,\]
 for some explicit $c_1,c_2$ and we get a non-asymptotic confidence interval by bounding, for $u>0$, 
 \begin{eqnarray}
 \mathbb{P}\po \left|\frac1n\sum_{k=0}^{n-1} \phi(X_k) - \nu_\infty\phi \right|  \geqslant u + \mathrm{bias} \pf & \leqslant & \mathbb{P}\po \left|\frac1n\sum_{k=0}^{n-1} \po \phi(X_k) - \mathbb E(\phi(X_k))\pf    \right| \geqslant u\pf \nonumber\\
 &\leqslant & 2 \exp \po - \frac{t u^2h^2}{2(T + C_0 M^2/t)}\pf\,,\label{eq:boundCI}
 \end{eqnarray}
 where the factor $2$ comes from the fact we applied Proposition~\ref{prop:concentration} to both $\phi$ and $-\phi$.

 Now, if we want to have a confidence interval of length smaller than some $\varepsilon>0$ with a probability larger than some $\alpha<1$, first we choose $\delta$ small enough so that the discretization bias $c_2\sqrt{\delta}$ is smaller than $\varepsilon/3$, and then we choose $n$ large enough so that the long-time convergence bias $c_1/n$ is smaller than $\varepsilon/3$  and that the right-hand side of \eqref{eq:boundCI} with $u=\varepsilon/3$ is smaller than $\alpha$. In other words, the choice of the parameters $n,\delta$ is completely explicit in terms of $\varepsilon$ and $\alpha$.

\section{From \texorpdfstring{$L^2$}{L^2}-Wasserstein to entropy}\label{sec:EntropyRegularization}

	This section is concerned with entropy-cost inequalities in the spirit of  \cite{GuillinWang2011,rockner2010log}, which describe a small-time regularization property of the chain: if the initial condition has a finite second moment, then, for all positive times, it has a finite  KL-divergence with respect to the equilibrium distribution. Combining this with the $L^2$-Wasserstein contraction of Corollary~\ref{cor: WContraction} thus yields a non-asymptotic convergence bound in  KL-divergence.
 
 More specifically, \cite{GuillinWang2011,rockner2010log} are concerned with continuous-time diffusion processes, and we give an analogue of \cite[Corollary 1.2]{rockner2010log} for the Euler scheme \eqref{eq:chain}   (see also \cite{MonmarcheIdealized} for a similar discrete-time result for Hamiltonian Monte Carlo, and the related one-shot coupling of \cite{Bou-RabeeEberle2023}). In particular, consistent with the rest of our work, we will be interested in a result with a correct dependency in the step-size $\delta$, so that we recover \cite[Corollary 1.2]{rockner2010log} as $\delta$ vanishes. As we will see, to do this, we will need to consider $n$ transitions of \eqref{eq:chain} with $n$ of order $\delta^{-1}$. However, since it gives some indication on the general proof and does not require any conditions other than Assumption~\ref{hypo:1}, let us first briefly discuss the case $n=1$.

 In the rest of this section, $Q$ stands for the Markov transition operator of the chain \eqref{eq:chain}. In other words, for $x\in\R^d$, $\delta_x Q$ is an isotropic Gaussian distribution with mean $x+\delta b(x)$ and with variance $2T\delta$. Denoting by $f_x$ its density, we see that
 \begin{eqnarray*}
 \mathcal H(\delta_x Q\ |\ \delta_y Q) &=& \int_{\R^d} f_x(z) \log \frac{f_x(z)}{f_y(z)} dz\\
 &=& \frac{1}{2\delta T} \int_{\R^d} \left(|z-(y+\delta b(y)|^2-|z-(x+\delta b(x)|^ 2\right) f_x(z)\dd z    \\
 & = & \frac{1}{2\delta T} \left(x+\delta b(x) - y - \delta b(y)\right)^2 \,.
 \end{eqnarray*}
 Hence, under Assumption~\ref{hypo:1}, for any $x,y\in\R^d$,
 \begin{equation}
     \label{eq:Hn1local}
     \mathcal H(\delta_x Q\ |\ \delta_y Q) \leqslant \frac{(1+\delta L_b)^2}{2\delta T}|x-y|^2\,.
 \end{equation}
As we will see in the proof of Proposition~\ref{prop:entropy2} below, this implies that for all $\nu \in \mathcal{P}(\mathbb{R}^d)$, denoting by $\pi_\infty$ the invariant measure of $Q$, 
    \begin{equation}\label{eq:Hn=1}
        \mathcal{H}(\nu Q | \pi_\infty) \leq \frac{(1+\delta L_b)^2}{2\delta T} \mathcal{W}_2^2(\nu,\pi_\infty).
    \end{equation}
This result is already valuable to get a convergence rate in  KL-divergence (and hence in total variation thanks to Pinsker's inequality) in the setting of Corollary~\ref{cor: WContraction}, but it degenerates as $\delta$ vanishes (and gives an extra $\ln(\delta^{-1})$ term in the complexity bounds obtained that way). For this reason, in the following, we focus on obtaining  a bound similar to \eqref{eq:Hn1local} but with $Q^n$ for any $n$ instead of $n=1$.

To do this, we will need the following additional assumption on $b$.

\begin{assumption}\label{hypo:Lipschitz_gradient}
    We suppose that there exists $C>0$ such that
    \begin{equation}
         \lvert \nabla b(x) - \nabla b(y)\rvert \leq C\lvert x-y\rvert \quad \text{ for all } x,y \in \mathbb{R}^d.
    \end{equation}
\end{assumption}
Here, $\lvert \nabla b(x) - \nabla b(y) \rvert$ stands for the operator norm (associated to the Euclidean norm on $\R^d)$ of the difference between the Jacobian matrix of $b$ at points $x$ and $y$.

The main part of this section is the proof of the following result.

\begin{proposition}\label{prop: n_step_entropic_regularization}
    Let $Q$ be the transition kernel of the Markov chain \eqref{eq:chain}, and let $c>0$. Under Assumptions \ref{hypo:1} and \ref{hypo:Lipschitz_gradient}, and assuming moreover that
    \begin{equation}\label{eq: delta_entropic_regularization}
        0< \delta \leq \frac{1}{16cL_b^2\exp(2cL_b)}\,,
    \end{equation}
    then for all $n \geq 2$ with  $\delta n <c$ and  all $x, y \in \mathbb{R}^d$,
    \begin{equation}
        \mathcal{H}(\delta_y Q^n| \delta_x Q^n) \leq \left( \frac{1}{2 T} \left(c L_b^2 + \frac{1}{n \delta}\right) + \frac{1}{2} c^2 C^2d \exp(2cL_b)\right)\rvert x- y \rvert^2.
    \end{equation}
    
\end{proposition}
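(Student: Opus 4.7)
The plan is to adapt the coupling-by-change-of-measure argument of Röckner--Wang to the discrete Euler setting. On a single probability space I would construct, alongside the unperturbed chain $(X_k)$ started from $x$, a controlled chain $(\tilde Y_k)$ also started from $x$ whose marginal at step $n$ equals $\delta_y Q^n$, and then read off $\mathcal H(\delta_y Q^n|\delta_x Q^n)$ from the Girsanov-type entropy cost of that control. Concretely, letting $(Y_k)_{k=0}^n$ be the unperturbed chain \eqref{eq:chain} started at $y$ with Gaussian increments $(Z_k)$, and fixing any adapted $\mathbb{R}^d$-valued process $(D_k)$ with $D_0=x-y$ and $D_n=0$, I would set $\tilde Y_k:=Y_k+D_k$ and reuse the same $Z_k$'s. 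Then $\tilde Y_0=x$, $\tilde Y_n=Y_n$ almost surely, and $\tilde Y$ satisfies
\[\tilde Y_{k+1}=\tilde Y_k+\delta b(\tilde Y_k)+\delta u_k+\sqrt{2\delta T}\,Z_{k+1}, \qquad u_k:=\frac{D_k-D_{k+1}}{\delta}+b(Y_k)-b(\tilde Y_k).\]

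Since the one-step transition kernels of $\tilde Y$ and $X$ are Gaussians with the common covariance $2\delta T\,I_d$ whose means differ by $\delta u_k$, the chain rule for relative entropy gives $\mathcal H(\mathrm{Law}(\tilde Y_{0:n})\mid \mathrm{Law}(X_{0:n}))=\sum_{k=0}^{n-1}\mathbb E[\delta|u_k|^2/(4T)]$, and data processing applied to the projection onto the last coordinate yields $\mathcal H(\delta_y Q^n|\delta_x Q^n)\leq \sum_k \mathbb E[\delta |u_k|^2/(4T)]$. For the simplest deterministic choice $D_k=(1-k/n)(x-y)$, the driving part $(D_k-D_{k+1})/\delta=(x-y)/(n\delta)$ sums (with the prefactor $\delta/(4T)$) to $|x-y|^2/(2Tn\delta)$; the drift correction $b(Y_k)-b(\tilde Y_k)$ is bounded via Assumption~\ref{hypo:1} by $L_b|D_k|\leq L_b|x-y|$, which after summation and using $n\delta\leq c$ contributes at most $cL_b^2|x-y|^2/(2T)$. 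Using the elementary $|a+b|^2\leq 2|a|^2+2|b|^2$ to decouple the two pieces of $|u_k|^2$ reproduces precisely the first bracket $\tfrac{1}{2T}(cL_b^2+1/(n\delta))|x-y|^2$ in the stated estimate.

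The remaining term $\tfrac12 c^2 C^2 d \exp(2cL_b)|x-y|^2$ is where Assumption~\ref{hypo:Lipschitz_gradient} is needed. I would refine the above by Taylor-expanding $b(Y_k)-b(\tilde Y_k)=-\nabla b(\tilde Y_k)D_k+\rho_k$, where the remainder is controlled by $|\rho_k|\leq\tfrac{C}{2}|D_k|^2$, and then cancelling the first-order piece by letting $D_k$ flow along the random linearised Jacobian $J_k=\prod_{j<k}(I+\delta\nabla b(\tilde Y_j))$ rather than along the deterministic straight line. Under such a flow-aligned ansatz, the operator-norm bound $\|J_k\|\leq(1+\delta L_b)^k\leq e^{cL_b}$ produces the exponential factor $e^{2cL_b}$, and the factor $d$ comes from the Gaussian fluctuations that the stochastic $D_k$ inherits from the noise (via $\mathbb E|Z_k|^2=d$).

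The delicate step, and the main obstacle, is arranging this stochastic interpolation so that the $L^2$-cost of the second-order Taylor remainder scales as $|x-y|^2$ rather than the naive $|x-y|^4$: one has to pair the quadratic remainder $\rho_k$ with a linear noise-induced perturbation of $D_k$ so that only one power of $|x-y|$ survives in each factor. The smallness condition $\delta\leq(16cL_b^2 e^{2cL_b})^{-1}$ should appear exactly at this balancing point, absorbing the cross terms that arise when the flow-aligned driving $(D_k-D_{k+1})/\delta$, the Jacobian amplification, and the Hessian remainder are combined, and the minor requirement $n\geq 2$ is the technical minimum needed to set up a nontrivial interpolation between $D_0$ and $D_n$.
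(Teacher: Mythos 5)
Your first two paragraphs already constitute a correct proof --- in fact of a stronger statement than the proposition. With the deterministic straight-line shift $D_k=(1-k/n)(x-y)$, the process $\tilde Y_k=Y_k+D_k$ is a time-inhomogeneous Markov chain started at $x$ whose one-step kernels are Gaussian with covariance $2\delta T I_d$ and mean shifted by $\delta u_k$, where $u_k$ is a deterministic function of $\tilde Y_k$ and $k$; hence the chain rule gives exactly $\mathcal{H}\bigl(\operatorname{Law}(\tilde Y_{0:n})\,|\,\operatorname{Law}(X_{0:n})\bigr)=\sum_{k=0}^{n-1}\mathbb{E}\bigl[\delta|u_k|^2/(4T)\bigr]$, and projecting onto the last coordinate yields $\mathcal{H}(\delta_yQ^n|\delta_xQ^n)\le \frac{1}{2T}\bigl(n\delta L_b^2+\frac{1}{n\delta}\bigr)|x-y|^2\le \frac{1}{2T}\bigl(cL_b^2+\frac{1}{n\delta}\bigr)|x-y|^2$. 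Since the remaining term $\frac12 c^2C^2d\exp(2cL_b)|x-y|^2$ in the statement is non-negative, the proposition follows at once --- and your argument needs neither Assumption~\ref{hypo:Lipschitz_gradient}, nor the restriction \eqref{eq: delta_entropic_regularization}, nor $n\ge 2$. Your paragraphs three and four are therefore unnecessary, and as written they are not a proof: the ``main obstacle'' you describe is never resolved, and it is illusory, because the dimension-dependent term is not something your coupling must reproduce. (A caveat if you did pursue that route: once $D_k$ is genuinely random, $u_k$ is no longer a function of $\tilde Y_k$ alone, $\tilde Y$ is no longer Markov, and the exact chain-rule identity has to be replaced by an inequality obtained by conditioning on the path of $\tilde Y$.)

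For comparison, the paper argues on the noise space instead: it constructs a map $\Psi_x$ on $\mathbb{R}^{dn}$ so that the chain started at $x$ and driven by the perturbed noise $\Psi_x(\mathbf{G})$ ends exactly at $Y_n$, reduces the problem by data processing and a change of variables to $\mathcal{H}\bigl(\operatorname{Law}(\mathbf{G})\,|\,\operatorname{Law}(\Psi_x^{-1}(\mathbf{G}))\bigr)$, and invokes a lemma of Bou-Rabee and Eberle bounding this by $\mathbb{E}\bigl[\frac12|\Psi_x(\mathbf{G})-\mathbf{G}|^2+\|\nabla\Psi_x(\mathbf{G})-I_{dn}\|_F^2\bigr]$, valid when the operator norm of $\nabla\Psi_x(\mathbf{G})-I_{dn}$ is at most $1/2$. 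The displacement term gives the same first bracket as yours; the Jacobian/Frobenius term is precisely where Assumption~\ref{hypo:Lipschitz_gradient}, the factor $d$, the factor $\exp(2cL_b)$ and the condition \eqref{eq: delta_entropic_regularization} enter. So the second term in the statement is an artifact of that method; your Girsanov-type route is more elementary, holds under weaker hypotheses, and gives a cleaner bound.
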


\begin{proof}
    Let $x,y \in \mathbb{R}^d$. We fix an integer $n \geq 2$, and let $\textbf{G}:=(G_1,...,G_n)$ where $(G_i)_{i\in \llbracket 1, n\rrbracket}$ are i.i.d.\ with the standard normal distribution on $\mathbb{R}^d$. We consider two Markov chains with transition kernel $Q$, initialized from $x$ and $y$, respectively, and driven by the same noise $\textbf{G}$.
    \begin{align*}
        X_{k+1}&=X_k+\delta b(X_k)+ \sqrt{2\delta T}G_{k+1}, \quad  k =  0,\cdots, n-1\\
        Y_{k+1}&=Y_k+\delta b(Y_k)+ \sqrt{2\delta T}G_{k+1}, \quad  k = 0,\cdots, n-1
    \end{align*}
    We then consider the function $\Psi_x^n : \mathbb{R}^{dn} \to \mathbb{R}^{dn}$ such that $\Psi_x^n(\textbf{G}):=(\Psi^{1,n}_x(\textbf{G}),...,\Psi^{n,n}_x(\textbf{G})) =(X_1, \ldots, X_n)$. Moreover, we introduce $\Psi_x: \mathbb{R}^{dn} \to \mathbb{R}^{dn}$ given by $\Psi_x(\textbf{G}) = (W_1, \ldots, W_n) =: \textbf{W}$,
    where for all $k \in \llbracket 1, n \rrbracket$,
    \begin{equation}\label{eq: defW}
         W_k = G_k +\frac{1}{\sqrt{2\delta T}}\left(\delta(b(Y_{k-1})-b(\tilde{X}_{k-1}))+\frac{1}{n}(y-x)\right),
    \end{equation}
    with
    \begin{equation}\label{eq: defXtilde}
        \tilde{X}_{k+1}=\tilde{X}_k+\delta b(\tilde{X}_k)+ \sqrt{2\delta T}W_{k+1}
    \end{equation}
    defined for $k \in \llbracket 0, n-1 \rrbracket$. One can check that $\Psi_x^{n,n}(\Psi_x(\textbf{G})) = Y_n$ (by design, $\textbf{W}=\Psi_x(\textbf{G})$ is such that $\Psi_x^{k,n}(\textbf{W}) - Y_k = \Psi_x^{k-1,n}(\textbf{W}) - Y_{k-1} -\frac1n (x-y) = \dots = \frac{n-k}{n}(x-y)$). Moreover, we have $\Psi^{n,n}_x(\textbf{G})=X_n$, i.e., the last marginals of $\Psi^n_x \circ \Psi_x(\textbf{G})$ and $\Psi^n_x(\textbf{G})$ are $\delta_y Q^n$ and $\delta_x Q^n$, respectively. By the entropy decomposition we have
    \begin{equation}\label{eq: entropy_decomposition}
        \mathcal{H}(\delta_y Q^n | \delta_x Q^n) \leq \mathcal{H}\left( \operatorname{Law} \left( \Psi^n_x \circ \Psi_x(\textbf{G})\right)  | \operatorname{Law} \left( \Psi^n_x(\textbf{G}) \right) \right) \,.
    \end{equation}
    Indeed, here we use the fact that if measures $\mu$ and $\nu$ on $\mathbb{R}^d$ are absolutely continuous with respect to the Lebesgue measure, and have $k$-dimensional marginals (with $k < d$) denoted by $\mu_1$ and $\nu_1$, respectively, then $\mathcal{H}(\mu_1|\nu_1) \leq \mathcal{H}(\mu|\nu)$, which itself follows from the following classical  calculation. Denoting $x:=(x_*,x_{**})$ where $x_* \in \mathbb{R}^k$ and $x_{**} \in \mathbb{R}^{d-k}$,
    \begin{equation*}
    \begin{split}
    \mathcal{H}(\mu|\nu) 
    & = \mathcal{H}(\mu_1 | \nu_1) + \int_{\mathbb{R}^k} \mathcal{H}\left( \frac{\mu(x_*,\cdot)}{\mu_1(x_*)} \Bigg| \frac{\nu(x_*,\cdot)}{\nu_1(x_*)}\right) \mu_1(x_*) d x_* \,,
    \end{split}
    \end{equation*}
    and the last integral is non-negative since entropy is non-negative. 
    
    In order to deal with the right hand side of \eqref{eq: entropy_decomposition}, we will use explicit formulas for the densities of $\Psi_x^n \circ \Psi_x(\textbf{G})$ and $\Psi_x^n(\textbf{G})$. Denoting $\varphi(u) = (2\pi)^{-(dn)/2} \exp(-\lvert u \rvert^2/2)$, and using a change of variables, these densities are given by $f$ and $g$ defined, respectively, as
\begin{align*}
    &f(u) = \frac{1}{(2\pi)^{(dn)/2}} \lvert \det \nabla (\Psi^n_x \circ \Psi_x)^{-1}(u)\rvert e^{-\frac{\left\lvert (\Psi^n_x \circ \Psi_x)^{-1}(u) \right\rvert^2}{2}}\\
    &g(u) = \frac{1}{(2\pi)^{(dn)/2}} \lvert \det \nabla (\Psi^n_x)^{-1}(u)\rvert e^{-\frac{\left\lvert (\Psi^n_x)^{-1}(u) \right\rvert^2}{2}}.
\end{align*}
    With the above notation, we have
    \begin{align*}
        \mathcal{H}\left( \operatorname{Law} \left( \Psi^n_x \circ \Psi_x(\textbf{G})\right)  | \operatorname{Law} \left( \Psi^n_x(\textbf{G}) \right) \right) &= \int_{\mathbb{R}^{dn}} \log \left( \frac{f(u)}{g(u)}\right) f(u) du\\
        & = \mathbb{E}\left [ \log \left( \frac{f(\Psi^n_x\circ\Psi_x(\textbf{G}))}{g(\Psi^n_x\circ\Psi_x(\textbf{G}))}\right)\right]\\
        & = \int_{\mathbb{R}^{dn}} \log \left( \frac{f(\Psi^n_x\circ\Psi_x(u))}{g(\Psi^n_x\circ\Psi_x(u))}\right) \varphi(u) du\\
        & = \int_{\mathbb{R}^{dn}} \log \left( \frac{\varphi(u)}{\frac{\varphi(u) g(\Psi^n_x\circ\Psi_x(u))}{f(\Psi_x^n\circ\Psi_x(u))}}\right) \varphi(u) du \,,
    \end{align*}
    where the function $\frac{\varphi(u) g(\Psi^n_x\circ\Psi_x(u))}{f(\Psi_x^n\circ\Psi_x(u))}$ is the density of $\Psi^{-1}_x(\textbf{G})$.
    Hence we have 
    \begin{equation}\label{eq: entropy_to_entropy}
        \mathcal{H}\left( \operatorname{Law} \left( \Psi^n_x \circ \Psi_x(\textbf{G})\right)  | \operatorname{Law} \left( \Psi^n_x(\textbf{G}) \right) \right) =\mathcal{H}\left( \operatorname{Law} (\textbf{G}) | \operatorname{Law} \left( \Psi_x^{-1}(\textbf{G}) \right) \right).
    \end{equation}
It is proven in \cite[Lemma 15]{Bou-RabeeEberle2023} that, provided
\begin{equation}\label{eq:operator_norme_1/2}
\lvert\nabla\Psi_x(\textbf{G}) -I_{dn} \rvert \leq 1/2
\end{equation}
(recall that $|\cdot|$ stands for the operator norm here) we have
\begin{equation}\label{eq:regularization_Lemma15}
    \mathcal{H}(\textbf{G} | \Psi_x^{-1}(\textbf{G})) \leq \mathbb{E}\left[\frac{1}{2} \lvert \Psi_x(\textbf{G}) -\textbf{G}\rvert^2 + \lVert \nabla \Psi_x(\textbf{G}) - I_{dn}\rVert_F^2\right]\,,
\end{equation}
where $\|\cdot\|_F$ stands for the Fröbenius norm. 
In the following, we will first verify \eqref{eq:operator_norme_1/2}, and then provide bounds for the two terms on the right hand side of \eqref{eq:regularization_Lemma15}.

From the definition \eqref{eq: defW} of $W$, we have $\nabla_{G_j}W_k = \textbf{0}$ for all $1 \leq k < j \leq n$. We also have $\nabla_{G_j}W_k = I_d$ for all $1 \leq k = j \leq n$. Hence we have
\begin{equation}\label{eq: matix_in_Frob}
    \nabla \Psi(\textbf{G})-I_{dn} = \begin{pmatrix}
        \textbf{0} &\nabla_{G_1} W_2 &\cdots & \nabla_{G_1} W_n\\
        \textbf{0} & \ddots& \ddots & \vdots\\
        \vdots &  \ddots & \ddots & \nabla_{G_{n-1}} W_n\\
        \textbf{0}& \cdots & \textbf{0} &\textbf{0} 
    \end{pmatrix}
\end{equation}
where for all $0\leq j< k \leq n$ with $j < k-1$,
\begin{equation}\label{eq:gradient_W}
    \nabla_{G_j}W_k = \frac{\sqrt{\delta}}{ \sqrt{2T}} \left( \nabla b(Y_{k-1}) \nabla_{G_j} Y_{k-1}  - \nabla b (\tilde{X}_{k-1}) \nabla_{G_j} \tilde{X}_{k-1}\right).
\end{equation}
Moreover, for $j = k-1$
\begin{equation*}
    \nabla_{G_j}Y_k =\nabla_{G_j}\tilde{X}_k = \sqrt{2\delta T}I_d,
\end{equation*}
and hence, due to Assumption \ref{hypo:1},
\begin{equation*}
    \left\lvert \nabla_{G_{k-1}} W_{k} \right\rvert = \delta \left\lvert \nabla b (Y_{k-1}) - \nabla b(\tilde{X}_{k-1})\right\rvert \leq 2 \delta L_b.
\end{equation*}
For $j< k-1$, we have
\begin{align*}
    \nabla_{G_j} Y_{k-1} &=(I_d + \delta \nabla b (Y_{k-2}))\nabla_{G_j} Y_{k-2}\\
    &=(I_d + \delta \nabla b (Y_{k-2}))(I_d + \delta \nabla b (Y_{k-3}))\nabla_{G_j} Y_{k-3}\\
    &=\ldots =  A_{k-2} \cdots A_{j+1} A_j \nabla_{G_j} Y_j,
\end{align*}
with $A_r := I_d + \delta \nabla b(Y_r)$. Using $\lvert A_r\rvert \leq 1+ \delta L_b$ for all $r \geq 1$, and $\nabla_{G_j}Y_j = \sqrt{2\delta T}I_d$, we end up with
\begin{equation}\label{eq: nablaYbound}
    \left \lvert \nabla_{G_j} Y_{k-1}\right \rvert \leq \sqrt{2\delta T}(1+\delta L_b)^{k-1-j}.
\end{equation}
Observe that, by \eqref{eq: defW} and \eqref{eq: defXtilde}, $Y_i - \tilde{X}_i = \left(1-\frac{i}{n}\right)(y-x)$. Hence we have 
\begin{equation}\label{eq: equalGradients}
     \nabla_{G_j} \tilde{X}_{k-1} = \nabla_{G_j} Y_{k-1}.
\end{equation}
Using \eqref{eq: equalGradients} and plugging \eqref{eq: nablaYbound} into \eqref{eq:gradient_W}, we get
\begin{align*}
    \lvert \nabla_{G_j} W_k\rvert &\leq \frac{\sqrt{\delta}}{\sqrt{2T}} \left(  \left\lvert \nabla b(Y_{k-1})\right \rvert \left\lvert \nabla_{G_j}Y_{k-1}\right \rvert +\left\lvert \nabla b(\tilde{X}_{k-1})\right \rvert 
 \left\lvert \nabla_{G_j}\tilde{X}_{k-1}\right \rvert \right)\\
    &\leq \frac{\sqrt{\delta}}{\sqrt{2T}} 2L_b \left\lvert \nabla_{G_j}Y_{k-1}\right \rvert\\
    &\leq 2 \delta L_b (1+ \delta L_b)^{k-1-j}.
\end{align*}
We can now conclude with
\begin{equation*}
    \left \lvert \nabla \Psi(\textbf{G}) -I_{dn}\right \rvert^2 \leq \sup_{1\leq k \leq n} \sum_{j=1}^{k-1} \left\lvert \nabla_{G_j} W_k \right\rvert^2
    =\sum_{j=1}^{n-1} \left \lvert \nabla_{G_j}W_n\right \rvert^2
     \leq 4n\delta^2 L_b^2 (1+\delta L_b)^{2n}.
\end{equation*}
Using $\log(1+x) \leq x$ for all $x>0$, then using $0<n \delta<c$, we get
\begin{equation}
    \left \lvert \nabla \Psi(\textbf{G}) -I_{dn}\right \rvert^2 \leq 4 n \delta^2 L_b^2 \exp(2n\delta L_b) \leq 4c\delta L_b^2 \exp(2cL_b)\leq \frac{1}{4},
\end{equation}
since we have $\delta <1/(16cL_b^2\exp(2cL_b))$. Hence we arrive at \eqref{eq:operator_norme_1/2}.

As explained above, by \cite[Lemma 15]{Bou-RabeeEberle2023} this gives  \eqref{eq:regularization_Lemma15}, and we now focus on estimating $\lvert \Psi_x(\textbf{G}) -\textbf{G}\rvert$. We have
\begin{align*}
    \lvert W_k - G_k \rvert &= \frac{1}{\sqrt{2\delta T}}\left \rvert \delta (b(Y_{k-1}) - b(\tilde{X}_{k-1})) + \frac{1}{n}(y-x)\right \rvert\\
    & \leq \frac{1}{\sqrt{2\delta T}} \left(\delta L_b \left \lvert  Y_{k-1} - \tilde{X}_{k-1}\right \rvert +\frac{1}{n}\lvert y-x \rvert\right)\\
    & \leq \frac{1}{\sqrt{2 \delta T}} \left( \delta L_b + \frac{1}{n}\right) \lvert x-y \rvert,
\end{align*}
and hence
\begin{align*}
    \lvert \textbf{W} - \textbf{G} \rvert^2 &\leq 
    \frac{1}{2 \delta T} \sup_{1\leq k \leq n} \sum_{j=1}^k \left( \delta L_b + \frac{1}{n}\right)^2 \lvert x-y \rvert^2\\
    & \leq \frac{1}{\delta T} \left ( n\delta^2 L_b^2 + \frac{1}{n}\right) \lvert x-y\rvert^2.
\end{align*}
Thus we have 
\begin{equation}\label{eq:PsiGG2}
    \frac{1}{2} \lvert \Psi(\textbf{G}) - \textbf{G}\rvert^2 = \frac{1}{2} \lvert \textbf{W} - \textbf{G}\rvert^2 \leq \frac{1}{2 T}\left(n \delta L_b^2 + \frac{1}{n \delta} \right) \lvert x- y \rvert^2.
\end{equation}

Regarding the estimation of $\lVert \nabla \Psi_x(\textbf{G}) - I_{dn}\rVert_F$, using \eqref{eq: equalGradients} we have
\begin{align*}
    \left \lVert \nabla_{G_j} W_k\right \rVert_F &= \left \lVert \frac{\sqrt{\delta}}{\sqrt{2 T}} \left( \nabla b (Y_{k-1}) \nabla_{G_j}Y_{k-1} - \nabla b (\tilde{X}_{k-1}) \nabla_{G_j}\tilde{X}_{k-1}\right)\right \rVert_F \\
    &\leq \frac{\sqrt{\delta}}{\sqrt{2 T}} \left \lvert \nabla b (Y_{k-1}) - \nabla b (\tilde{X}_{k-1})\right \rvert \left \lVert \nabla_{G_j}Y_{k-1} \right\rVert_F\\
    & \leq \frac{\sqrt{\delta}}{\sqrt{2 T}} C\left \lvert Y_{k-1} - \tilde{X}_{k-1}\right \rvert \sqrt{2\delta T} (1+\delta L_b)^{k-1-j}\sqrt{d}\\
    & \leq C \sqrt{d} \delta(1+\delta L_b)^{k-1-j} \lvert x-y \rvert,
\end{align*}
where in the penultimate inequality we use the same argument as for obtaining \eqref{eq: nablaYbound}, together with  $\left \lVert \nabla_{G_{k-1}}Y_k \right \rVert_F \leq \sqrt{2\delta T d}$. We have 
\begin{align*}
    \left \lVert \nabla \Psi(\textbf{G})-I_{dn}\right \rVert_F^2 &= \sum_{1\leq j < k \leq n} \left \lVert  \nabla_{G_j}W_k \right \rVert_F^2\\
    & \leq \sum_{1\leq j < k \leq n} C^2 \delta^2 d (1+ \delta L_b)^{2(k-1-j)} \lvert x-y \rvert^2\\
    & \leq \frac{1}{2}n^2 C^2 \delta^2 d (1+\delta L_b)^{2n} \lvert x-y \rvert^2\\
    & \leq \frac{1}{2}c^2C^2 d\exp(2cL_b) \lvert x-y\rvert^2.
\end{align*}

Plugging this last bound and \eqref{eq:PsiGG2} in \eqref{eq:regularization_Lemma15}, concludes the proof.
\end{proof}

From the last result, the arguments of \cite{rockner2010log} yield the desired entropy-cost regularization inequality, as we now state.

\begin{proposition}\label{prop:entropy2}
Let Assumptions \ref{hypo:1} and \ref{hypo:Lipschitz_gradient} hold. Let $\pi_\infty$ denote the invariant measure of $Q$. Then, for any probability measure $\nu$ on $\R^d$, and for any $n\geq 2$,
\begin{equation*}
    \mathcal{H}(\nu Q^n | \pi_\infty) \leq \left( \frac{1}{2 T}\left(n\delta L_b^2 +\frac{1}{n \delta }\right) + \frac{1}{2} c^2C^2 d \exp(2cL_b)\right) \mathcal{W}_2^2 (\nu,\pi_\infty).
\end{equation*}
\end{proposition}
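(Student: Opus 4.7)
The plan is to apply Proposition \ref{prop: n_step_entropic_regularization} pointwise and then integrate against an optimal $\mathcal{W}_2$-coupling, using joint convexity of the relative entropy---exactly as already hinted at in the derivation of \eqref{eq:Hn=1} for the case $n=1$. Since $\pi_\infty$ is $Q$-invariant, $\pi_\infty Q^n = \pi_\infty$, so any coupling between $\nu$ and $\pi_\infty$ can be propagated through $Q^n$ on each marginal to produce a coupling of $\nu Q^n$ and $\pi_\infty$.

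Concretely, I would first fix an optimal $\mathcal{W}_2$-coupling $\pi \in \Pi(\nu, \pi_\infty)$ (if none exists because $\nu$ or $\pi_\infty$ has infinite second moment, the stated inequality is trivially true with $\mathcal{W}_2^2(\nu,\pi_\infty)=+\infty$). Then one writes
$$\nu Q^n = \int_{\R^d \times \R^d} \delta_y Q^n \, d\pi(y,x), \qquad \pi_\infty = \pi_\infty Q^n = \int_{\R^d \times \R^d} \delta_x Q^n \, d\pi(y,x),$$
so that joint convexity of $(\mu, \tau) \mapsto \mathcal{H}(\mu | \tau)$ yields
$$\mathcal{H}(\nu Q^n | \pi_\infty) \leq \int_{\R^d \times \R^d} \mathcal{H}(\delta_y Q^n | \delta_x Q^n) \, d\pi(y,x).$$
Inserting the pointwise bound from Proposition \ref{prop: n_step_entropic_regularization} and using $\int_{\R^d\times\R^d} |x-y|^2 \, d\pi(y,x) = \mathcal{W}_2^2(\nu, \pi_\infty)$ by optimality of $\pi$ then delivers the desired estimate.

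The only mild subtlety is bookkeeping of constants: Proposition \ref{prop: n_step_entropic_regularization} is stated with the coefficient $cL_b^2$, whereas the target inequality has the sharper $n\delta L_b^2$. Inspection of that proof shows that the bound \eqref{eq:PsiGG2} on $\tfrac12|\Psi_x(\mathbf{G})-\mathbf{G}|^2$ is in fact $\tfrac{1}{2T}(n\delta L_b^2 + (n\delta)^{-1})|x-y|^2$, and $n\delta L_b^2$ was only loosened to $cL_b^2$ (via $n\delta \leq c$) for a cleaner final display. So in the argument above I would invoke this sharper intermediate estimate directly, obtaining exactly the constant $\tfrac{1}{2T}(n\delta L_b^2 + (n\delta)^{-1}) + \tfrac{1}{2}c^2 C^2 d \exp(2c L_b)$ required by the statement. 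No genuine obstacle arises beyond this constant-tracking.
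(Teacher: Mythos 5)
Your proposal is correct, and it reaches the conclusion by a genuinely different (and somewhat more elementary) route than the paper. The paper follows the argument of \cite{rockner2010log}: starting from the variational/Harnack-type inequality $Q^n \log h(y) \leq \log (Q^n h(x)) + \mathcal{H}(\delta_y Q^n | \delta_x Q^n)$ (via \cite[Lemma 2.4]{ARNAUDON20093653}), choosing $h = d(\nu Q^n)/d\mu = (Q^n)^* h_0$, integrating over a coupling, and discarding a non-positive entropy term before taking the infimum over couplings. You instead write $\nu Q^n$ and $\pi_\infty = \pi_\infty Q^n$ as mixtures of $\delta_y Q^n$ and $\delta_x Q^n$ over an optimal $\mathcal{W}_2$-coupling and invoke joint convexity of $\mathcal{H}$ (equivalently, the chain-rule/data-processing argument: build the joint laws $\pi(dy,dx)\,\delta_y Q^n(dz)$ and $\pi(dy,dx)\,\delta_x Q^n(dz)$, note that marginalising onto $z$ decreases relative entropy, and disintegrate). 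This avoids the adjoint-operator manipulation and the bookkeeping of the discarded term, while using exactly the same key input, namely Proposition \ref{prop: n_step_entropic_regularization}, integrated against the coupling. Your handling of the constant is also right, and in fact matches what the paper itself does implicitly: Proposition \ref{prop: n_step_entropic_regularization} is displayed with $cL_b^2$, but its proof (via \eqref{eq:PsiGG2}) yields the sharper $n\delta L_b^2$, which is the constant appearing in the statement; like the paper, you should also carry along the standing restrictions $\delta \leq 1/(16cL_b^2 e^{2cL_b})$ and $n\delta < c$ under which that proposition holds, since they are implicit in the statement of Proposition \ref{prop:entropy2}. The one point worth spelling out if you write this up is the continuum version of joint convexity (the disintegration identity $\mathcal{H}(\bar\mu|\bar\tau) = \int \mathcal{H}(\delta_y Q^n|\delta_x Q^n)\,d\pi$ plus monotonicity under marginalisation), together with measurability of $(y,x)\mapsto \mathcal{H}(\delta_y Q^n|\delta_x Q^n)$, which is unproblematic here since these kernels are explicit pushforwards of Gaussians.
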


\begin{proof}
    Let $h: \mathbb{R}^d \to \mathbb{R}$ be positive and bounded, and let $x$, $y \in \mathbb{R}^d$. Denote by $f_{x,n}$ and $f_{y,n}$ the densities of $\delta_x Q^n$ and $\delta_y Q^n$, respectively. We have 
    \begin{align*}
        Q^n \log h (y) &= \int_{\mathbb{R}^d} \log h(u) f_{y,n}(u)du\\
        &= \int_{\mathbb{R}^d} \log h(u) \frac{f_{y,n}(u)}{f_{x,n}(u)}f_{x,n}(u)du\\
        & \leq \log \int_{\mathbb{R}^d}h(u)f_{x,n}(u)du + \int_{\mathbb{R}^d}\log \left(\frac{f_{y,n}(u)}{f_{x,n}(u)} \right) f_{y,n}(u) du\\
        & = \log (Q^n h(x)) + \mathcal{H}(\delta_y Q^n | \delta_x Q^n),
    \end{align*}
    where the inequality above follows from \cite[Lemma 2.4]{ARNAUDON20093653}. Let $\mu$, $\nu \in \mathcal{P}(\mathbb{R}^d)$. Applying the previous inequality with $h = d(\nu Q^n)/d\mu$, i.e., $h=(Q^n)^* h_0$ with $h_0 = d \nu/ d\mu$, and combining this with Proposition \ref{prop: n_step_entropic_regularization} to bound the last term, we obtain
    \begin{equation*}
    \begin{split}
        Q^n (\log ((Q^n)^* h_0))(y) &\leq \log (Q^n (Q^n)^* h_0(x)) \\
        &+\left( \frac{1}{2 T}\left(n\delta L_b^2 +\frac{1}{n \delta }\right) + \frac{1}{2} c^2C^2 d \exp(2cL_b)\right)\lvert x-y \rvert^2.
    \end{split}    
    \end{equation*}
    Integrating this with respect to a coupling $\gamma$ of $\mu$ and $\nu$, we get
    \begin{equation*}
    \begin{split}
        \mathcal{H}(\nu Q^n | \mu) &\leq  \int_{\mathbb{R}^d} \log (Q^n (Q^n)^* h_0) d\mu \\
        &+\left( \frac{1}{2 T}\left(n\delta L_b^2 +\frac{1}{n \delta }\right) + \frac{1}{2} c^2C^2 d \exp(2cL_b)\right) \int_{\mathbb{R}^d} \lvert x-y \rvert^2 d\gamma(x,y),
    \end{split}
    \end{equation*}
    where the first term on the right hand side is $-\mathcal{H}(\mu|\nu Q^n)$, which is non-positive. Hence we have 
    \begin{equation*}
        \mathcal{H}(\nu Q^n | \mu) \leq \left( \frac{1}{2 T}\left(n\delta L_b^2 +\frac{1}{n \delta }\right) + \frac{1}{2} c^2C^2 d \exp(2cL_b)\right) \int_{\mathbb{R}^d} \lvert x-y \rvert^2 d\gamma(x,y).
    \end{equation*}
    Taking the infimum over all couplings concludes the proof.
\end{proof}

\bibliographystyle{plain}
\bibliography{biblio}
\end{document}